\numberwithin{equation}{section}
\newcommand{\R}{\mathbb R}
\newcommand{\C}{\mathbb C}
\newcommand{\N}{\mathbb N}
\newcommand{\Z}{\mathbb Z}
\newcommand{\T}{\mathbb T}
\newcommand{\cI}{\mathbb I}
\newcommand{\ii}{\int\!\!\!\int }
\newcommand{\be}{\begin{equation}}
\newcommand{\ee}{\end{equation}}
\newcommand{\ba}{\begin{eqnarray}}
\newcommand{\ea}{\end{eqnarray}}
\def\cf{\rightharpoonup}
\newtheorem{theorem}{Theorem}[section]
\newtheorem{proposition}[theorem]{Proposition}
\newtheorem{remark}[theorem]{Remark}
\newtheorem{lemma}[theorem]{Lemma}
\newtheorem{corollary}[theorem]{Corollary}
\begin{document}

\title[UCP and control for BBM]{Unique continuation property and control for the Benjamin-Bona-Mahony equation on the torus}

\author{Lionel Rosier}
\address{Institut Elie Cartan, UMR 7502 UHP/CNRS/INRIA,
B.P. 70239, 54506 Vand\oe uvre-l\`es-Nancy Cedex, France}
\email{rosier@iecn.u-nancy.fr}

\author{Bing-Yu Zhang}
\address{Department  of Mathematical Sciences, University of Cincinnati, Cincinnati, Ohio 45221, USA}
\email{bzhang@math.uc.edu}

\keywords{Unique Continuation Property; Benjamin-Bona-Mahony equation; Korteweg-de Vries equation;
Moving point control; Exact controllability; Stabilization}

\subjclass{}

\begin{abstract}
We consider the Benjamin-Bona-Mahony (BBM) equation on the one
dimensional torus $\T =\R/(2\pi \Z )$. We prove a Unique
Continuation Property (UCP) for small data in $H^1(\T )$ with
nonnegative zero means. Next we extend the UCP to certain BBM-like
equations, including the equal width wave equation and the KdV-BBM
equation. Applications to the stabilization of the above equations
are given. In particular, we show that when  an internal  control
acting on a moving interval is applied in BBM equation,  then a
semiglobal exponential stabilization can be derived in $H^s(\T )$ for any $s\ge 1$. Furthermore, we
prove that the BBM equation with a moving control is also locally
exactly controllable in $H^s(\T)$ for any $s \ge 0$ and
globally exactly controllable in $H^s(\T)$ for any $s\geq 1$.
\end{abstract}

\maketitle
\section{Introduction}
We are concerned here with the Benjamin-Bona-Mahony  (BBM) equation
\be
u_t-u_{txx} + u_x + uu_x=0
\label{A1}
\ee
that was proposed in \cite{BBM} as an alternative to the Korteweg-de Vries (KdV) equation
\be
u_t+u_{xxx} + u_x + uu_x=0
\label{A2}
\ee
as a model for the propagation of one-dimensional, unidirectional small amplitude
long waves in nonlinear dispersive media. In the context of shallow-water waves, $u=u(x,t)$ represents the displacement of the water surface at location $x$ and time $t$.
In this paper, we shall assume that $x\in \R$ or $x\in \T=\R/(2\pi\Z)$ (the one-dimensional torus).
\eqref{A1} is often obtained from \eqref{A2} in the derivation
of the surface equation by noticing that, in the considered regime, $u_x\sim -u_t$, so that $u_{xxx}\sim -u_{txx}$.  The dispersive term $-u_{txx}$ has a strong smoothing effect,
thanks to which the wellposedness theory of \eqref{A1} is dramatically easier than for \eqref{A2} (see \cite{BBM,BT,Roumegoux} and the references therein).
Numerics often involve the BBM equation, or the KdV-BBM equation (see below), because of the regularization provided by the term $-u_{txx}$.
On the other hand, \eqref{A1} is not integrable and it has only three invariants of motion \cite{HESG,Olver}.

In this paper, we investigate the Unique Continuation Property (UCP) of BBM and its applications to the Control Theory for  \eqref{A1}.
We say that the UCP holds in some class $X$ of functions if, given any nonempty open set $\omega \subset \T$,   the only solution $u\in X$ of \eqref{A1} fulfilling
\[
u(x,t)=0 \qquad \text{ for }(x,t)\in \omega\times (0,T),
\]
is the trivial one $u\equiv 0$. Such a property is very important in Control Theory, as it is equivalent to the approximate controllability for linear PDE, and it is involved in
the classical uniqueness/compactness approach in the proof of the stability for a PDE with a localized damping.
The UCP is usually proved with the aid of some Carleman estimate (see e.g. \cite{SS}).
The UCP for KdV was  established in \cite{Zhang92} by the inverse scattering approach, in \cite{EKPV,RZ2006,SS} by means of Carleman estimates, and in \cite{Bourgain}
by a perturbative approach and Fourier analysis. For BBM, the study of the UCP is only at its early age. The main reason is that both $x=const$ and $t=const$ are characteristic lines for \eqref{A1}.
Thus, the Cauchy problem in the UCP (assuming e.g. that $u=0$ for $x\le 0$, and solving BBM for $x\ge 0$) is characteristic, which prevents from applying Holmgren's theorem, even for
the linearized equation. The Carleman approach for the UCP of BBM was developed in \cite{DPM} and in \cite{Yamamoto}. Unfortunately, Theorems
3.1-3.4 in \cite{DPM} are not correct without further assumptions, as noticed in \cite{ZZ}. On the other hand, the UCP in \cite{Yamamoto} for the BBM-like equation
\[
u_x-u_{txx}=p(x,t)u_x+q(x,t)u,\qquad x\in (0,1),\ t\in (0,T),
\]
where $p\in L^\infty (0,T;L^\infty (0,1))$ and $q\in L^\infty (0,T;L^2(0,1))$, requires $u(1,t)=u_x(1,t)=0$ for $t\in (0,T)$ and
\be
\label{CI}
u(x,0)=0\qquad \text{ for } x\in (0,1).
\ee
(Note, however, that nothing is required for $u(0,t)$.)
Because of \eqref{CI}, such a UCP cannot be used for the stabilization problem. More can be said for a linearized BBM equation with potential functions depending only on $x$.
It was proved in \cite{Micu} that the only solution $u\in C([0,T],H^1(0,1))$ of the linearized BBM equation
\ba
&& u_t-u_{txx}+u_x=0,\qquad x\in (0,1),\  t\in (0,T), \label{micu1}\\
&& u(0,t)=u(1,t)=0, \qquad t\in (0,T)\label{micu2}
\ea
fulfilling
$u_x(1,t)=0$ for all  $t\in (0,T)$
is the trivial one $u\equiv 0$. It is worth noticing that the proof of that result strongly used  the fact that the solutions of \eqref{micu1}-\eqref{micu2} are {\em analytic in time}.
On the other hand, several difficult UCP results based on spectral analysis are given in \cite{ZZ} for the system
\ba
&&u_t-u_{txx}=[\alpha (x)u]_x+\beta (x)u,\qquad x\in (0,1),\ t\in (0,T), \label{ZZ1}\\
&&u(0,t)=u(1,t)=0,\qquad t\in (0,T).\label{ZZ2}
\ea
As noticed in \cite{ZZ}, the UCP fails for \eqref{ZZ1}-\eqref{ZZ2} whenever both $\alpha$ and $\beta$ vanish on some open set $\omega \subset \T$, so that
the UCP  depends not only on the regularity of the functions $\alpha$ and $\beta$, but also on their zero sets.
Bourgain's approach \cite{Bourgain}  for the UCP of KdV (or NLS) is based on the fact that the Fourier transform of a compactly supported function extends to an entire function of  exponential type.
The proof of the UCP in \cite{Bourgain}  rests on estimates at high frequencies using the intuitive property  that the nonlinear term in Duhamel formula is perturbative.
As noticed in \cite{Mammeri}, that argument does
not seem to be applicable to BBM. Actually, if we follow Bourgain's idea for the linearized BBM equation
\be
\label{A4}
u_t - u_{txx}  + u_x=0
\ee
on $\R$, and assume that some solution $u$ vanishes for $|x|>L$ and $t\in (0,T)$, then its Fourier transform in $x$, denoted by $\hat u(\xi,t)$, is readily found to be
\[
\hat u(\xi, t) =\exp (\frac{-it\xi}{\xi ^2+1})\hat u(\xi, 0),\qquad \xi\in\R,\ t\in (0,T).
\]
The consideration of high frequencies is useless here.
By analytic continuation, the above equation still holds for all $\xi=\xi _1+i\xi _2\in \C \setminus \{ \pm i\} $. 
Picking any $t>0$, $\xi_1=0$ and letting $\xi_2\to 1^-$, we readily infer that $\partial _\xi ^n \hat u(i,0)=0$ for
 all $n\ge 0$, so that $\hat u(.,0)\equiv 0$ and hence $u\equiv 0$. Note that
 \be
 \partial _\xi ^n \hat u(i,t)=\int_{-\infty}^\infty u(x,t)(-ix)^n e^xdx,
 \ee
 and that it can be shown by induction on $n$ that all the moments $M_n(t)=\int_{-\infty}^\infty u(x,t)x^ne^xdx$ vanish on $(0,T)$, so that $u\equiv 0$.  Unfortunately, we cannot  modify
 the above argument to deal with the UCP for the full BBM equation, as the nonlinear term has no reason to be perturbative at the ``small''  frequencies $\xi =\pm i$.
 We point out that  a moment approach, inspired by \cite{Constantin}, was nevertheless applied in \cite{Mammeri} to prove the UCP for some KP-BBM-II equation.

In this paper, we shall apply the moment approach to prove the UCP for a generalized BBM equation
\[
u_t - u_{txx} +[f(u)]_x=0,
\]
where $f:\R\to\R $ is smooth and {\em nonnegative}. The choice $f(u)=u^2/2$ gives the so-called
Morrison-Meiss-Carey (MMC) equation (also called {\em equal width wave equation}, see \cite{HESG,MMC}).
Incorporating a localized damping in the above equation, we obtain the equation
\[
u_t-u_{txx}+[f(u)]_x+a(x)u=0, \qquad x\in\T,
\]
whose solutions are proved to tend weakly to $0$ in $H^1(\T)$ as $t\to\infty$. Note that similar results were proved in \cite{LV}
with a boundary dissipation.

Bourgain's approach, in its complex analytic original form, can be used to derive the UCP for the following BBM-like equation
\[
u_t-u_{txx}+u_x+(u*u)_x=0
\]
in which the (nonlocal) term $(u*u)_x$ is substituted to the classical nonlinear term $uu_x$ in BBM.

For the original BBM equation   \eqref{A1}, we shall derive a UCP for solutions issuing from initial data that are small enough in $H^1(\T)$ and with nonnegative
mean values. The proof, which is very reminiscent of La Salle invariance principle,  will combine the analyticity in time of solutions of BBM, the existence of three invariants of motion, and the use of
some appropriate Lyapunov function.

The second part of this work is concerned with the control of the BBM equation.
Consider first the linearized BBM equation with a control force
\be
\label{C30}
u_t -u_{txx} +u_x = a(x)h(x,t),
\ee
where $a$ is supported in some subset of $\T$ and $h$ stands for the control input.
It was proved in \cite{Micu,ZZ} that \eqref{C30}
is  {\em approximatively controllable} in $H^1(\T)$. It turns out that \eqref{C30}  is {\em  not exactly controllable} in $H^1(\T )$ \cite{Micu}.
This is in sharp contrast with the good control properties of other dispersive equations (on periodic domains, see e.g. \cite{LRZ,RussellZhang96} for KdV,
\cite{DGL,Laurent1,Laurent2,RZ2007b,RZ2009}  for the nonlinear Schr\"odinger equation, \cite{LO,LR} for the Benjamin-Ono equation,
\cite{MORZ} for Boussinesq system, and \cite{Glass} for Camassa-Holm equation).
The bad control properties of \eqref{C30} come from the existence of a limit point in the spectrum. Such a phenomenon was noticed
in \cite{Russell} for the beam equation with internal damping, in \cite{LS} for the plate equation with internal
damping, in \cite{Micu} for the linearized BBM equation, and more recently in \cite{RR} for the wave equation with structural damping.

It is by now classical that an ``intermediate'' equation between \eqref{A1} and \eqref{A2} can be derived from \eqref{A1} by working in a moving frame
$x=-ct$ with $c\in \R\setminus \{ 0\} $. Indeed, letting
\be
\label{A5}
v(x,t)=u(x-ct,t)
\ee
we readily see that \eqref{A1} is transformed into the following KdV-BBM equation
\be
\label{A6}
v_t+(c+1)v_x-cv_{xxx}-v_{txx}+vv_x=0.
\ee
It is then reasonable to expect the control properties of \eqref{A6} to be better than those of \eqref{A1}, thanks to the KdV term $-cv_{xxx}$ in \eqref{A6}.
We shall prove that the equation \eqref{A6} with a forcing term $a(x)k(x,t)$ supported in (any given) subdomain is locally exactly controllable in $H^1(\T)$
in time $T>(2\pi)/|c|$. Going back to the original variables, it means that the equation
\be
\label{A7}
u_t + u_x -u_{txx} + uu_x = a(x+ct) h(x,t)
\ee
with a moving distributed control is exactly controllable in $H^1(\T )$ in (sufficiently) large time. Actually, the control time is chosen in such a way that
the support of the control, which is moving at the constant velocity $c$,  can visit all the domain $\T$. Using the same idea, it has been proved recently in \cite{MRR}
that the wave equation with structural damping is null controllable in large time when controlled with a moving distributed control.

The concept of  moving point control was introduced by J. L. Lions in \cite{Lions} for the wave equation. One important motivation for this kind of control is that the exact controllability
of the wave equation with a pointwise control and Dirichlet boundary conditions fails if the point is a zero of some eigenfunction of the Dirichlet Laplacian, while it holds
 when the point is moving under some conditions easy to check
(see e.g. \cite{Castro}).
The controllability of the wave equation (resp. of the heat equation) with a moving point control was investigated in \cite{Castro,Khapalov1,Lions} (resp. in \cite{CZ1,Khapalov2}).

Thus, the appearance of the KdV term $-cv_{xxx}$ in \eqref{A6} results in much better control properties.
We shall see that
\begin{enumerate}
\item[(i)] there is no limit point in the spectrum of the linearized KdV-BBM equation, which is of ``hyperbolic'' type;
\item[(ii)]  a UCP for the full KdV-BBM equation can be derived from Carleman estimates for a system of coupled elliptic-hyperbolic equations.
\end{enumerate}
It follows that  one can expect a semiglobal exponential stability when applying a localized damping with a moving support.
We will see that this is indeed the case. Combining the local exact controllability  to the semiglobal exponential stability result, we obtain the following theorem which is the main
result of the paper.
\begin{theorem}
\label{globalcontrollability} Assume  given $a\in C^\infty(\T )$ with
$a\ne 0$ and  $c\in \R \setminus \{ 0 \} $.  Let $s\geq 1$ and  $R >0$ be given.  Then there exists a 
time  $T=T(s,R) > 2\pi / |c|$ such that for any
$u_0,u_T\in H^s(\T)$ with 
\be  
||u_0||_{H^s}\le R, \qquad ||u_T||_{H^s} \le R, 
\ee 
there exists a control $h\in L^2(0,T;H^{s-2}(\T ))$ 
such that the solution $u\in C([0,T];H^s(\T))$ of
\begin{eqnarray*}
&&u_t-u_{txx}+ u_x+uu_x=a(x+ct)h(x,t),\quad x\in \T,\  t\in (0,T)\\
&&u(x,0)=u_0(x),\quad x\in\T
\end{eqnarray*}
satisfies
\[
u(x,T)=u_T(x),\quad x\in \T.
\]
\end{theorem}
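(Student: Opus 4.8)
\emph{Overall scheme.} The plan is to derive the global exact controllability by the now-standard combination of a semiglobal stabilization result and a local exact controllability result, the two being glued together using the time-reversibility of the equation. I would rely on three facts established (or announced) earlier in the paper: (a) the BBM equation with the moving control $a(x+ct)h$ is locally exactly controllable near $0$ in $H^s(\T)$ for $s\ge 0$ in any time $T_0>2\pi/|c|$, so that there is $\varepsilon=\varepsilon(s,T_0)>0$ for which any two data of $H^s(\T)$-norm $<\varepsilon$ can be joined by a control in time $T_0$; (b) there is a stabilizing feedback turning the moving-control BBM equation into a closed-loop system enjoying a bound $\|u(t)\|_{H^s}\le C(R)e^{-\gamma(R)t}\|u_0\|_{H^s}$ whenever $\|u_0\|_{H^s}\le R$ (semiglobal exponential stability, $s\ge 1$); and (c) in the moving frame $w(x,t)=u(x-ct,t)$, which turns the moving-control BBM equation into the KdV-BBM equation \eqref{A6} with a control $a(x)k$ now fixed in space, the equation is invariant under $(x,t)\mapsto(-x,-t)$ (with $a(x)$ replaced by $a(-x)$), while the $H^s(\T)$-norm is translation invariant.

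\emph{Step 1: steer $u_0$ into a small ball.} Fix $T_0>2\pi/|c|$ and the associated $\varepsilon>0$ from (a). Using the feedback of (b) on a sufficiently long interval $[0,T_1]$, where $T_1=T_1(s,R)$ is read off from the decay estimate so that $\|u(T_1)\|_{H^s}<\varepsilon$, I would obtain a trajectory $u\in C([0,T_1];H^s(\T))$ with $u(\cdot,0)=u_0$ and a control $h$ on $[0,T_1]$ which, the feedback being a bounded (indeed regularizing) operator on $H^s(\T)$, belongs to $L^2(0,T_1;H^{s-2}(\T))$. Put $p:=u(\cdot,T_1)$, so $\|p\|_{H^s}<\varepsilon$.

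\emph{Step 2: steer a small state onto $u_T$, then bridge.} By (c), the reversal $\tilde w(x,t)=w(-x,T_2-t)$ conjugates the moving-control BBM problem on $[0,T_2]$ to another problem of exactly the same type (with $a$ replaced by a fixed nonzero smooth profile independent of $T_2$, which is why one reverses in the KdV-BBM frame). Applying Step 1 to this problem with initial datum $u_T(-\cdot)$, of $H^s(\T)$-norm $\le R$, and undoing the reversal, I get $T_2=T_2(s,R)$, a control $h_3$ on $[T-T_2,T]$ with $T:=T_1+T_0+T_2$, and a trajectory $u\in C([T-T_2,T];H^s(\T))$ of the moving-control BBM equation with $u(\cdot,T)=u_T$ and $\|q\|_{H^s}<\varepsilon$, where $q:=u(\cdot,T-T_2)$. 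Since the middle interval $[T_1,T-T_2]$ has length exactly $T_0>2\pi/|c|$ and $\|p\|_{H^s},\|q\|_{H^s}<\varepsilon$, fact (a) furnishes a control $h_2\in L^2(T_1,T-T_2;H^{s-2}(\T))$ steering $p$ to $q$ in time $T_0$ along a trajectory in $C([T_1,T-T_2];H^s(\T))$. Concatenating $h$, $h_2$, $h_3$ yields a control $h\in L^2(0,T;H^{s-2}(\T))$, and by uniqueness for the BBM Cauchy problem the three pieces glue into a single $u\in C([0,T];H^s(\T))$ with $u(\cdot,0)=u_0$ and $u(\cdot,T)=u_T$; moreover $T=T(s,R)>2\pi/|c|$, as required.

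\emph{Main obstacle.} All the substance sits in facts (a) and (b), which are the two hard theorems of the paper: the local exact controllability ultimately rests on the exact controllability of the linearized KdV-BBM equation (which, unlike linearized BBM, has no spectral accumulation point) combined with a fixed-point argument, while the semiglobal exponential stabilization rests on the UCP for the full KdV-BBM equation --- itself obtained from Carleman estimates for a coupled elliptic-hyperbolic system --- together with a compactness-uniqueness argument. Granting these, the remaining care in the assembly is purely bookkeeping: ensuring that, once $\varepsilon$ is pinned down by $T_0$, the times $T_1$ and $T_2$ depend only on $(s,R)$; checking that the reversal in (c) genuinely maps the family of moving-control BBM equations into itself without making the reversed equation depend on $T_2$ (hence the passage through the KdV-BBM frame); and verifying that the merely piecewise-defined concatenated control still lies in $L^2(0,T;H^{s-2}(\T))$ and that the glued function is a bona fide $C([0,T];H^s(\T))$ solution, which follows from the smoothing $u_t=(1-\partial_x^2)^{-1}(\cdots)$ inherent in BBM and the attendant well-posedness theory.
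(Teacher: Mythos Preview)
Your proposal is correct and follows precisely the route the paper indicates (but does not spell out in detail): the introduction announces that Theorem~\ref{globalcontrollability} is obtained by ``combining the local exact controllability to the semiglobal exponential stability result,'' and Section~\ref{sec:control} supplies exactly the two ingredients you invoke --- Theorem~6.1 for (a) and Theorems~6.7/6.9 for (b) --- both stated for the KdV-BBM equation, i.e.\ in the moving frame. Your observation in (c), that the time reversal should be performed in the KdV-BBM frame so that the reflected damping profile $a(-\cdot)$ does not depend on the interval length, is the right bookkeeping detail; one minor correction is that the feedback $h=-(1-\partial_x^2)(au)$ loses two derivatives rather than being ``regularizing,'' but this is exactly what is needed to land in $L^2(0,T;H^{s-2}(\T))$.
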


The paper is scheduled as follows. In Section \ref{sec:GWP} we recall some useful facts (global well-posedness, invariants of motion, time analyticity) about BBM. In Section \ref{sec:UCP} we establish
the UCP for BBM. In Section \ref{sec:UCP2}  we prove the UCP for other BBM-like equations, including the MMC equation
and the BBM equation with a nonlocal term. Section \ref{sec:UCPKdVBBM} is concerned with the UCP
for the KdV-BBM equation. The KdV-BBM equation is first split into a coupled system of an elliptic equation and a transport equation. Next, we prove some Carleman estimates with the same singular weights
for both the elliptic and the hyperbolic equations, and we derive the UCP for KdV-BBM by combining these Carleman estimates with a regularization process. Those results are used in Section
\ref{sec:control} to prove the exact controllability of KdV-BBM and the semiglobal exponential stability of the same equation with a localized damping term.

\section{Wellposedness, analyticity in time and invariants of motion}
\label{sec:GWP}
Throughout  the paper, for any $s\ge 0$, $H^s(\T)$ denotes the Sobolev space
\[
H^s(\T )=\{ u:\T \to \R; \ ||u||_{H^s}:=||(1-\partial _x ^2)^{\frac{s}{2}} u||_{L^2(\T )}  <\infty \}. 
\]
Its dual is denoted $H^{-s}(\T)$.
 
Let us consider the initial value problem (IVP)
\ba
&&u_t-u_{txx}+ u_x+uu_x=0,\quad x\in \T,\  t\in \R \label{B1}\\
&&u(x,0)=u_0(x). \label{B2}
\ea
Let $A = -(1-\partial _x^2 )^{-1}\partial _x \in {\mathcal L} (H^s(\T ), H^{s+1}(\T ))$ (for any $s\in \R$) and $W(t)=e^{tA}$ for $t\in \R$.
We put \eqref{B1}-\eqref{B2} in its integral form
\be
\label{B3}
u(t) = W(t) u_0  + \int_0^t W(t-s) A (u^2/2)(s)ds.
\ee
For $s\ge 0$ and $T>0$, let
\[
X_T^s=C([-T,T];H^s(\T )).
\]
Note that for $u\in X_T^s$, $u$ solves \eqref{B1} in ${\mathcal D}'(-T,T;H^{s-2}(\T))$ and \eqref{B2} if, and only if, it fulfills \eqref{B3} for all  $t\in [-T,T]$.
The following result will be used thereafter.
\begin{theorem} (\cite{BT,Roumegoux})
\label{thmA}
Let $s\ge 0$, $u_0\in H^s(\T )$ and $T>0$. Then there exists a unique solution $u\in X_T^s$ of \eqref{B1}-\eqref{B2}
(or, alternatively, \eqref{B3}).  Furthermore, for any $R>0$, the map $u_0\mapsto u$  is real analytic from $B_R (H^s(\T ))$ into $X_T^s$.
\end{theorem}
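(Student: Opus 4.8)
The plan is to read \eqref{B3} as a semilinear evolution equation whose linear part $W(t)=e^{tA}$ is generated by the \emph{bounded} operator $A\in{\mathcal L}(H^s(\T),H^{s+1}(\T))\subset{\mathcal L}(H^s(\T))$, so that the whole problem behaves like an ordinary differential equation in the Banach space $H^s(\T)$. A preliminary observation, used throughout, is that $A$ acts on the $k$-th Fourier mode by the purely imaginary multiplier $-ik/(1+k^2)$; hence $W(t)$ acts by the unimodular multiplier $\exp(-itk/(1+k^2))$, and $W(t)$ is a \emph{unitary group} on every $H^s(\T)$, with $\|W(t)\|_{{\mathcal L}(H^s)}=1$ for all $t\in\R$. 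I would establish local well-posedness, then global existence through a priori bounds, and finally the real-analytic dependence on $u_0$.

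For local well-posedness I would run a Banach fixed point argument for the map $\Gamma(u)(t)=W(t)u_0+\int_0^tW(t-\tau)A(u^2/2)(\tau)\,d\tau$ on a ball of $X_T^s$. The only nonlinear input is the one-dimensional product estimate
\be
\|u^2\|_{H^{s-1}(\T)}\le C_s\,\|u\|_{H^s(\T)}^2,\qquad s\ge 0,
\ee
valid because $2s-(s-1)=s+1>1/2$; together with $A\in{\mathcal L}(H^{s-1},H^{s})$ this gives $\|A(u^2/2)\|_{H^s}\le C_s\|u\|_{H^s}^2$. Since $W$ is unitary, $\Gamma$ maps the ball of radius $2\|u_0\|_{H^s}$ into itself and is a contraction there as soon as $T\le T_0(\|u_0\|_{H^s})$, with $T_0$ depending only on the norm of the datum. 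This yields a unique local solution on $[-T_0,T_0]$, and the same estimate gives uniqueness in $X_T^s$; the local lifespan depending only on $\|u(t)\|_{H^s}$ furnishes a blow-up alternative for continuation.

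Global existence then amounts to ruling out blow-up of $\|u(t)\|_{H^s}$. For $s\ge 1$ this is immediate: multiplying \eqref{B1} by $u$ and integrating by parts over $\T$ shows $\|u\|_{H^1}^2=\int_\T(u^2+u_x^2)\,dx$ is conserved, while the skew-adjointness of $A$ on $H^s(\T)$ (equivalently, the unitarity of $W$) gives $\langle Au,u\rangle_{H^s}=0$. Using the tame estimate $\|u^2\|_{H^{s-1}}\lesssim\|u\|_{L^\infty}\|u\|_{H^{s-1}}$ and the embedding $H^1(\T)\hookrightarrow L^\infty(\T)$,
\be
\tfrac12\tfrac{d}{dt}\|u\|_{H^s}^2=\langle A(u^2/2),u\rangle_{H^s}\le\|A(u^2/2)\|_{H^s}\|u\|_{H^s}\le C\|u\|_{L^\infty}\|u\|_{H^s}^2\le C\|u_0\|_{H^1}\|u\|_{H^s}^2,
\ee
so Gronwall's inequality bounds $\|u(t)\|_{H^s}$ on $[-T,T]$ for every $T$, and the solution is global. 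The main obstacle is the range $0\le s<1$, where the $H^1$ energy is unavailable and the crude estimate only closes to a Riccati inequality $\tfrac{d}{dt}\|u\|_{H^s}^2\lesssim\|u\|_{H^s}^3$, insufficient for global control. The remedy, which is the genuine technical content of \cite{BT,Roumegoux}, exploits the smoothing of the nonlinear term: in one dimension the product estimates place $u^2$ in a space strictly better than $H^{s-1}$ (for instance $u^2\in H^{2s-1/2}$ for $1/4<s\le1/2$, whence $A(u^2/2)\in H^{2s+1/2}$ with $2s+1/2>s$), so $u$ decomposes as the $H^s$-conserving unitary evolution $W(t)u_0$ plus a regularity-gaining correction; bootstrapping this gain up to the $H^1$ level, where the conservation law applies, supplies the missing global a priori bound.

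For the real-analyticity I would view $u$ as the solution of $F(u_0,u):=u-\Gamma(u_0,u)=0$, where $F:H^s(\T)\times X_T^s\to X_T^s$ is a polynomial (quadratic) map, hence real-analytic between Banach spaces. Its partial differential $\partial_uF=I-\partial_u\Gamma$ is the identity minus the Volterra operator $v\mapsto\int_0^{t}W(t-\tau)A(u(\tau)v(\tau))\,d\tau$; because a Volterra integral operator on $X_T^s$ has vanishing spectral radius (its iterates carry the factors $(CT)^n/n!$), $I-\partial_u\Gamma$ is invertible at every point, with no smallness of $u_0$ or $T$ required. The analytic implicit function theorem then exhibits the zero set of $F$ locally as the graph of a real-analytic map $u_0\mapsto u$, and since the global solution is unique and defined for all $u_0\in B_R(H^s(\T))$, this local branch coincides with the solution map; as real-analyticity is a local property, the map $u_0\mapsto u$ is real-analytic on all of $B_R(H^s(\T))$ into $X_T^s$.
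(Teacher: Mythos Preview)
The paper does not supply its own proof of this theorem; it is quoted from \cite{BT,Roumegoux} as a background result, so there is no argument in the paper to compare against directly. Your outline of the local theory and of the case $s\ge 1$ is correct and standard. For the analytic dependence on $u_0$, your use of the analytic implicit function theorem together with the zero spectral radius of the Volterra linearization is a clean route; the paper, in a related context (Proposition~\ref{analytic}), instead builds the power series in $t$ by Picard iteration and explicit Cauchy estimates on the Taylor coefficients.

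The sketch for $0\le s<1$, however, does not describe the mechanism that actually works. Writing $u(t)=W(t)u_0+\int_0^t W(t-\tau)A(u^2/2)\,d\tau$ and observing that the integral gains regularity is correct, but this does not by itself give a global a~priori bound: the correction does not solve BBM, so no conservation law applies to it, and its higher-Sobolev norm carries a factor of $t$ times the (as yet uncontrolled) quantity $\|u\|_{C([0,t];H^s)}^2$, which is precisely what one is trying to bound. The device used in \cite{BT,Roumegoux}, and reproduced verbatim later in the paper (Step~3 of the proof of Theorem~\ref{global} for the damped KdV--BBM equation), is a frequency decomposition of the \emph{initial datum}: write $u_0=w_0+v_0$ with $w_0$ a finite Fourier truncation (hence $w_0\in H^1(\T)$) and $\|v_0\|_{H^s}$ small; solve globally for the small piece $v$ by the local theory with lifespan depending only on $\|v_0\|_{H^s}$; then $w:=u-v$ solves a BBM-type equation with the extra term $(vw)_x$, for which an $H^1$ energy identity plus Gronwall gives a global $H^1$ bound. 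Your smoothing observation is a relevant ingredient, but it is this coupled splitting---not a bootstrap of the Duhamel remainder---that closes the argument below $H^1$.
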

Some additional properties are collected in the following
\begin{proposition}
\label{analytic}
For $u_0\in H^1(\T )$, the solution $u(t)$ of the IVP \eqref{B1}-\eqref{B2} satisfies $u\in C^\omega (\R ;H^1(\T ))$. Moreover the three integral terms
$\int_{\T}u\, dx$, $\int_{\T} (u^2+u_x^2)dx$ and $\int_{T} (u^3+3u^2)dx$ are invariants of motion (i.e., they remain constant over time).
\end{proposition}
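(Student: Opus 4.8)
The plan is to handle the two assertions in turn: first the analyticity in time of $t\mapsto u(t)$, then the three conservation laws.

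For the time analyticity, I would view the problem as an autonomous ODE in the Banach space $H^1(\T)$. Writing \eqref{B1} as $(1-\partial_x^2)u_t=-\partial_x(u+u^2/2)$ and recalling $A=-(1-\partial_x^2)^{-1}\partial_x$, it becomes $\frac{du}{dt}=F(u):=Au+A(u^2/2)$. Since $A\in\mathcal L(H^1(\T),H^2(\T))\subset\mathcal L(H^1(\T))$ and $H^1(\T)$ is a Banach algebra, $F$ is a polynomial map of degree two from $H^1(\T)$ into itself, hence entire. Around any $t_0\in\R$ I would run the Picard iteration
\be
u_{n+1}(z)=e^{zA}u(t_0)+\int_0^z e^{(z-\zeta)A}A\Big(\frac{1}{2}u_n(\zeta)^2\Big)\,d\zeta,\qquad z\in\C,\ |z|<\delta,
\ee
with the integral along the segment $[0,z]$ and $z\mapsto e^{zA}$ entire because $A$ is bounded; using $\|e^{zA}\|_{\mathcal L(H^1)}\le e^{|z|\,\|A\|}$ and $\|A(fg)\|_{H^1}\le C\|f\|_{H^1}\|g\|_{H^1}$, this converges uniformly on some disc $|z|<\delta$, with $\delta$ depending only on $\|u(t_0)\|_{H^1}$, to a holomorphic $H^1_{\C}(\T)$-valued map whose restriction to $\R$ is $t\mapsto u(t_0+t)$ (by uniqueness in Theorem \ref{thmA}). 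Since $t_0$ is arbitrary and, by Theorem \ref{thmA}, $u$ is globally defined in $C(\R;H^1(\T))$, this yields $u\in C^\omega(\R;H^1(\T))$. Alternatively, one may simply invoke the general analyticity of the flow of an analytic vector field on a Banach space.

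For the conservation laws I would exploit that $A=-(1-\partial_x^2)^{-1}\partial_x$ is skew-adjoint on $L^2(\T)$ — because $(1-\partial_x^2)^{-1}$ is self-adjoint, $\partial_x$ is skew-adjoint, and the two commute — together with $A1=0$. Writing $\langle\cdot,\cdot\rangle$ for the $L^2(\T)$ pairing and using $u_t=A(u+u^2/2)$: the mass is immediate, $\frac{d}{dt}\int_\T u\,dx=\langle A(u+u^2/2),1\rangle=-\langle u+u^2/2,A1\rangle=0$; for the energy, $\int_\T(u^2+u_x^2)\,dx=\langle(1-\partial_x^2)u,u\rangle$, whence $\frac{d}{dt}\int_\T(u^2+u_x^2)\,dx=2\langle(1-\partial_x^2)u,A(u+u^2/2)\rangle=-2\langle u,\partial_x(u+u^2/2)\rangle=2\int_\T u_x(u+u^2/2)\,dx=0$, the last integrand being an exact $x$-derivative. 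The third invariant is the delicate one: multiplying \eqref{B1} by $u^2$ or by $u^2+2u$ and integrating by parts leaves the stray term $\int_\T u_t u_x^2\,dx$, and every manipulation of it circles back to an identity equivalent to the one to be proved. The way around this is that the flux is, modulo a harmless additive constant, $A$ applied to a perfect square: since $u+u^2/2=\frac{1}{2}((u+1)^2-1)$ and $A1=0$ we have $u_t=\frac{1}{2}A((u+1)^2)$, so — using $u^2+2u=(u+1)^2-1$ and $\int_\T u_t\,dx=0$ —
\be
\frac{d}{dt}\int_\T(u^3+3u^2)\,dx=3\int_\T\big((u+1)^2-1\big)u_t\,dx=\frac{3}{2}\big\langle(u+1)^2,\,A\big((u+1)^2\big)\big\rangle=0,
\ee
the last equality by skew-adjointness of $A$. (Equivalently, one multiplies \eqref{B1} by the solution $\phi$ of $(1-\partial_x^2)\phi=(u+1)^2$ and integrates by parts, all boundary terms disappearing by periodicity.)

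The main obstacle is exactly this third conservation law, and the perfect-square/skew-adjointness observation is what removes the circularity met with naive multipliers. A minor technical point is that all these integrations by parts are licit at the $H^1(\T)$ level: the first part of the statement gives $u\in C^1(\R;H^1(\T))$, hence $u_t\in C(\R;H^1(\T))$, and the spatial pairings are taken in the $H^1(\T)/H^{-1}(\T)$ duality; alternatively one argues first for smooth data, where everything is classical, and passes to the limit using the continuity in Theorem \ref{thmA} and the continuity of the three functionals on $H^1(\T)$.
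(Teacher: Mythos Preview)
Your proof is correct. For the time analyticity and for the first two invariants (mass and $H^1$ energy) you do essentially what the paper does: view BBM as an ODE in $H^1(\T)$ with a polynomial (hence entire) vector field, run a Picard iteration in a complex disc, and justify the integrations by parts using $u_t\in C(\R;H^2(\T))$. The paper writes the iteration in the raw form $u^{q+1}(z)=u_0+\int_{[0,z]}f(u^q)$ rather than your Duhamel form with $e^{zA}$, but this is cosmetic.

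The one genuine difference is your treatment of the third invariant. The paper invokes Olver's local conservation law
\[
\Big(\tfrac13(u+1)^3\Big)_t-\Big(u_t^2-u_{xt}^2+(u+1)^2u_{xt}-\tfrac14(u+1)^4\Big)_x=0
\]
and integrates in $x$; the flux is an explicit but somewhat opaque expression. Your argument is shorter and more structural: you recognise that $u_t=\tfrac12 A\big((u+1)^2\big)$ because $A1=0$, and that $A=-(1-\partial_x^2)^{-1}\partial_x$ is skew-adjoint on $L^2(\T)$, so $\frac{d}{dt}\int_\T(u^3+3u^2)\,dx=\frac32\langle(u+1)^2,A((u+1)^2)\rangle=0$ directly. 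What your route buys is transparency and robustness (the same one-line computation would work for any skew-adjoint $A$ with $A1=0$); what the paper's route buys is an explicit local density--flux pair, which is stronger information if one ever needs the pointwise identity rather than just the integrated conservation law.
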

\begin{proof}
Let us begin with the invariants of motion.
For $u_0\in H^1(\T )$, $u\in X_T^1$ for all $T>0$, hence
\[
u_t = -(1-\partial _x ^2) ^{-1} \partial _x (u+\frac{u^2}{2}) \in X_T^2.
\]
Therefore, all the terms in \eqref{B1} belong to $X_T^0$. Scaling in \eqref{B1} by $1$ (resp. by $u$) yields after some integrations by parts
\[
\frac{d}{dt} \int_{\T } u\, dx =0 \qquad (\text{resp. } \ \frac{d}{dt} \int_{\T} (u^2+u_x^2)dx =0.)
\]
For the last invariant of motion, we notice (following \cite{Olver}) that
\[
(\frac{1}{3}(u+1)^3)_t - (u_t^2-u_{xt}^2+(u+1)^2u_{xt} -\frac{1}{4} (u+1)^4)_x=0.
\]
Integrating on $\T$ yields $(d/dt)\int_T(u+1)^3dx=0$. Since $(d/dt)\int_{\T} (3u+1)dx =0$, we infer that
\[
\frac{d}{dt} \int_{\T} (u^3+3u^2)dx=0.
\]
Let us now prove that $u\in C^\omega (\R ; H^1(\T ))$. Since $u\in C^1(\R ;  H^1 (\T ) )$, it is sufficient to check that for any $u_0\in H^1(\T ) $ there are some numbers
$b>0$, $M>0$, and some sequence $(u_n)_{n\ge 1}$ in $H^1(\T )$ with
\be
\label{P1}
||u_n||_{H^1} \le \frac{M}{b^n},\qquad n\ge 0,
\ee
such that
\be
\label{P2}
u(t) =\sum_{n\ge 0} t^nu_n,\qquad t\in (-b,b).
\ee
Note that the convergence of the series in \eqref{P2} holds in $H^1(\T )$ uniformly on $[-rb, rb]$ for each $r<1$.
Actually, we prove that $u$ can be extended as an analytic function from $D_b:=\{ z\in \C;\  |z| < b \}$ into the space
$H^1_\C (\T ):=H^1(\T ; \C)$,  endowed with the Euclidean norm
\[
|| \sum_{k\in \Z}  {\hat u}_k e^{ikx} ||_{H^1} =  ( \sum_{k\in \Z} (1+|k|^2) |{\hat u}_k |^2 )^{\frac{1}{2}} .
\]
We adapt the classical proof of the analyticity of the flow for an ODE with an analytic
vector field (see e.g. \cite{Hochschild}) to our infinite dimensional framework.
For  $u\in H^1_\C (\T )$, let $Au=-(1-\partial _x ^2)^{-1} \partial _x u$ and $f(u)=A(u+u^2)$. Since
$|k|\le (k^2 + 1)/2$ for all $k\in \Z$, $||A||_{ {\mathcal L} (H^1_\C (\T ))} \le 1/2$. Pick a positive constant  $C_1$ such that
\[
||u^2||_{H^1} \le C_1 ||u||^2_{H^1}\qquad \text{ for all } u\in H^1_\C (\T ).
\]
We define by induction on $q$ a sequence $(u^q)$ of analytic functions from $\C$ to $H^1_\C (\T)$ which will converge
uniformly on $D_T$, for $T>0$ small enough, to a solution of the integral equation
\[
u(z)= u_0 + \int_{[0,z]} f(u(\zeta ))d\zeta = u_0 + \int_0^1 f(u(sz))zds.
\]
Let
\begin{eqnarray*}
u^0(z)        &=& u_0, \qquad \text{ for } z\in \C \\
u^{q+1}(z) &=& u_0 + \int_{[0,z]} f(u^q(\zeta ))d\zeta ,\qquad \text{ for } q\ge 0,\ z\in \C .
\end{eqnarray*}
{\sc Claim 1.}  $u^q(z)=\sum_{n\ge 0} z^n v_n^q$ for all $z\in\C$ and some sequence $(v_n^q)$ in $H^1_\C (\T )$ with
\[
||v_n^q||_{H^1} \le \frac{M(q,b)}{b^n}\qquad \text{ for all } q,n\in \N, \ b>0.
\]
The proof of Claim 1 is done by induction on $q\ge 0$. The result is clear for $q=0$ with $M(0,b)=||u_0||_{H^1}$, since
$v_0^0=u_0$ and $v_n^0=0$ for $n\ge 1$.
Assume Claim 1 proved for some $q\ge 0$. Then, for any $r\in (0,1)$ and any $b>0$
\[
||z^n v_n^q||_{H^1} \le M(q,b)  r^n\qquad \text{ for } |z|\le rb,
\]
so that the series $\sum_{n\ge 0} z^n v_n^q$ converges absolutely in $H^1_\C (\T )$ uniformly for $z\in \overline{D_{rb}}$. The same holds true for the series
$\sum_{n\ge 0} z^n (\sum_{0\le l\le n}v_l^qv_{n-l}^q )$. It follows that
\[
f(u^q(\zeta ))=A\left( \sum_{n\ge 0}\zeta ^n v_n^q + \sum _{n\ge 0} \zeta ^n (\sum_{0\le l\le n} v_l^q v_{n-l}^q ) \right)
\]
converges uniformly for $\zeta\in \overline{D_{rb}}$. Thus
\begin{eqnarray*}
u^{q+1}(z)  &=& u_0 + \int_{[0,z]} \sum _{n\ge 0} \zeta ^n A (v_n^q + \sum_{0\le l\le n} v_l^q v_{n-l}^q)d\zeta \\
&=& \sum_{n\ge 0} z^n v_n^{q+1}
\end{eqnarray*}
where
\begin{eqnarray*}
v_0^{q+1}&=&u_0,\\
v_n^{q+1}&=&\frac{1}{n}A(v_{n-1}^q + \sum_{0\le l \le n-1}v_l^qv_{n-1-l}^q) \qquad \text{ for }\  n\ge 1.
\end{eqnarray*}
It follows that for $n\ge 1$
\[
||v_n^{q+1} ||_{H^1} \le \frac{||A||}{n} (\frac{M(q,b)}{b^{n-1}} + n C_1 \frac{M^2(q,b)}{b^{n-1}} ) \le \frac{M(q+1,b)}{b^n}
\]
with
\[
M(q+1,b):= \sup\{||u_0||_{H^1}, b||A||(M(q,b)+ C_1M^2(q,b)\} .
\]
Claim 1 is proved.\\
{\sc Claim 2.} Let $T:= (2||A||(1+4C_1||u_0||_{H^1}))^{-1}$. Then $||u^q-u||_{L^\infty ( \overline{D_T}; H^1_\C (\T ))} \to 0 $ as $q\to \infty$ for some
$u\in C(\overline{D_T};H^1_\C (\T ))$. \\
Let $Z_T=C(\overline{D_T}; H^1_\C (\T ))$ be endowed with the norm $|||v|||=\sup_{|z|\le T} ||v(z)||_{H^1}$. Let $R>0$, and for $v\in B_R:=\{ v\in Z_T; \ ||| v ||| \le R\} $, let
\[
(\Gamma v) (z) = u_0 + \int_{ [0,z] } f(v (\zeta ))\, d\zeta .
\]
Then
\begin{eqnarray*}
||| \Gamma v ||| &\le& ||u_0||_{H^1} + T ||A|| ( |||v||| + C_1 ||| v |||^2) \le ||u_0||_{H^1} + T ||A|| (R+C_1 R^2),\\
||\Gamma v_1-\Gamma v_2 ||| &\le & T ||A|| (|||v_1-v_2||| + |||v_1^2-v_2^2|||) \le T ||A|| (1+2C_1R) |||v_1-v_2|||.
\end{eqnarray*}
Pick $R=2||u_0||_{H^1}$ and $T=(2||A|| (1+2C_1R))^{-1}$. Then $\Gamma$ contracts in $B_R$. The sequence
$(u^q)$, which is given by Picard iteration scheme, has a limit $u$ in $Z_T$ which fulfills
\[
u(z)= u_0 + \int_{[0,z]} f(u(\zeta ))d\zeta, \qquad |z|\le T.
\]
In particular, $u\in C^1 ([-T,T];H^1(\T))$ (the $u^q(z)$ being real-valued for $z\in \R$)
and it satisfies $u_t=f(u)$ on $[-T,T]$ together with $u(0)=u_0$; that is, $u$ solves \eqref{B1}-\eqref{B2}
in the class $C^1([-T,T];H^1(\T))\subset X_T^1$.\\
{\sc Claim 3.} $u(z)= \sum_{n\ge 0} z^n v_n$ for $|z|<T$, where $v_n=\lim_{q\to \infty} v_n^q$ for each $n\ge 0$. \\
From Claim 1, we infer that for all $n\ge 1$
\[
v_n^q = \frac{1}{2\pi i} \int_{|z|= T} z^{-n-1} u^q(z)\, dz,
\]
hence
\[
||v_n^p - v_n^q||_{H^1} \le T^{-n} |||u^p-u^q|||.
\]
From Claim 2, we infer that $(v_n^q)$ is a Cauchy sequence in $H^1_\C (\T )$. Let $v_n$ denote its limit in $H^1_\C ( \T)$. Note that
\[
||v_n -v_n^q||_{H^1} \le  T^{-n} ||| u -u^q |||,
\]
and hence the series $\sum_{n\ge 0 } z^n v_n$ is convergent for $|z|<T$. Therefore, for $|z|\le rT$ with $r<1$,
\[
||\sum_{n\ge 0} z^n (v_n - v_n^q) ||_{H^1} \le (1-r)^{-1}  ||| u - u^q|||,
\]
and hence
$u^q(z)=\sum_{n\ge 0} z^nv_n^q\to \sum_{n\ge 0} z^nv_n$ in $Z_{rT}$ as $q\to \infty$. It follows that
\[
u(z)=\sum_{n\ge 0} z^n v_n\qquad \text{ for } |z| <T.
\]
The proof of Proposition \ref{analytic} is complete.
\end{proof}
\section{Unique Continuation Property for BBM}
\label{sec:UCP}
In this section we prove a UCP for the BBM equation for small solutions with nonnegative mean values.
\begin{theorem}
\label{UCP1}
Let $u_0\in H^1(\T )$ be such that
\ba
\int_{\T } u_0(x) dx &\ge&  0, \label{U1}\\
\text{and} \qquad ||u _0||_{L^\infty (\T )} &<& 3. \label{U2}
\ea
Assume that the solution $u$ of \eqref{B1}-\eqref{B2} satisfies
\begin{equation}
u(x,t) =0 \qquad \text{ for all } \ (x,t)\in \omega \times (0,T), \label{U3}
\end{equation}
where $\omega\subset \T$ is a nonempty open set and $T>0$. Then $u_0=0$, and hence $u\equiv 0$.
\end{theorem}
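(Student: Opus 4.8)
The plan is to combine three ingredients, in the spirit of LaSalle's invariance principle: the time-analyticity of the BBM flow, the conservation laws of Proposition~\ref{analytic}, and a Lyapunov functional tailored to the set $\{v:v|_\omega=0\}$. \emph{Step 1 (propagate \eqref{U3} to all times).} First I would fix $x\in\omega$. By Proposition~\ref{analytic}, $t\mapsto u(t)$ is real-analytic from $\R$ to $H^1(\T)$, and since $H^1(\T)\hookrightarrow C(\T)$ the evaluation at $x$ is a bounded linear functional, so $t\mapsto u(x,t)$ is real-analytic on $\R$. By \eqref{U3} it vanishes on $(0,T)$, hence on all of $\R$. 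Thus $u\equiv0$ on $\omega\times\R$, and consequently $u_x=u_t=u_{tx}=0$ there; inserting this into \eqref{B1} gives also $u_{txx}=0$ on $\omega\times\R$. So $u$ solves BBM while being ``invisible'' on the fixed open set $\omega$ at every time, which is what will trivialise the flux contributions in the energy identities below.

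\emph{Step 2 (the Lyapunov functional).} By Proposition~\ref{analytic} the quantities $\int_\T u$, $\int_\T(u^2+u_x^2)$ and $\int_\T(u^3+3u^2)$ are constant in time, so the orbit $\{u(t):t\in\R\}$ is bounded in $H^1(\T)$; since $u_t=-(1-\partial_x^2)^{-1}\partial_x(u+u^2/2)$ is then bounded in $H^2(\T)$, the map $t\mapsto u(t)$ is moreover Lipschitz from $\R$ into $H^1(\T)$. The core of the proof is to construct a functional $V(t)=\int_\T g(x)\,e(u,u_x)(x,t)\,dx$, with $g\ge0$ a weight whose derivative is localised inside $\omega$ and $e$ an energy-type density, such that $V$ is nonincreasing along the solution, with $\dot V(t)=-\Psi(u(t))\le0$, where $\Psi\ge0$ bounds below the squared $L^2$-distance of $v$ to its mean value (so that for the trajectory, whose mean $m:=\frac1{2\pi}\int_\T u_0$ is conserved, $\Psi(u(t))$ controls $\|u(t)-m\|_{L^2}^2$). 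Differentiating $V$ along \eqref{B1} and integrating by parts produces flux terms which, thanks to Step 1, collapse to an expression supported on $\T\setminus\overline\omega$; it is here that the hypotheses enter, $\|u_0\|_{L^\infty(\T)}<3$ (so that $u^2(u+3)\ge0$ and the cubic part of the third invariant stays coercive) controlling the sign of the cubic remainders generated by the nonlinearity $uu_x$, and $\int_\T u_0\ge0$ fixing the sign of the lower-order mean-dependent terms.

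\emph{Step 3 (conclusion).} Once such a $V$ is available, it is bounded (orbit bounded in $H^1$, $g\ge0$) and nonincreasing, so $\int_0^\infty\Psi(u(t))\,dt=V(0)-\lim_{t\to\infty}V(t)<\infty$. Since $t\mapsto u(t)$ is Lipschitz into $H^1(\T)$, the orbit is bounded, and $\Psi$ is locally Lipschitz on $H^1(\T)$, the map $t\mapsto\Psi(u(t))$ is uniformly continuous, so Barbalat's lemma gives $\Psi(u(t))\to0$, hence $u(t)\to m$ in $L^2(\T)$. But $u(t)=0$ on the nonempty open set $\omega$ for all $t$, which forces $m=0$; therefore $u(t)\to0$ in $L^2(\T)$. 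Since $\|u(t)\|_{L^\infty(\T)}\le C\|u_0\|_{H^1}$ stays bounded, $\int_\T(u(t)^3+3u(t)^2)\,dx\to0$; this integral equals the conserved value $\int_\T u_0^2(u_0+3)\,dx$, which by \eqref{U2} is $\ge0$ and vanishes only if $u_0\equiv0$. Hence $u_0=0$, and so $u\equiv0$.

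\emph{Main obstacle.} The delicate point is the construction in Step 2 of a Lyapunov functional with sign-definite dissipation: the nonlocal operator $(1-\partial_x^2)^{-1}$ appearing in \eqref{B3} couples all spatial scales, so a naive weighted energy does not produce a dissipation of definite sign, and one must choose the density $e$ and the weight $g$ so that, after using \eqref{B1} and the vanishing on $\omega$, the cubic terms can be absorbed using exactly the thresholds $\|u_0\|_{L^\infty(\T)}<3$ and $\int_\T u_0\ge0$. A secondary point requiring care is to verify that $\Psi$ indeed controls the distance of $u(t)$ to the constants, so that Barbalat's lemma closes the argument.
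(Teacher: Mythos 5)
Your Steps 1 and 3, and the overall LaSalle-type strategy (time-analyticity $+$ conserved quantities $+$ a decreasing functional), are exactly the paper's. But there is a genuine gap at the heart of the argument: Step 2 announces a Lyapunov functional $V(t)=\int_\T g(x)\,e(u,u_x)\,dx$ with sign-definite dissipation and then never constructs it — you yourself flag this as the ``main obstacle.'' As written, the proof does not exist, because everything downstream (Barbalat, convergence to the mean, the final use of the cubic invariant) hinges on a functional whose existence is only conjectured. Moreover, the form you guess is not the right one: no weighted energy density in $(u,u_x)$ with $g'$ supported in $\omega$ is needed, and the claim that \eqref{U2} is what tames ``cubic remainders'' in the dissipation identity is a misdiagnosis. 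The actual functional is much simpler and is where \eqref{U1} (not \eqref{U2}) enters. Identify $\T$ with $(0,2\pi)$ so that $\omega\supset(0,\varepsilon)\cup(2\pi-\varepsilon,2\pi)$ (possible after Step 1, since $u$ vanishes on $\omega$ for all $t$), set $v(x,t)=\int_0^x u(y,t)\,dy$ and $I(t)=\int_0^{2\pi}v(x,t)\,dx$. Integrating \eqref{B1} from $0$ to $x$ gives $v_t-v_{txx}+v_x+\tfrac{u^2}{2}=0$, and integrating this over $(0,2\pi)$ (the boundary terms vanish because the endpoints lie in $\omega$) yields
\begin{equation*}
I'(t)=-\int_0^{2\pi}u_0\,dx-\frac12\int_0^{2\pi}|u(x,t)|^2\,dx\le 0,
\end{equation*}
which is $\le 0$ precisely by \eqref{U1}. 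The dissipation directly controls $\|u(t)\|_{L^2}^2$ (not the distance to the mean), and since $I$ is bounded (conservation of the $H^1$ norm), one concludes that $\|u(t)\|_{L^2}\to 0$ along a suitable sequence $t_n\to\infty$ — the paper does this by passing to a weak limit trajectory $\tilde u$ and showing $\tilde I(0)=\tilde I(1)$ forces $\tilde u\equiv 0$, though your Barbalat route would also close once this $I$ is in hand.

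Your endgame is correct and coincides with the paper's: $u(t_n)\to 0$ in $L^2$ together with the conserved quantity $\int_\T(u^3+3u^2)\,dx=\int_\T u_0^2(u_0+3)\,dx$ and the bound $\|u_0\|_{L^\infty}<3$ forces $u_0=0$; this is the only place \eqref{U2} is used. So the verdict is: right skeleton, correct first and last steps, but the central object — the monotone functional $I$ built from the primitive of $u$ — is missing, and without it the argument is a plan rather than a proof.
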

\begin{proof} We identify $\T$ to $(0,2\pi )$ in such a way that  $\omega
\supset (0,\varepsilon ) \cup (2\pi -\varepsilon , 2\pi )$ for some $\varepsilon >0$. Since
$u \in C^\omega (\R ;H^1(\T ))$ by Proposition \ref{analytic}, we have that $u(x,.)\in C^\omega (\R )$ for all $x\in \T$.
\eqref{U3} gives then that
\begin{equation}
u(x,t) = 0 \quad \text{ for } (x,t)\in \omega \times \R . \label{U4}
\end{equation}
Introduce the function
\[
v(x,t) = \int_0^x u(y,t) dy.
\]
Then $v\in C^\omega (\R ;H^2(0, 2\pi  )) $ and $v$ satisfies
\begin{equation}
v_t -v_{txx} +v_x + \frac{u^2}{2} =0, \qquad x\in (0,2\pi ),  \label{U5}
\end{equation}
as it may be seen by integrating \eqref{B1} on $(0,x)$. Let
\[
I(t) = \int_0^{2\pi}  v(x,t) dx.
\]
Note that $I\in C^\omega (\R )$. Integrating  \eqref{U5} on $(0,2\pi ) $ gives with \eqref{U1}
\[
I_t =  -\int_0^{2\pi} u_0(x) dx -\frac{1}{2}\int_0^{2\pi} |u(x,t)|^2 dx \le 0.
\]
Since $||u(t)||_{H^1}=||u_0||_{H^1}$ for all $t\in\R$, $v\in L^\infty (\R , H^2( 0, 2\pi  ))$ and $I\in L^\infty (\R )$. It follows that
the function $I$ has a finite limit as $t\to \infty$, that we denote by $l$. From the boundedness of $||u(t)||_{H^1(\T )}$   for $t\in\R$,
we infer the existence of a sequence $t_n\nearrow +\infty$ such that
\begin{equation}
u(t_n)\cf {\tilde u}_0 \qquad \text{ in } H^1(\T )\label{U8}
\end{equation}
for some $\tilde u_0\in H^1(\T )$. Let $\tilde u$ denote the solution of the IVP for BBM corresponding to the initial
data ${\tilde u}_0$; that is,  $\tilde u$ solves
\begin{eqnarray*}
&&\tilde u_t -\tilde u_{txx} +\tilde u_x + \tilde u\tilde u_x =0, \qquad x\in\T, \ t\in \R , \\
&&\tilde u(x,0) =\tilde u_0(x).
\end{eqnarray*}
Pick any $s\in (1/2,1)$. As $u(t_n)\to \tilde u_0$ strongly in $ H^s (\T )$, we infer from Theorem \ref{thmA} that
\be
u(t_n+\cdot ) \to \tilde u \quad \text{ in } C([0,1];H^s(\T )). \label{U11}
\ee
It follows from \eqref{U4}, \eqref{U11} and the fact that $\tilde u\in C^\omega (\R , H^1(\T ))$ that
\begin{equation*}
\tilde u(x,t) = 0 \qquad \text{ for } (x,t)\in \omega \times \R.
\end{equation*}
On the other hand, $\int_0^{2\pi} \tilde u_0(x)dx=\int _0^{2\pi} u_0(x)dx$ from \eqref{U8} and the invariance of $\int_0^{2\pi} u(x,t)dx$.
Let $\tilde v(x,t) =\int_0^x \tilde u (y,t) dy$ and $\tilde I (t) = \int_0^{2\pi}  \tilde v (x,t) dx $. Then we still have that
\be
\tilde I _t = -\int_0^{2\pi} u_0(x) dx -\frac{1}{2}\int_0^{2\pi} |\tilde u (x,t)|^2 dx \le 0. \label{U12}
\ee
But we infer from \eqref{U11} that
\begin{eqnarray*}
I(t_n)\to \tilde I(0),\qquad I(t_n +1)\to \tilde I(1).
\end{eqnarray*}
Since
\[
\lim_{n\to\infty} I(t_n) = \lim_{n\to \infty} I(t_n +1)=l,
\]
we have that $\tilde I(0)=\tilde I(1)$. Combined to \eqref{U12}, this yields
\[
\tilde u(x,t) =0 \quad  (x,t)\in \T \times [0,1].
\]
In particular, $\tilde u_0=0$. From \eqref{U8}, we infer that
\[
\int_0^{2\pi}  ( u^3(x,t_n) + 3 u^2(x,t_n) ) dx \to 0\quad  \text{ as } n\to \infty .
\]
As $\int_0^{2\pi}  (u^3+3u^2)dx$ is a conserved quantity, we infer  that
\[
\int_0^{2\pi}   (3+u_0(x))\, |u_0(x)|^2dx=0,
\]
which, combined to \eqref{U2}, yields $u_0=0$.
\end{proof}
\begin{remark}
Note that Theorem \ref{UCP1} is false if the assumptions $u_0\in H^1(\T)$ and \eqref{U1} are removed. Indeed, if $u\in C(\R; L^2(\T))$ is defined for
$x\in \T\sim (0,2\pi)$ and $t\in \R$ by
\[
u(x,t)=u_0(x)=
\left\{
\begin{array}{ll}
-2 \quad &\text{if }\  |x-\pi |\le  \frac{\pi}{2}, \\
0              &\text{if }\  \frac{\pi}{2} < |x-\pi | < \pi ,
\end{array}
\right.
\]
then \eqref{B1} and \eqref{B2} are satisfied, although $u\not\equiv 0$.
\end{remark}
\section{Unique Continuation Property for BBM-like equations}
\label{sec:UCP2}
We shall consider BBM-like equations with different nonlinear terms. We first consider a generalized BBM equation without drift term, and next
a BBM-like  equation with  a nonlocal bilinear term.
\subsection{Generalized BBM equation without drift term}
We consider the following generalized BBM equation
\ba
&&u_t-u_{txx} +[f(u)]_x =0,\qquad x\in \T , \ t\in \R  \label{V1}\\
&&u(x,0)= u_0(x), \label{V2}
\ea
where $f\in C^1(\R )$, $f(u)\ge 0$ for all $u\in \R$, and the only solution $u\in (-\delta ,\delta )$ of $f(u)=0$ is $u=0$, for some number $\delta >0$.
That class of BBM-like equations includes the Morrison-Meiss-Carey equation 
\[
u_t - u_{txx} +uu_x=0
\]
for $f(u)=u^2/2$. Note that the global wellposedness of \eqref{V1}-\eqref{V2} in $H^1(\T )$  can easily be derived from the contraction mapping theorem
and the conservation of the  $H^1$-norm.   It turns out that the UCP can be derived in a straight way and without any additional assumption on the initial data.
\begin{theorem}
\label{thm4}
Let $f$ be as above, and let $\omega$ be a nonempty open set in $\T$. Let $u_0\in H^1(\T )$ be such that the solution $u$ of \eqref{V1}-\eqref{V2} satisfies
$u(x,t)=0$ for $(x,t)\in \omega \times (0,T)$ for some $T>0$. Then $u_0=0$.
\end{theorem}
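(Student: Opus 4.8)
\medskip
\noindent\emph{Plan.} The plan is to run the La Salle--type argument from the proof of Theorem~\ref{UCP1}, with the sign condition $f\ge0$ replacing the smallness hypothesis \eqref{U2}; the sign will let us exploit the relevant monotone functional at \emph{both} time infinities, and a connectedness property of $f^{-1}(0)$ will then close the argument. Identify $\T$ with $(0,2\pi)$ so that $\omega\supset(0,\varepsilon)\cup(2\pi-\varepsilon,2\pi)$ for some $\varepsilon>0$. Arguing exactly as in Proposition~\ref{analytic}, the flow of \eqref{V1} is analytic in time, so $u\in C^\omega(\R;H^1(\T))$ and each $u(x,\cdot)$ is real analytic; since $u=0$ on $\omega\times(0,T)$, we get $u=0$ on $\omega\times\R$. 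Put $v(x,t)=\int_0^x u(y,t)\,dy$; integrating \eqref{V1} over $(0,x)$ and using $u(0,t)=0$ and $f(0)=0$ gives $v_t-v_{txx}+f(u)=0$ on $(0,2\pi)\times\R$, and, since $u$ and $u_t$ vanish near $x=0$ and near $x=2\pi$, the functional $I(t):=\int_0^{2\pi}v(x,t)\,dx$ satisfies $I'(t)=-\int_0^{2\pi}f(u(x,t))\,dx\le0$. By conservation of $\|u(t)\|_{H^1}$, $I$ is bounded on $\R$; being non-increasing, it has finite limits $l_\pm$ as $t\to\pm\infty$.

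\smallskip
\noindent\emph{La Salle step at $+\infty$.} Take any $t_n\nearrow+\infty$. As $\{u(t_n)\}$ is bounded in $H^1(\T)$, along a subsequence $u(t_n)\cf\tilde u_0$ in $H^1(\T)$, hence $u(t_n)\to\tilde u_0$ strongly in $H^s(\T)$ for $s\in(1/2,1)$ and in $C(\T)$. Let $\tilde u$ solve \eqref{V1}--\eqref{V2} with datum $\tilde u_0$; by continuous dependence for \eqref{V1} (which, as recalled before the statement, follows from the contraction principle and $H^1$-conservation), $u(t_n+\cdot)\to\tilde u$ in $C([0,1];H^s(\T))$, so in particular $\tilde u=0$ on $\omega\times[0,1]$. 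Setting $\tilde v(x,t)=\int_0^x\tilde u(y,t)\,dy$ and $\tilde I(t)=\int_0^{2\pi}\tilde v\,dx$, the same computation gives $\tilde I'=-\int_0^{2\pi}f(\tilde u)\le0$, while $I(t_n)\to\tilde I(0)$ and $I(t_n+1)\to\tilde I(1)$; since $t_n\to+\infty$ we also have $I(t_n)\to l_+$ and $I(t_n+1)\to l_+$, whence $\tilde I(0)=\tilde I(1)=l_+$. As $\tilde I$ is non-increasing this forces $\tilde I'\equiv0$ on $[0,1]$, i.e.\ $f(\tilde u(x,t))=0$ for all $(x,t)\in(0,2\pi)\times[0,1]$.

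\smallskip
\noindent\emph{Closing.} Here is the key point: since $f\ge0$ and $0$ is the only zero of $f$ in $(-\delta,\delta)$, $f$ is $>0$ on $(-\delta,0)\cup(0,\delta)$, so $0$ is an isolated point of the closed set $f^{-1}(0)$; hence any continuous $w:\T\to\R$ that vanishes on a nonempty open subset of $\T$ and satisfies $f(w)\equiv0$ is identically zero, because $w(\T)$ is then a connected subset of $f^{-1}(0)$ containing $0$ and thus equals $\{0\}$. Applying this to $\tilde u(\cdot,t)$ for each $t\in[0,1]$, we get $\tilde u\equiv0$ on $\T\times[0,1]$, so $\tilde u_0=0$; consequently $u(t_n)\to0$ in $C(\T)$ and $l_+=\lim_nI(t_n)=\tilde I(0)=0$. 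Running the identical argument as $t\to-\infty$ (equivalently, applying it to the solution $(x,t)\mapsto u(-x,-t)$ of \eqref{V1}, which vanishes on a nonempty open set times $\R$) gives $l_-=0$ too. Hence $l_-=l_+=0$; as $I$ is non-increasing on $\R$ with zero limits at $\pm\infty$, it is identically zero, so $I'\equiv0$ and $f(u(x,t))=0$ for every $(x,t)\in\T\times\R$. Applying the connectedness fact once more to $u(\cdot,t)$ itself yields $u(\cdot,t)\equiv0$ for all $t$; in particular $u_0=0$.

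\smallskip
\noindent\emph{Main obstacle.} The essential difficulty is that for a general nonnegative $f$ there is no substitute for the third polynomial invariant $\int_\T(u^3+3u^2)\,dx$ used at the end of the proof of Theorem~\ref{UCP1}, so one cannot finish through a conserved quantity. The point of the scheme above is that the sign $f\ge0$ makes the functional $I$ monotone and thus allows the La Salle argument to be run at \emph{both} $\pm\infty$; after that, the topological fact that $0$ is isolated in $f^{-1}(0)$ (together with the connectedness of $\T$ and the embedding $H^1(\T)\hookrightarrow C(\T)$) forces $u\equiv0$ with no smallness assumption whatsoever. The only other step requiring care is the time-analyticity of the flow of \eqref{V1} needed to pass from $\omega\times(0,T)$ to $\omega\times\R$, obtained exactly as in Proposition~\ref{analytic}.
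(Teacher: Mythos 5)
Your argument has a genuine gap at its very first step, and under the stated hypotheses it is fatal. Theorem \ref{thm4} assumes only $f\in C^1(\R)$, and your whole scheme rests on the claim that, ``arguing exactly as in Proposition \ref{analytic}'', the flow of \eqref{V1} is analytic in time, so that $u=0$ on $\omega\times(0,T)$ propagates to $\omega\times\R$. But the proof of Proposition \ref{analytic} is specific to the quadratic nonlinearity: it extends the Picard iterates $z\mapsto u_0+\int_{[0,z]}A\bigl(u^q+(u^q)^2\bigr)\,d\zeta$ holomorphically to a complex disc, which is possible precisely because $u\mapsto A(u+u^2)$ is a polynomial, hence entire, map of $H^1_\C(\T)$. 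For a general $C^1$ --- or even $C^\infty$ but non-analytic --- nonlinearity, the vector field $u\mapsto -(1-\partial_x^2)^{-1}\partial_x f(u)$ admits no holomorphic extension and the flow is not time-analytic. Without that, you cannot pass from $\omega\times(0,T)$ to $\omega\times\R$, and the La Salle argument, which needs the solution to vanish on $\omega$ at arbitrarily large positive \emph{and} negative times, cannot even start. The remainder of your plan (monotonicity of $I$, the two-sided limits $l_\pm=0$ replacing the third invariant of Theorem \ref{UCP1}, and the isolated-zero/connectedness argument to pass from $f(u)\equiv0$ to $u\equiv0$) is internally sound, and the last step coincides with the paper's own closing observation; but it is all conditional on an analyticity that is not available here.

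The paper's proof is far shorter and works directly on $(0,T)$ with no asymptotics: extend $u$ by zero to $\R\times(0,T)$ (legitimate since $u$ vanishes on $\omega\times(0,T)$), multiply the equation by $e^x$ and integrate over $\R$. Two integrations by parts give $\int u_{txx}e^x\,dx=\int u_te^x\,dx$, so the time-derivative terms cancel, while $\int[f(u)]_xe^x\,dx=-\int f(u)e^x\,dx$ (using $f(0)=0$ and the compact support). Hence $\int_\R f(u(x,t))e^x\,dx=0$ for each $t\in(0,T)$, and $f\ge0$ forces $f(u)\equiv0$; the continuity of $u$, its vanishing near the endpoints, and the isolatedness of $0$ in $f^{-1}(0)$ then give $u\equiv0$, so $u_0=0$. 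Note that the weight $e^x$ is exactly the ``frequency'' $\xi=i$ singled out in the introduction; this is the one-line substitute for your entire dynamical argument, and it requires neither analyticity nor any regularity of $f$ beyond $C^1$.
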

\begin{proof}
Once again, we can assume without loss of generality that $\omega =(0,\varepsilon ) \cup (2\pi -\varepsilon , 2\pi)$. The prolongation of $u$ by 0 on
$(\R \setminus (0,2\pi ))\times (0,T)$, still denoted by $u$, satisfies
\ba
u_t-u_{txx} +[f(u)]_x =0,&& x\in \R,\ t\in (0,T)  \label{V5} \\
u(x,t) =0,&&  x\not\in (\varepsilon, 2\pi -\varepsilon ) , \ t\in (0,T) \label{V6}\\
u\in C([0,T];H^1(\R )),&& u_t\in C([0,T];H^2(\R )). \label{V7}
\ea
Scaling in \eqref{V5} by $e^x$ yields for $t\in (0,T)$
\[
\int_{-\infty}^\infty f(u(x,t))e^x dx=0,
\]
for $\int_{-\infty }^\infty u_{txx}e^x dx = \int_{-\infty}^\infty u_te^xdx$ by two integrations by parts. Since $f$ is nonnegative, this yields
\[
f(u(x,t))=0\qquad \text{ for } (x,t)\in \R \times (0,T).
\]
Since $u$ is continuous and it vanishes for $x\not\in (\varepsilon , 2\pi -\varepsilon)$, we infer from the assumptions about $f$ that $u\equiv 0$.
\end{proof}
Pick any nonnegative function $a\in C^\infty ( \T )$ with $\omega := \{ x\in\T;\ a(x) > 0\}$ nonempty.  We are interested in the stability properties of the system
\ba
&&u_t-u_{txx} +[f(u)]_x +a(x)u=0,\qquad x\in \T ,\  t\ge 0    \label{K11}\\
&&u(x,0)= u_0(x), \label{K12}
\ea
where $f$ is as above. The following weak stability result holds.
\begin{corollary}
\label{cor2}
Let $u_0\in H^1(\T)$. Then \eqref{K11}-\eqref{K12} admits a unique solution $u\in C([0,T];H^1(\T))$ for all $T>0$. Furthermore, $u(t)\to 0$ weakly in $H^1(\T)$, hence strongly in
$H^s(\T )$ for $s<1$, as $t\to +\infty$.
\end{corollary}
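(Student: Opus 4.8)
The plan is to mimic the La Salle invariance argument used in the proof of Theorem~\ref{UCP1}, now exploiting the energy dissipation produced by the damping term $a(x)u$ together with the unique continuation result of Theorem~\ref{thm4}. For the well-posedness, one writes \eqref{K11} in the integral form
\[
u(t)=u_0+\int_0^t\Big(Af(u(s))-(1-\partial_x^2)^{-1}\big(a\,u(s)\big)\Big)\,ds,\qquad A=-(1-\partial_x^2)^{-1}\partial_x,
\]
and obtains a unique local solution in $C([0,\tau];H^1(\T))$ by a contraction mapping argument exactly as for \eqref{V1}-\eqref{V2}, since $u\mapsto f(u)$ and $u\mapsto au$ map $H^1(\T)$ into $H^1(\T)$. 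Scaling \eqref{K11} by $u$ and integrating over $\T$, the term $\int_\T u\,[f(u)]_x\,dx=\int_\T \tfrac{d}{dx}\big(\int_0^{u}s f'(s)\,ds\big)\,dx$ vanishes, and one obtains
\begin{equation}
\frac{1}{2}\frac{d}{dt}\int_\T(u^2+u_x^2)\,dx+\int_\T a(x)\,u^2\,dx=0.\label{cor2-energy}
\end{equation}
Since $a\ge 0$, the map $t\mapsto\|u(t)\|_{H^1}$ is nonincreasing, so the solution is global with $\|u(t)\|_{H^1}\le\|u_0\|_{H^1}$ for all $t\ge 0$, and integrating \eqref{cor2-energy} in time gives $\int_0^\infty\!\!\int_\T a(x)\,u^2\,dx\,dt<\infty$.

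For the asymptotics, fix a sequence $t_n\to+\infty$. The uniform $H^1$ bound together with the identity $u_t=Af(u)-(1-\partial_x^2)^{-1}(au)$ (which makes $u_t(t_n+\cdot)$ bounded in $C([0,1];H^2(\T))$, since $f(u),au\in H^1(\T)$) shows that $\{u(t_n+\cdot)\}$ is bounded in $C([0,1];H^1(\T))$ and equi-Lipschitz as a family of maps into $L^2(\T)$. By Arzel\`a--Ascoli and the compact embedding $H^1(\T)\hookrightarrow L^2(\T)$, after extracting a subsequence one has $u(t_n+\cdot)\to\Phi$ in $C([0,1];L^2(\T))$, hence — interpolating with the $H^1$ bound — in $C([0,1];H^s(\T))$ for every $s<1$, with $\Phi$ bounded in $C([0,1];H^1(\T))$. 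Fixing $s\in(1/2,1)$ so that $H^s(\T)$ is a Banach algebra, one passes to the limit in the integral equation and finds that $\Phi$ solves \eqref{K11} on $[0,1]$ with datum $\psi:=\lim_n u(t_n)$; by uniqueness, $\Phi$ is the solution of \eqref{K11}-\eqref{K12} issuing from $\psi$. Moreover $\int_{t_n}^{t_n+1}\!\!\int_\T a\,u^2\,dx\,dt\to 0$ and $u(t_n+\cdot)\to\Phi$ in $C([0,1];L^2(\T))$, so $\int_0^1\!\!\int_\T a\,\Phi^2\,dx\,dt=0$, whence $\Phi\equiv 0$ on $\omega\times(0,1)$ with $\omega=\{a>0\}$.

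The key observation is that this forces $a\,\Phi\equiv 0$ on $\T\times(0,1)$: where $a>0$ we are in $\omega$ so $\Phi=0$, and where $a=0$ the product is trivially zero. Hence $\Phi$ solves the undamped equation $\Phi_t-\Phi_{txx}+[f(\Phi)]_x=0$ on $\T\times(0,1)$, so by uniqueness for \eqref{V1}-\eqref{V2} it coincides on $[0,1]$ with the global solution of the undamped generalized BBM equation issuing from $\psi$, which therefore vanishes on $\omega\times(0,1)$. Theorem~\ref{thm4} then gives $\psi=0$. Thus every sequence $t_n\to+\infty$ admits a subsequence along which $u(t_n)\rightharpoonup 0$ in $H^1(\T)$, and the standard subsequence argument yields $u(t)\rightharpoonup 0$ in $H^1(\T)$ as $t\to+\infty$; combined with $\|u(t)\|_{H^1}\le\|u_0\|_{H^1}$ and the compactness of $H^1(\T)\hookrightarrow H^s(\T)$ for $s<1$, this gives $u(t)\to 0$ strongly in $H^s(\T)$.

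I expect the only real technical obstacle to be the passage to the limit in the nonlinear term and the accompanying $H^s$ (for $1/2<s<1$) well-posedness and uniqueness theory for \eqref{K11} — in particular the mapping properties of $u\mapsto f(u)$ on $H^s(\T)$, which is why it is convenient to assume a little more smoothness of $f$ than merely $C^1$. Once the profile $\Phi$ is identified, the reduction to Theorem~\ref{thm4} through the identity $a\Phi\equiv0$ is immediate.
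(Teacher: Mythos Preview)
Your proof is correct and follows essentially the same La Salle--type route as the paper: energy dissipation, extraction of a weak $H^1$ limit along a time sequence, identification of the limit profile as a solution with $a\Phi\equiv 0$, and conclusion via Theorem~\ref{thm4}. The only cosmetic difference is that the paper passes to the limit through the Lipschitz continuous-dependence estimate for the solution map in $H^s$ ($1/2<s<1$) rather than via Arzel\`a--Ascoli plus the integral equation, and leaves implicit the step ``$a\Phi\equiv 0$ $\Rightarrow$ $\Phi$ solves the undamped equation'' that you spell out; the technical caveat you flag about $H^s$ well-posedness for merely $C^1$ nonlinearities applies equally to both versions.
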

\begin{proof}
The local wellposedness in $H^s(\T )$ for any $s> 1/2$ is derived from the contraction mapping theorem in much the same way as for Theorem \ref{thmA}. The global wellposedness
in $H^1(\T)$ follows at once from the energy identity
\be
\label{K13}
||u(T)||^2_{H^1 } - ||u_0||^2_{H^1 } + 2\int_0^T\!\!\!\int_{\T} a(x)|u(x,t)| ^2dxdt =0.
\ee
obtained by scaling each term in \eqref{K11} by $u$. On the other hand, still from the application of the contraction mapping theorem, given any $s >1/2$, any $\rho >0$  and any $u_0,v_0\in H^s(\T )$ with
$||u_0||_{H^s(\T )}\le \rho$, $||v_0||_{H^s(\T)}\le \rho$, there is some time $T=T(s,\rho )>0$ such that
the solutions $u$ and $v$ of \eqref{K11}-\eqref{K12} corresponding to the initial data $u_0$ and $v_0$, respectively,  fulfill
\be
\label{K14}
||u - v||_{C([0,T]; H^s(\T ))} \le 2 ||u_0-v_0||_{H^s(\T )}.
\ee
Pick any initial data $u_0\in H^1(\T )$, any $s\in (1/2,1)$, and let $\rho = ||u_0||_{H^1(\T)}$ and $T=T(s,\rho)$. Note that $||u(t)||_{H^1}$ is nonincreasing by \eqref{K13}, hence it has
a nonnegative limit $l$ as $t\to \infty$. Let
$v_0$ be in the $\omega-$limit set of $(u(t))_{t\ge 0}$ in $H^1(\T)$ for the weak topology; that is, for some sequence $t_n\to \infty$ we have $u(t_n)\to v_0$ weakly in $H^1(\T )$. Extracting a subsequence if needed,
we may assume that  $t_{n+1}-t_n\ge T$ for all $n$. From \eqref{K13} we infer that
\be
\label{K15}
\lim_{n\to \infty} \int_{t_n}^{t_{n+1}}\!\!\!\int_{\T} a(x) |u(x,t)|^2 dxdt=0.
\ee
Since $u(t_n)\to v_0$ (strongly) in $H^s(\T )$, and $||u(t_n)||_{H^s(\T )} \le ||u(t_n)||_{H^1(\T )} \le \rho$, we have from \eqref{K14} that
\be
\label{K16}
u(t_n+\cdot ) \to v\qquad \text{ in } C([0,T];H^s(\T ))\quad \text{ as } n\to \infty ,
\ee
where $v=v(x,t)$ denotes the solution of
\begin{eqnarray*}
&&v_t-v_{txx} +[f(v)]_x +a(x)v=0,\qquad x\in \T ,\ t\ge 0, \\
&&v(x,0)= v_0(x).
\end{eqnarray*}
Note that $v\in C([0,T];H^1(\T ))$ for $v_0\in H^1(\T ))$. \eqref{K15} combined to \eqref{K16} yields
\[
\int_{0}^{T}\!\!\!\int_{\T} a(x) |v(x,t)|^2 dxdt=0,
\]
so that $av\equiv 0$. By Theorem \ref{thm4}, 
$v_0=0$ and hence, as $t\to \infty$,
\begin{eqnarray*}
u(t)\to 0  &&\text{weakly in } H^1(\T),\\
u(t)\to  0 &&\text{strongly in } H^s(\T ) \text{ for } s<1.
\end{eqnarray*}
\end{proof}
\subsection{A BBM-like  equation with a nonlocal bilinear term}
Here, we consider a BBM-type equation with  the drift term, but with a nonlocal bilinear term given by a convolution, namely
\be
\label{V10}
u_t-u_{txx} +u_x +  \lambda (u*u)_x=0, \qquad x\in \R,
\ee
where $\lambda\in \R$ is a constant and
\[
(u*v)(x)= \int_{-\infty}^\infty u(x-y)v(y) dy\qquad \text{ for } x\in \R.
\]
A UCP can be derived without any restriction on the initial data.
\begin{theorem}
\label{thm5}
Assume that $\lambda  \ne 0$. Let $u\in C^1([0,T]; H^1( \R ))$ be a solution of \eqref{V10} such that
\be
u(x,t) = 0 \qquad \text{ for } |x|> L,\ t\in (0,T). \label{V11}
\ee
Then $u\equiv 0$.
\end{theorem}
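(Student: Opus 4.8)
The plan is to adapt Bourgain's complex-analytic argument: the convolution nonlinearity in \eqref{V10} Fourier-transforms into an ordinary product, so that the equation becomes, frequency by frequency, a scalar Riccati ODE that can be integrated explicitly, and the symbol $-i\xi/(1+\xi^2)$ of the linear part has poles at $\xi=\pm i$, which will be the source of the rigidity. Concretely: from \eqref{V11} and $u\in C^1([0,T];H^1(\R))$ the function $u(\cdot,t)$ is supported in $[-L,L]$ for every $t\in(0,T)$, so its Fourier transform $\hat u(\xi,t)=\int_{-L}^{L}u(x,t)e^{-ix\xi}\,dx$ extends to an entire function of $\xi\in\C$ (Paley--Wiener), and $t\mapsto\hat u(\xi,t)$ is of class $C^1$ for each fixed $\xi$. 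Taking the Fourier transform in $x$ of \eqref{V10}, using $\widehat{u*u}=\hat{u}^2$ and $1+\xi^2>0$ for real $\xi$, one obtains, for every $\xi\in\R$ and $t\in(0,T)$,
\[
\partial_t\hat u(\xi,t)=\mu(\xi)\,\hat u(\xi,t)\bigl(1+\lambda\hat u(\xi,t)\bigr),\qquad
\mu(\xi):=\frac{-i\xi}{1+\xi^2}=\frac1{2i}\Bigl(\frac1{\xi-i}+\frac1{\xi+i}\Bigr).
\]

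Next I would integrate this Riccati ODE. Fix $t_0<t_1$ in $(0,T)$. Since $0$ and $-1/\lambda$ are equilibria, a trajectory (at a fixed real $\xi$) that starts away from them stays away from them, and along it $g(\xi,t):=\hat u(\xi,t)/(1+\lambda\hat u(\xi,t))$ is well defined and satisfies $\partial_t g=\mu(\xi)g$, whence $g(\xi,t_1)=g(\xi,t_0)e^{\mu(\xi)(t_1-t_0)}$. Clearing denominators gives
\[
\hat u(\xi,t_1)\bigl(1+\lambda\hat u(\xi,t_0)\bigr)=\hat u(\xi,t_0)\bigl(1+\lambda\hat u(\xi,t_1)\bigr)\,e^{\mu(\xi)(t_1-t_0)},
\]
and this in fact holds for \emph{all} $\xi\in\R$, since at the exceptional frequencies where $\hat u(\xi,t_0)\in\{0,-1/\lambda\}$ the function $\hat u(\xi,\cdot)$ is constant (uniqueness for the ODE) and both sides of the identity vanish. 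The left-hand side is entire in $\xi$ and the exponential is holomorphic on $\C\setminus\{\pm i\}$, so by the identity theorem the displayed identity extends to all $\xi\in\C\setminus\{\pm i\}$.

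Finally I would argue by contradiction. Suppose $u\not\equiv0$; choose $t_0\in(0,T)$ with $\hat u(\cdot,t_0)\not\equiv0$ and any $t_1\in(t_0,T)$. By Riemann--Lebesgue $\hat u(\cdot,t_1)\not\equiv-1/\lambda$, so the entire function $\hat u(\cdot,t_0)\bigl(1+\lambda\hat u(\cdot,t_1)\bigr)$ is not identically zero and
\[
\Phi(\xi):=\frac{\hat u(\xi,t_1)\bigl(1+\lambda\hat u(\xi,t_0)\bigr)}{\hat u(\xi,t_0)\bigl(1+\lambda\hat u(\xi,t_1)\bigr)}
\]
is a ratio of entire functions, hence meromorphic on all of $\C$. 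On the other hand, on a punctured neighbourhood of $\xi=i$ we have $\Phi(\xi)=e^{\mu(\xi)(t_1-t_0)}$, and writing $\mu(\xi)=\frac1{2i(\xi-i)}+\psi(\xi)$ with $\psi$ holomorphic near $i$ shows that $e^{\mu(\xi)(t_1-t_0)}$ has an \emph{essential} singularity at $\xi=i$ (as $t_1-t_0>0$). A function meromorphic at $i$ cannot agree, on a punctured neighbourhood of $i$, with one having an essential singularity there; this contradiction forces $\hat u(\cdot,t_0)\equiv0$ for every $t_0\in(0,T)$, i.e.\ $u\equiv0$.

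The main difficulty is bookkeeping rather than analysis: justifying the Fourier-transform manipulations and the $C^1$ dependence of $\hat u(\xi,\cdot)$, checking that the cleared identity holds at \emph{every} real frequency (so that the extension to $\C\setminus\{\pm i\}$ is unconditional), and phrasing the ``meromorphic versus essential singularity'' clash at $\xi=i$ precisely --- for instance through the Laurent expansion of $e^{c/(\xi-i)}$, or by observing that $|e^{\mu(\xi)(t_1-t_0)}|$ outgrows every power of $|\xi-i|^{-1}$ along a suitable sequence $\xi_n\to i$. Note that, in contrast with the genuine BBM equation, no estimate at high frequencies enters: it is the poles of the symbol $\mu$ at $\pm i$, not the size of $\hat u$, that do the work.
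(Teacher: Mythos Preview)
Your proof is correct and takes a genuinely different route from the paper's. The paper does not integrate the Riccati ODE in time; instead it differentiates the relation $(1+\xi^2)\hat u_t=-i\xi(\hat u+\lambda\hat u^2)$ repeatedly in $\xi$ and evaluates at $\xi=i$, where the factor $1+\xi^2$ kills the left-hand side. This forces $\hat u(i,t)\in\{0,-1/\lambda\}$, and an induction on the order of differentiation then shows $\partial_\xi^k\hat u(i,t)=0$ for every $k\ge 1$; the constant alternative $-1/\lambda$ is ruled out because $\hat u(\cdot,t)\in L^2(\R)$, so all Taylor coefficients of the entire function $\hat u(\cdot,t)$ at $i$ vanish and $u\equiv 0$.

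Your argument replaces this inductive computation by a single global statement: after clearing denominators, the time-integrated Riccati identity equates an entire function to an entire function times $e^{\mu(\xi)(t_1-t_0)}$, and the essential singularity of the exponential at $\xi=i$ collides with the meromorphy of a ratio of entire functions. This is cleaner conceptually and avoids the induction, at the price of the bookkeeping you already flag (checking the cleared identity at the exceptional real frequencies, and making sure the denominator does not vanish identically so that $\Phi$ is genuinely meromorphic). The paper's approach, by contrast, never leaves the single point $\xi=i$ and needs no case analysis on the zeros of $\hat u(\cdot,t_0)(1+\lambda\hat u(\cdot,t_1))$; it is more pedestrian but also more self-contained. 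Both proofs exploit exactly the same mechanism---the simple pole of $-i\xi/(1+\xi^2)$ at $\xi=i$---and neither needs any high-frequency estimate.
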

\begin{proof}
Taking the Fourier transform of each term in \eqref{V10} yields
\be
(1+\xi ^2 ) \hat u_t = -i\xi (\hat u + \lambda  \hat{u} ^2 ), \qquad \xi \in\R,\  t\in (0,T).\label{V12}
\ee
Note that, for each $t\in (0,T)$, $\hat u(.,t)$ and $\hat u_t(.,t)$ may be extended to $\C$ as  entire functions of exponential type at most $L$.
Furthermore, \eqref{V12} is still true for $\xi\in\C$ and $t\in (0,T)$ by analytic continuation.
To prove that $u\equiv 0$, it is sufficient to check that
\be
\partial _\xi ^k \hat u (i,t) = 0 \qquad \forall k\in \N,\ \forall t\in (0,T).
\label{V13}
\ee
Let us prove \eqref{V13} by induction on $k$. First, we see that \eqref{V12} gives that either
\be
\hat u (i,t) =0 \qquad \forall t \in (0,T), \label{V14}
\ee
or
\be
\hat u (i,t) =- \lambda ^{-1}\qquad \forall t\in (0,T). \label{V15}
\ee
Derivating with respect to $\xi$ in \eqref{V12} yields (the upperscript denoting the order of derivation in $\xi$)
\be
2\xi \hat u_t (\xi, t) + (1+\xi ^2) \hat u_t ^{(1)} (\xi , t)  = -i \hat u (\xi, t) (1+  \lambda \hat u (\xi, t))  -i\xi \hat u ^{(1)} (\xi , t ) (1 + 2 \lambda \hat u (\xi , t )).
\label{V150}
\ee
Note that $\hat u_t(i,t)=0$ if either \eqref{V14} or \eqref{V15} hold. Combined with \eqref{V150}, this gives
$$
{\hat u} ^{(1)} (i,t)=0,\qquad t\in (0,T).
$$
Assume now that, for some $k\ge 2$,
\be
 \hat u^{(l)}(i,t)=0 \text{ for } t\in (0,T)\text{  and any } l\in \{1,...,k-1 \} .\label{V16}
\ee
Derivating $k$ times with respect to $\xi$ in \eqref{V12} yields
\begin{multline}
(1+\xi ^2) \hat u _t^{(k)} + 2k \xi \hat u _t ^{(k-1)}   + k(k-1)\hat u_t^{(k-2)}= -i\xi \big(\hat u ^{(k)} + \lambda \sum _{l=0} ^k C_k^l \hat u ^{(l)}
\hat u ^{(k-l)}\big)   \\
 -ik \big( \hat u ^{(k-1)}   + \lambda \sum_{l=0}^{k-1} C_{k-1}^l \hat u ^{(l)}\hat u ^{(k-1-l)} \big) . \label{V17}
\end{multline}
From \eqref{V16} and \eqref{V17} we infer that
\[
\hat u ^{(k)}  (i,t) (1+2 \lambda \hat u (i,t)) =0.
\]
Combined to \eqref{V14} and \eqref{V15}, this yields
\[
\hat u ^{(k)} (i,t)=0.
\]
Thus
\be
\hat u ^{(k)} (i,t) =0\qquad \forall k\ge 1. \label{V20}
\ee
\eqref{V15} and \eqref{V20} would imply
\[
\hat u (\xi , t ) =-\lambda  ^{-1} \qquad \forall \xi\in\C ,
\]
which contradicts the fact that $\hat u (.,t)\in L^2(\R )$.  Thus \eqref{V14} holds and $u\equiv 0$.
\end{proof}
\section{Unique continuation property for the KdV-BBM equation}
\label{sec:UCPKdVBBM}
In this section we prove some UCP for the following  KdV-BBM equation
\be
u_t - u_{txx} -cu_{xxx} + qu_x=0,\qquad x\in \T,\ t\in (0,T), \label{E1}
\ee
where $q\in L^\infty (0,T;L^\infty( \T ))$ is a given potential function and $c\ne 0$ is a given real constant. The UCP obtained here will be used in the next section
to obtain a semiglobal exponential stabilization result for BBM with a moving damping.
\begin{theorem}
\label{thm20}
Let $c\in \R \setminus \{ 0\}$, $T>2\pi /|c|$, and $q\in L^\infty (0,T;L^\infty (\T ))$. Let $\omega \subset \T$ be a nonempty open set.
Assume that $u\in L^2(0,T; H^2(\T ))$ satisfies \eqref{E1} and
\be
u(x,t) =0 \qquad \text{for a.e. } (x,t) \in \omega \times (0,T). \label{E2}
\ee
Then $u\equiv 0$.
\end{theorem}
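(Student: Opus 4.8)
The plan is to rewrite \eqref{E1} as a system consisting of an elliptic equation in $x$ and a transport equation along the lines of slope $c$ in the $(x,t)$-plane. Since
\[
u_t-u_{txx}-cu_{xxx}+qu_x=(1-\partial_x^2)(u_t+cu_x)-(c-q)u_x ,
\]
I would set $v:=u_t+cu_x$, so that \eqref{E1} becomes equivalent to
\[
(1-\partial_x^2)v=(c-q)u_x ,\qquad u_t+cu_x=v \qquad \text{in } \T\times(0,T).
\]
Differentiating the transport equation in $x$ gives in addition $(\partial_t+c\partial_x)u_x=v_x$, so that $u_x$ also solves a transport equation. From $u\in L^2(0,T;H^2(\T))$ and \eqref{E1} one gets $u_t\in L^2(0,T;H^1(\T))$, hence $v\in L^2(0,T;H^1(\T))$ and then $v\in L^2(0,T;H^3(\T))$ by elliptic regularity; and \eqref{E2} forces $u=u_x=v=v_x=0$ a.e.\ on $\omega\times(0,T)$. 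The geometric point is that the characteristics $s\mapsto(x_0+c(s-s_0)\bmod 2\pi,\,s)$ of $\partial_t+c\partial_x$ wind around the torus at the fixed speed $|c|$; because $|c|T>2\pi$, every characteristic restricted to $s\in(0,T)$ sweeps all of $\T$ in $x$, and in particular meets $\omega\times(0,T)$. This is exactly what makes an observation on an arbitrarily thin $\omega\times(0,T)$ sufficient.

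\textbf{Carleman estimates with a common singular weight and the absorption argument.} Next I would build a weight $\varphi=\varphi(x,t)$ on $\T\times(0,T)$, singular near $t=0$ and $t=T$ (so that no boundary condition is needed at the endpoints of the time interval), chosen to be simultaneously a Carleman weight for the second-order operator $\partial_x^2$ (with $t$ frozen) and for the first-order transport operator $\partial_t+c\partial_x$ --- this is the ``same singular weights for both equations'' in the roadmap. With such a $\varphi$ one proves, for $\tau\geq\tau_0$ large, an elliptic estimate of the form
\[
\int e^{2\tau\varphi}\big(\tau^3|v|^2+\tau|v_x|^2\big)\;\lesssim\;\int e^{2\tau\varphi}\,\big|(1-\partial_x^2)v\big|^2\;+\;\int_{\omega\times(0,T)} e^{2\tau\varphi}\big(|v|^2+|v_x|^2\big),
\]
together with a transport estimate for $u$ and for $u_x$,
\[
\tau\int e^{2\tau\varphi}\big(|u|^2+|u_x|^2\big)\;\lesssim\;\int e^{2\tau\varphi}\big(|(\partial_t+c\partial_x)u|^2+|(\partial_t+c\partial_x)u_x|^2\big)\;+\;\int_{\omega\times(0,T)} e^{2\tau\varphi}\big(|u|^2+|u_x|^2\big).
\]
Inserting the system, $(\partial_t+c\partial_x)u=v$, $(\partial_t+c\partial_x)u_x=v_x$ and $(1-\partial_x^2)v=(c-q)u_x$ with $q\in L^\infty$, adding the two inequalities lets the elliptic estimate swallow the $v$- and $v_x$-terms coming from the transport estimate, while the coupling term obeys $\int e^{2\tau\varphi}|(c-q)u_x|^2\lesssim\|q\|_{L^\infty}^2\int e^{2\tau\varphi}|u_x|^2$ and is absorbed by the left-hand side once $\tau$ is large. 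The only surviving terms are supported in $\omega\times(0,T)$, where $u=u_x=v=v_x=0$, so they vanish; letting $\tau\to\infty$ yields $u_x\equiv0$ on $\T\times(0,T)$, hence $v=u_t+cu_x\equiv0$ forces $u_t\equiv0$, and since $u=0$ on $\omega$ we conclude $u\equiv0$.

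\textbf{Regularization.} Since $u$ is only $L^2(0,T;H^2(\T))$ (and not smooth enough in $t$ to integrate by parts against weighted exponentials), I would first mollify $u$ in $x$ and $t$, apply the two Carleman inequalities to the regularized functions, control the commutators of the mollifier with $1-\partial_x^2$, with $\partial_t+c\partial_x$ and with multiplication by $q\in L^\infty$, and then pass to the limit; this is the ``regularization process'' mentioned in the introduction, and it is routine once the estimates are available.

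\textbf{Main obstacle.} The delicate step is the construction of the single weight $\varphi$ that is an admissible Carleman weight for both operators at once, with compatible powers of $\tau$. The first-order (transport) estimate gains only one power of $\tau$, whereas the elliptic equation re-injects $u_x$ through $(c-q)u_x$; the bookkeeping of $\tau$-powers across the two estimates must be arranged so that $\|q\|_{L^\infty}^2\int e^{2\tau\varphi}|u_x|^2$ is strictly dominated and absorbable, and so that the ``small'' set of $\varphi$ can be squeezed inside an arbitrarily thin $\omega\times(0,T)$ --- which is possible precisely because the transport part carries information at speed $c$ and, when $T>2\pi/|c|$, that information reaches all of $\T$ within $(0,T)$. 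Handling the endpoint singularities of $\varphi$ at $t=0,T$, the unavoidable critical point of $\varphi(\cdot,t)$ on the circle (which must be parked inside $\omega$), and the toroidal geometry are the remaining technical nuisances, but the crux is the existence of the common weight.
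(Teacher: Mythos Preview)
Your strategy is correct and matches the paper's: split \eqref{E1} into a coupled elliptic--transport system, prove Carleman estimates for each piece with a common weight, and close by absorption for large parameter. The geometric role of $T>2\pi/|c|$ is identified correctly.

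However, your decomposition is \emph{dual} to the paper's. The paper sets $w:=u-u_{xx}$ and obtains
\[
u-u_{xx}=w\quad(\text{elliptic in }u),\qquad w_t+cw_x=(c-q)u_x\quad(\text{transport in }w),
\]
whereas you set $v:=u_t+cu_x$ and obtain
\[
(1-\partial_x^2)v=(c-q)u_x\quad(\text{elliptic in }v),\qquad u_t+cu_x=v\quad(\text{transport in }u).
\]
Both couplings close: in the paper's bookkeeping the elliptic estimate gives $(s\theta)|u_x|^2+(s\theta)^3|u|^2$ against $|u|^2+|w|^2$ (from $|u_{xx}|^2\le 2|u|^2+2|w|^2$) and the transport estimate gives $(s\theta)|w|^2$ against $|(c-q)u_x|^2$; in yours the elliptic estimate gives $\tau^3|v|^2+\tau|v_x|^2$ against $|(c-q)u_x|^2$ and the transport estimate gives $\tau(|u|^2+|u_x|^2)$ against $|v|^2+|v_x|^2$. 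Either way the large-parameter term wins. Your version has the mild advantage that $u,u_x\in H^1(\T\times(0,T))$ and $v\in L^2(0,T;H^2(\T))$ directly under the hypothesis $u\in L^2(0,T;H^2(\T))$, so the regularization step you sketch is in fact not needed here (the paper's density argument is needed because its transported quantity $w$ is only $L^2$); conversely the paper's choice keeps the elliptic estimate on $u$ itself, which is convenient when one later relaxes to $u\in L^\infty(0,T;H^1(\T))$ via time-averaging.

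Two small corrections. First, elliptic regularity gives $v\in L^2(0,T;H^2(\T))$, not $H^3$, since $(c-q)u_x$ is only $L^2$ in $x$; this is all you need. Second, the paper's weight is singular only at $t=0$ (with $g\equiv 1$ on $[\delta,T]$) and the boundary term at $t=T$ is controlled by a sign condition equivalent to the pseudoconvexity condition \eqref{abcd}, which is exactly where $T>2\pi/|c|$ enters quantitatively. A weight blowing up at both ends, as you propose, can also be made to work, but you then have to design $\psi$ so that the transport pseudoconvexity $-(\varphi_{tt}+2c\varphi_{xt}+c^2\varphi_{xx})>0$ survives the additional singular layer near $t=T$; this is the concrete place where your ``main obstacle'' lives.
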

\begin{proof}
Let $w=u-u_{xx}\in L^2(0,T;L^2(\T ))$. Then $(u,w)$ solves the following system
\ba
&&u-u_{xx} = w, \label{E3}\\
&&w_t + cw_x = (c-q)u_x. \label{E4}
\ea
Note that, by \eqref{E2},
\be
\label{E20}
u=w=0 \qquad \text{ a.e. on } \omega \times (0,T).
\ee
Inspired in part by \cite{AT} (which was concerned with a heat-wave system\footnote{See also \cite{EGGP} for some Carleman estimates for a coupled system of parabolic-hyperbolic equations.}),
we shall establish some Carleman estimates for the elliptic equation \eqref{E3} and the transport equation \eqref{E4} with the {\em same singular weight}.
Introduce a few notations. We identify $\T$ with $(0,2\pi )$. Without loss of generality, we can assume that $c>0$ (the case $c<0$ being similar), and that
$\omega =(2\pi -2\eta, 2\pi + \eta)$ for some $\eta >0$. Let $\omega _0 = (2\pi -\eta , 2\pi )\subset \omega$.
Pick a time $T>2\pi / c$, and some positive numbers $\delta$, $\varepsilon$ and $\rho <1$ such that
\be
\label{abcd}
\rho Tc -2\rho \delta  c - 2 \pi    + \eta -\varepsilon >0.
\ee
Pick finally a function  $g\in C^\infty (0,T)$ such that
\[
g(t) = \left\{
\begin{array}{ll}
\frac{1}{t} \qquad &\text{for } 0<t<\delta /2 , \\
\text{\rm strictly decreasing}\qquad &\text{for }  0<t \le  \delta ,\\
1 \qquad &\text{for } \delta \le t <T.
\end{array}
\right.
\]
Let $\psi \in C^\infty (\T \times [0,T])$ (i.e. $\psi$ is $C^\infty$ smooth in $(x,t)$ and $\psi (.,t)$ is $2\pi$-periodic in $x$ for all $t\in [0,T]$)
with
\[
\psi (x,t) = (x+\varepsilon ) ^2 - \rho c^2 (t-2\delta )^2\qquad \text{ for } x\in [0,2\pi -\eta ],\ t\in [0,T].
\]
Let finally
\[
\begin{array}{rll}
\varphi (x,t) &= g(t) (2e^{||\psi ||_{L^\infty}} - e^{\psi (x,t)}), \qquad &(x,t)\in \T\times (0,T],\\
\theta (x,t) &= g(t) e^{\psi (x,t)},\qquad &(x,t)\in \T \times (0,T],
\end{array}
\]
where $||\psi ||_{L^\infty}= ||\psi ||_{L^\infty (\T \times (0,T)) }$.
The proof of Theorem \ref{thm20} is outlined as follows. In the first step, we prove a Carleman estimate for the elliptic equation
\eqref{E3} with the time-varying weight $\varphi$. In the second step, we prove a Carleman estimate for the transport equation \eqref{E4}
with the same weight. In the last step, we combine the two above Carleman estimates into a single one for \eqref{E1} and derive the UCP. \\[5mm]
{\sc Step 1. Carleman estimate for the elliptic equation.}
\begin{lemma}
\label{lem1}
There exist $s_0\ge 1$ and $C_0>0$ such that for all $s\ge s_0$ and all
$u\in L^2(0,T;H^2(\T ))$, the following  holds
\begin{multline}
\int_0^T\!\!\!\int_{\T} [  (s\theta ) |u_x|^2 + (s\theta )^3 |u|^2    ]e^{-2s\varphi} dxdt \\
 \le C_0 \left( \int_0^T\!\!\!\int_{\T} |u_{xx}|^2 e^{-2s\varphi} dxdt + \int_0^T \!\!\! \int_{\omega}  (s\theta )^3 |u|^2 e^{-2s\varphi} dxdt \right).  \label{E100}
\end{multline}
\end{lemma}
\noindent
\begin{remark}
The same Carleman estimate as above with terms integrated over $\T$ only is also valid, but with some constants $C_0$ and $s_0$ that could
a priori depend on $t$. The above formulation was preferred for the sake of clarity.
\end{remark}
\noindent
{\em Proof of Lemma \ref{lem1}:} Let $v=e^{-s\varphi}u$ and $P=\partial_x^2$. Then
\[
e^{-s\varphi} Pu = e^{-s\varphi } P(e^{s\varphi } v ) = P_s v + P_av
\]
where
\begin{eqnarray}
P_s v &=&   (s\varphi _x)^2  v + v_{xx}, \\
P_a v &=&  2s \varphi _x v_x + s \varphi _{xx} v
\end{eqnarray}
denote the (formal) selfadjoint and skeweadjoint parts of $e^{-s\varphi } P(e^{s\varphi} \cdot )$.
It follows that
\[
||e^{-s\varphi} Pu||^2 = ||P_s v||^2 + ||P_a v||^2 +  2(P_s v,P_a v)
\]
where $(f,g)=\int_0^T\!\!\!\int_{\T} fg dxdt$, $|| f ||^2 = (f,f)$. In the sequel, $\int_0^T\!\!\!\int_{\T}f(x,t) dxdt $ is denoted $\int\!\!\!\int f $, for the sake of shortness.
Then
\begin{multline*}
(P_s v,P_a v) = \big(  (s\varphi _x)^2 v, 2s\varphi _x v_x\big) + \big(  (s\varphi _x)^2 v, s \varphi _{xx} v \big) \\
+\big( v_{xx}, 2s \varphi _x v_x \big) + (v_{xx}, s\varphi _{xx} v) = : I_1 + I_2 + I_3 + I_4.
\end{multline*}
After some integrations by parts in $x$, we obtain that
\begin{eqnarray*}
&&I_1 =  -3  \ii (s\varphi _x)^2 s\varphi _{xx} v^2 \\
&&I_3 = - \ii s\varphi _{xx} v_x^2 \\
&& I_4 = -\ii v_x (s\varphi _{xxx} v + s\varphi _{xx}v_x) =\ii s\varphi _{xxxx}\frac{v^2}{2} - \ii s\varphi _{xx} v_x^2.
\end{eqnarray*}
Therefore
\[
||e^{-s\varphi } Pu||^2 = ||P_s v|| ^2 + ||P_a v||^2 + \ii [-4(s\varphi _x )^2 s\varphi _{xx} +s\varphi _{xxxx}]v^2 +\ii (-4s\varphi_{xx})v_x^2.
\]
We notice that
\[
\varphi _x = -g \psi _x e^{\psi} , \qquad \varphi _{xx}= -g [(\psi _x)^2 +\psi _{xx}]e^{ \psi},
\]
hence there exist some numbers $s_0\ge 1$, $C>0$ and $C'>0$ such that for all $s\ge s_0$
\[
\begin{array}{rll}
-4(s\varphi _x)^2s\varphi _{xx} + s\varphi _{xxxx}  &\ge C   (sg)  ^3\qquad &\text{ for } (x,t) \in (0,2\pi -\eta )\times (0,T), \\
-4s \varphi _{xx} &\ge  Csg\qquad &\text{ for } (x,t) \in (0,2\pi -\eta )\times (0,T),
\end{array}
\]
while
\[
\begin{array}{rll}
|-4(s\varphi _x)^2s\varphi _{xx} + s\varphi _{xxxx}|  &\le C'   (sg)  ^3\qquad &\text{ for } (x,t) \in (2\pi -\eta , 2 \pi)\times (0,T), \\
|4s \varphi _{xx}| &\le  C'sg\qquad &\text{ for } (x,t) \in (2\pi -\eta , 2\pi )\times (0,T).
\end{array}
\]
We conclude that for $s\ge s_0$ and some constant $C_0>0$
\be
||P_s v||^2 +  \ii [  sg  |v_x|^2 +   (sg)^3|v|^2]
\le C_0 \left(
||e^{-s\varphi }Pu||^2 + \int_0^T\!\!\!\int_{\omega _0}  [ sg  |v_x|^2 +  (sg )^3 |v|^2 ] \right). \label{E30}
\ee
Next we show that $\ii (sg)^{-1} |v_{xx}|^2$ is also less than the r.h.s. of \eqref{E30}. We have
\begin{eqnarray*}
\ii (sg)^{-1} |v_{xx}|^2
&\le & \ii (sg)^{-1} |P_s v - (s\varphi _x)^2 v|^2 \\
&\le& 2 \ii  (sg)^{-1} \big( |P_sv|^2 + |s\varphi _x|^4 |v|^2 \big) \\
&\le& C\left( s^{-1} ||P_sv||^2 + \ii  (sg)^3|v|^2  \right).
\end{eqnarray*}
Combined to \eqref{E30}, this gives
\begin{multline}
\ii \{ (s g )^{-1} |v_{xx}|^2 +  (sg) |v_x|^2 + (sg)^3 |v|^2  \} \\
\le C\left( ||e^{-s\varphi} Pu||^2 +\int_0^T\!\!\!\int_{\omega  _0}  (sg )^3 |v|^2 +
\int_0^T\!\!\! \int_{\omega _0} sg |v_x|^2   \right)
\label{E32}
\end{multline}
where $C$ does not depend on $s$  and $v$.
Finally, we show that we can drop the last term in the r.h.s. of \eqref{E32}. Let $\xi\in C^\infty_0(\omega )$ with $0\le \xi \le 1$ and
$\xi (x) = 1$ for $x\in \omega _0$. Then
\begin{eqnarray*}
\int_0^T\!\!\!\int_{\omega _0} g |v_x|^2
&\le& \int_0^T\!\!\! \int_{\omega} g \xi |v_x|^2 \\
&\le& -\int_0^T\!\!\!\int_{\omega } g ( \xi _x v_x + \xi v_{xx})v \\
&\le& \frac{1}{2}\int_0^T\!\!\!\int_{\omega} g \xi _{xx}  v^2
 -\int_0^T\!\!\!\int_\omega g \xi v_{xx} v
\end{eqnarray*}
so that
\be
2\int_0^T\!\!\!\int_{\omega _0}   sg  |v_x|^2
\le 
||\xi _{xx}||_{ L^\infty (\T) } \int_0^T\!\!\!\int_{\omega}  (sg )|v|^2 + \kappa \int_0^T\!\!\!\int_{\omega} (sg)^{-1}|v_{xx}|^2
+\kappa ^{-1} \int_0^T\!\!\!\int_{\omega}   (sg)^3 |v|^2
 \label{E33}
\ee
where $\kappa >0$ is a constant that can be chosen as small as desired.
Combining \eqref{E32} and \eqref{E33} with $\kappa$ small enough gives for $s\ge s_0$ (with a possibly increased value of $s_0$)
and some constant $C$ (that does not depend on $s$ and $v$)
\be
\ii\{ (sg )^{-1}|v_{xx}|^2 +  (sg )|v_x|^2 +  (sg)^3 |v|^2  \}
\le C \left( ||e^{-s\varphi}  Pu||^2  +\int_0^T\!\!\!\int_{\omega  }  (sg )^3 |v|^2  \right). \label{E34}
\ee
Replacing $v$ by $e^{-s\varphi}u$ in \eqref{E34} gives at once \eqref{E100}. The proof of Lemma \ref{lem1} is complete.  \qed
\\

\noindent
{\sc Step 2. Carleman estimate for the transport equation.}\\
The functions $g,\psi,\varphi$ and $\theta$ are the same as those in Lemma \ref{lem1}.
\begin{lemma}
\label{lem2}
There exist $s_1\ge s_0$ and $C_1>0$ such that for all $s\ge s_1$ and
all $w\in H^1(\T \times (0,T))$, the following  holds
\be
\int_0^T\!\!\!\int_{\T} ( s\theta ) |w|^2 e^{-2s\varphi }dxdt
\le C_1\left(
\int_0^T\!\!\!\int_{\T} |w_t + cw_x|^2 e^{-2s\varphi} dxdt + \int_0^T\!\!\!\int_{\omega} ( s\theta )^2  |w|^2 
e^{-2s\varphi} dxdt
\right). \label{E50}
\ee
\end{lemma}
\noindent
{\em Proof of Lemma \ref{lem2}:} The proof is divided into two parts corresponding to the estimates for $t\in [0,\delta ]$ and  for $t\in [\delta ,T ]$. The main result
in each part is stated in a claim.  Let $v=e^{-s\varphi} w$ and $P=\partial _t + c\partial _x$. Then
\begin{eqnarray*}
e^{-s\varphi } Pw &=& e^{-s\varphi} P (e^{s\varphi } v) \\
&=&(s\varphi _t v + cs\varphi _x v) + (v_t + c v_x) \\
&=:& P_s v + P_a v.
\end{eqnarray*}
\noindent
{\sc Claim 4.}
\begin{multline}
\int_0^\delta \!\!\! \int_{\T} ( s\theta ) ^2  |v|^2  dxdt \\
\le
C\left( \int_0^\delta\!\!\! \int_{\T} 
| e^{-s\varphi} Pw|^2 dxdt +
\int_{\T} (1-\xi ) ^2 ( s\theta ) |v|^2_{\vert t=\delta }dx + \int_0^\delta \!\!\!\int_{\omega} ( s\theta ) ^2 
 |v|^2 dxdt   \right). \label{F0}
\end{multline}
To prove the claim, we compute in several ways
\[
I=\int_0^\delta \!\!\! \int_{\T} (e^{-s\varphi} Pw)(1-\xi )^2  s\theta v \, dxdt.
\]
We split $I$ into
\[
I = \int_0^\delta \!\!\!\int_{\T} (P_sv) (1-\xi )^2  s \theta v \, dxdt + \int_0^\delta \!\!\! \int_{\T} (P_a v) (1-\xi )^2  s\theta v \, dxdt =: I_1 + I_2.
\]
Then
\begin{eqnarray*}
I_1 &=& \int_0^\delta \!\!\! \int_{\T}  (\varphi _t + c \varphi _x)(1-\xi )^2  s^2 \theta v^2 \, dxdt \\
&=& \int_0^\delta \!\!\! \int_{\T} [g' (2 e^{ ||\psi||_{L^\infty}} -e^{ \psi}) -g  (\psi _t + c\psi _x) e^{ \psi} ](1-\xi)^2  s^2 g e^{ \psi } v^2\, dxdt.
\end{eqnarray*}
On the other hand
\begin{eqnarray*}
I_2 &=& \int_0^\delta\!\!\! \int_{\T} (v_t+cv_x)(1-\xi ) ^2 ( s g e^{ \psi} v) \, dxdt\\
&=& -\int_0^\delta \!\!\! \int_{\T}  s [g'e^{ \psi} + g  (\psi _t +c\psi _x) e^{ \psi}] (1-\xi )^2 \frac{v^2}{2} dxdt \\
&&\qquad + \frac{1}{2} \int_{\T} (1-\xi) ^2  sg e^{ \psi} |v|^2 _{\vert t=\delta} dx  +
\int_0^\delta \!\!\! \int_{\T} c  s \xi _x (1-\xi ) g e^{ \psi}v^2 dxdt
\end{eqnarray*}
where we used the fact that $e^{-s\varphi} = O(e^{-C/t})$ as $t\to 0^+$ for some constant $C>0$.
Note that for $x\in [0, 2\pi -\eta ] $ and $t\in (0,\delta )$
\[
\psi _t + c\psi _x = 2c (x+\varepsilon ) -2\rho c^2 (t-2\delta ) >2c (\varepsilon + \rho c\delta ) >0
\]
while
\[
g'(t)\le 0\  \text{ and } \ g(t) \ge 1.
\]
Thus, for $s\ge s_1\ge s_0$,
\begin{eqnarray*}
g (\psi _t + c\psi _x) e^{ \psi} ( s^2 g e^{ \psi }+ \frac{ s }{2})  \ge 2c (\varepsilon + \rho c\delta )
 ( sg)^2 e^{2 \psi} ,\quad &&x\in \T\setminus \omega , \ t\in (0,\delta )\\
-g'(t)\left( (2e^{ \Vert \psi\Vert _{L^\infty}}  - e^{ \psi } )  s^2 g e^{ \psi} -\frac{ s}{2} e^{ \psi}  \right) \ge 0\quad &&x\in \T, \ t\in (0,\delta ).
\end{eqnarray*}
It follows that for some positive constants $C,C'$
\be
C\int_0^\delta\!\!\!\int_{\T} ( s\theta )^2  |v|^2dxdt \le -I  + \frac{1}{2}\int_{\T} (1- \xi )^2   sg e^{ \psi} |v|^2 _{\vert t=\delta} dx
+C'  \int_0^\delta\!\!\! \int_{\omega} ( s\theta ) ^2  |v|^2dxdt. \label{F1}
\ee
On the other hand, by Cauchy-Schwarz inequality, we have for any $\kappa >0$
\be
\vert I \vert  \le \kappa ^{-1} \int_0^\delta \!\!\! \int_{\T}  |e^{-s\varphi } Pw|^2dxdt + \kappa \int_0^\delta \!\!\! \int_{\T}  ( s \theta  )^2  |v|^2dxdt. \label{F2}
\ee
Combining \eqref{F1} with \eqref{F2}  gives \eqref{F0} for $\kappa >0$ small enough. Claim 4 is proved. \\
{\sc Claim 5.}
\begin{multline}
\int_\delta^T\!\!\! \int_{\T} ( s\theta ) |v|^2dxdt + \int_{\T} (1-\xi)^2( s\theta) |v|^2_{\vert t=\delta} dx
+ \int_{\T} (1-\xi)^2( s\theta) |v|^2_{\vert t=T} dx \\
\le C \left( \int_\delta ^T \!\!\! \int_{\T} |e^{-s\varphi} Pw|^2 dxdt + \int_\delta ^T \!\!\! \int_{\omega} ( s\theta ) |v|^2  dxdt  \right).
\end{multline}
$\Vert\cdot\Vert$ and $(.,.)$ denoting here the Euclidean norm and scalar product in $L^2(\T \times (\delta ,T))$, we have that
\be
\label{XYZ0}
||e^{-s\varphi} Pw||^2 \ge ||P_s v + P_a v ||^2 \ge ||(1-\xi )(P_s v + P_a v)||^2 \ge 2 ( (1-\xi ) P_s v, (1-\xi )P_a v).
\ee
Next we compute
\begin{eqnarray}
((1-\xi)P_sv,(1-\xi)P_av) &=& \int_\delta ^T \!\!\! \int_{\T} (1-\xi )^2 s (\varphi _t + c\varphi _x) v(v_t + cv_x)  \, dxdt     \nonumber\\
&=&
-\frac{s}{2}\int_\delta^T\!\!\! \int_{\T} (1-\xi )^2(\varphi _{tt} + 2c \varphi _{xt} + c^2 \varphi _{xx})v^2 dxdt  \nonumber \\
 &&
 + \int_{\T} (1-\xi )^2 s(\varphi _t + c\varphi _x) \frac{v^2}{2}dx \bigg\vert _{\delta}^T
 +\int_\delta^T\!\!\! \int_{\T} c\xi _x (1-\xi ) s(\varphi _t + c\varphi _x) v^2 dxdt.\qquad \qquad  \label{XYZ1}
\end{eqnarray}
Recall that $\xi \in C_0^\infty (\omega )$  with $0\le \xi \le 1$ and $\xi (x)=1$ for $x\in \omega _0$, and that $g(t)=1$ for $\delta \le t\le T$, so that
\[
\varphi (x,t) =2 e^{ ||\psi ||_{L^\infty} } -e^{ \psi (x,t) }\quad \text{ for } x\in \T,\  t \in [\delta , T].
\]
We have that
\[
\varphi _t + c\varphi _x = - (\psi _t + c\psi _x) e^{ \psi} = -2 c (x+\varepsilon -\rho c (t-2\delta ))e^{ \psi}
\quad \text{ for } x\in [0, 2\pi -\eta ], \  t\in [\delta ,T].
\]
For $t=\delta $
\[
-s(\varphi _t + c\varphi _ x)(x,\delta) > 2c ( \varepsilon +\rho c\delta ) s e^{ \psi} >0 \qquad \text{for } x\in (0, 2\pi - \eta ),
\]
while for $t=T$, by \eqref{abcd},
\[
s(\varphi _t + c\varphi _ x)(x, T ) > 2c ( \rho Tc -2\rho \delta c -2\pi + \eta  -\varepsilon  ) s e^{ \psi} >0 \qquad \text{for } x\in (0, 2\pi - \eta ).
\]
Therefore
\be
\int_{\T} (1-\xi )^2 s(\varphi _t + c\varphi _x) \frac{v^2}{2} \bigg\vert _\delta ^T \ge
C\left(
\int_{\T}(1-\xi) ^2  s\theta |v|^2 _{\vert t=\delta} dx + \int_{\T} (1-\xi )^2  s\theta |v|^2 _{\vert t=T}dx
\right) .
\label{XYZ2}
\ee
Next we compute
\begin{eqnarray*}
\varphi_{tt}+2c\varphi_{xt} + c^2\varphi_{xx} &=& -\{ (\psi _t + c\psi _x) ^2  +(\psi _{tt} +c^2\psi _{xx})    \} e^{ \psi} \\
&\le& 2(\rho -1 ) c^2 e^{ \psi} \qquad \text{ for } x\in (0,2\pi -\eta ),
\end{eqnarray*}
which yields
\be
-\frac{s}{2} \int_\delta^T \!\!\! \int_{\T}(1-\xi )^2 (\varphi _{tt} +2c\varphi _{xt} +c^2\varphi _{xx})|v|^2dxdt  \ge |1-\rho | c^2 \int_\delta^T\!\!\! \int_{\T} (1-\xi ) ^2  s\theta |v|^2 dxdt.
\label{XYZ3}
\ee
Claim 5 follows from \eqref{XYZ0}-\eqref{XYZ3}.

We infer from Claim 4 and Claim 5 that for some constants $s_1\ge s_0$ and $C_1>0$ we have
for all $s\ge s_1$
\be
\int_0^T\!\!\! \int_{\T}    (s\theta  )|v|^2dxdt \le
 C_1\left(
 \int_0^T\!\!\! \int_{\T}  |e^{-s\varphi} Pw|^2 dxdt+ \int_0^T \!\!\! \int_\omega ( s\theta )^2 |v|^2 dxdt
 \right) . \label{F200}
\ee
Replacing $v$ by $e^{-s\varphi}w$ in \eqref{F200} gives at once \eqref{E50}. The proof of Lemma \ref{lem2} is complete. \\[5mm]
{\sc Step 3.} We would like to apply Lemma \ref{lem1} to $u$ and Lemma \ref{lem2} to $w=u-u_{xx}\in L^2(0,T;L^2(\T))$, which has not the
regularity required. Note, however,  that \eqref{E50} is still true when $w$ and $f:=w_t+cw_x$ are in $L^2(0,T;L^2(\T))$. Indeed,
in that case $w\in C([0,T]; L^2(\T ))$, and if  $(w_0^n)$ and $(f^n)$ are two sequences in $H^1(\T )$ and $L^2(0,T; H^1(\T ))$ respectively, such that
\begin{eqnarray*}
w_0^n &\to& w(0) \quad \text{ in } L^2(\T ) ,\\
f^n &\to& f \qquad\ \, \text{ in } L^2(0,T;L^2(\T)),
\end{eqnarray*}
then the solution $w^n\in C([0,T]; H^1(\T ))$ of
\begin{eqnarray*}
w^n_t + c w^n_x &=&f^n,\\
w^n(0) &=& w_0^n
\end{eqnarray*}
satisfies $w^n \in H^1(\T \times (0,T))$ and $w^n\to w$ in $C([0,T];L^2(\T ))$, so that we can apply
\eqref{E50} to $w^n$ and next pass to the limit $n\to \infty$ in \eqref{E50}.

Here, $u\in L^2(0,T;H^2(\T ))$, $w\in L^2(0,T; L^2(\T ))$ and
$w_t+c w_x = (c-q)u_x \in L^2(0,T;L^2(\T ))$. Combining \eqref{E3}, \eqref{E4}, \eqref{E20}, \eqref{E100}, and \eqref{E50}, we obtain for $s\ge s_1$ that
\begin{multline}
\int_0^T\!\!\! \int_{\T} [  (s\theta ) |u_{x}|^2 + (s\theta ) ^3 |u|^2 +
 (s\theta ) |w|^2] e^{-2s\varphi} dxdt \\
\le C\int_0^T\!\!\!\int_{\T} [|u|^2 +  |w|^2 +|(c-q)u_x|^2 ]e^{-2s\varphi } dxdt.
\end{multline}
We conclude that $u=w=0$ on $\T \times (0,T) $  by choosing $s\ge s_1$ large enough.
\end{proof}

\begin{corollary}
\label{cor20}
The same conclusion as in Theorem \ref{thm20} holds when $u\in L^2(0,T;H^2(\T ))$ is replaced by $u\in L^\infty (0,T; H^1(\T ))$.
\end{corollary}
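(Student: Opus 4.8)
The plan is to deduce the corollary from Theorem~\ref{thm20} by a regularization in the space variable. The only obstruction is that for $u\in L^\infty(0,T;H^1(\T))$ the companion function $w=u-u_{xx}$ lies merely in $L^\infty(0,T;H^{-1}(\T))$, so the pair $(u,w)$ does not have the regularity needed to enter the elliptic--transport system \eqref{E3}--\eqref{E4} and the Carleman machinery behind Theorem~\ref{thm20}. To cure this, let $(\rho_\epsilon)_{\epsilon>0}$ be a standard mollifier on $\T$ and set $u^\epsilon=\rho_\epsilon * u$ (convolution in $x$). Applying $\rho_\epsilon *$ in $x$ to \eqref{E1} and rearranging, $u^\epsilon$ solves
\[
u^\epsilon_t-u^\epsilon_{txx}-cu^\epsilon_{xxx}+qu^\epsilon_x=g^\epsilon,\qquad g^\epsilon:=qu^\epsilon_x-\rho_\epsilon * (qu_x).
\]
Since $qu_x\in L^\infty(0,T;L^2(\T))$ and $u^\epsilon$ is smooth in $x$, one checks that $u^\epsilon$ and $u^\epsilon_t$ belong to $L^\infty(0,T;H^m(\T))$ for every $m$; in particular $u^\epsilon\in L^2(0,T;H^2(\T))$ and $w^\epsilon:=u^\epsilon-u^\epsilon_{xx}\in H^1(\T\times(0,T))$, so the regularity hypotheses of Lemmas~\ref{lem1}--\ref{lem2} and of Step 3 of Theorem~\ref{thm20} are all satisfied by $u^\epsilon$ and $w^\epsilon$. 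Finally, by dominated convergence, $g^\epsilon\to0$ in $L^2(0,T;L^2(\T))$ as $\epsilon\to0$, because both $qu^\epsilon_x$ and $\rho_\epsilon * (qu_x)$ converge to $qu_x$ in that space.

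Next I would shrink the observation set so that $u^\epsilon$ still vanishes on it. Using the normalization of the proof of Theorem~\ref{thm20}, assume $\omega\supset(2\pi-2\eta,2\pi+\eta)$, fix $\eta'\in(0,\eta)$, and carry out the entire construction of Theorem~\ref{thm20} (the numbers $\delta,\varepsilon,\rho$, the weights $g,\psi,\varphi,\theta$, and Lemmas~\ref{lem1}--\ref{lem2}) with $\omega'=(2\pi-2\eta',2\pi+\eta')$ in place of $\omega$; this is possible because $T>2\pi/|c|$. For $0<\epsilon<\eta-\eta'$ one has $u^\epsilon=0$, hence also $w^\epsilon=0$, on $\omega'\times(0,T)$. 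The argument of Step 3 of Theorem~\ref{thm20} then applies to $u^\epsilon$ and $w^\epsilon$, the sole difference being that the transport equation now reads $w^\epsilon_t+cw^\epsilon_x=(c-q)u^\epsilon_x+g^\epsilon$, so that $\int_0^T\!\!\int_\T|g^\epsilon|^2e^{-2s\varphi}\,dxdt$ is added to the right-hand side of the combined Carleman estimate. Since $g\ge1$, $\theta$ is bounded below on $\T\times(0,T)$ by a positive constant, so all the terms involving $u^\epsilon$ and $w^\epsilon$ on the right-hand side are absorbed by the left-hand side once $s$ is fixed large enough, the absorption threshold depending only on $c$, $||q||_{L^\infty}$ and the fixed weights, not on $\epsilon$. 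Using also $0\le e^{-2s\varphi}\le1$, this leaves
\[
\int_0^T\!\!\!\int_\T\big[(s\theta)|u^\epsilon_x|^2+(s\theta)^3|u^\epsilon|^2\big]e^{-2s\varphi}\,dxdt\le C\int_0^T\!\!\!\int_\T|g^\epsilon|^2\,dxdt.
\]

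Finally, I would fix such an $s$ and let $\epsilon\to0$. The right-hand side tends to $0$. On the left, $(s\theta)e^{-2s\varphi}$ and $(s\theta)^3e^{-2s\varphi}$ are bounded on $\T\times(0,T)$ (near $t=0$ the exponential decay of $e^{-2s\varphi}$ beats the growth of $s\theta=sg$), while $u^\epsilon\to u$ and $u^\epsilon_x\to u_x$ in $L^2(0,T;L^2(\T))$; hence the left-hand side converges to $\int_0^T\!\!\int_\T[(s\theta)|u_x|^2+(s\theta)^3|u|^2]e^{-2s\varphi}\,dxdt$. This integral therefore vanishes, and since the weight is positive on $\T\times(0,T)$, we conclude $u\equiv0$.

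The real content is point (ii): the Carleman estimate of Theorem~\ref{thm20} is quantitative, with constants and a Carleman parameter $s$ that do not depend on the mollification scale $\epsilon$, which is exactly what lets one freeze $s$ before sending $\epsilon\to0$. The only genuine obstacle is the one noted at the outset, namely the failure of $w=u-u_{xx}$ to be $L^2$ in space when $u$ is only $H^1$; everything else is bookkeeping around the regularization and the shrinking of the observation set.
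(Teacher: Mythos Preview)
Your argument is correct, but it takes a genuinely different route from the paper. The paper regularizes in \emph{time}, setting $u^{[h]}(x,t)=h^{-1}\int_t^{t+h}u(x,s)\,ds$, shrinks the time interval to $(0,T')$ with $T'\in(2\pi/|c|,T)$, and then uses the equation itself to bootstrap spatial regularity: since $u^{[h]}_t\in L^\infty(0,T';H^1)$, one reads $u^{[h]}_{xxx}=c^{-1}\big(u^{[h]}_t-u^{[h]}_{txx}+(qu_x)^{[h]}\big)\in L^\infty(0,T';H^{-1})$, whence $u^{[h]}\in L^\infty(0,T';H^2)$. The commutator that must vanish in the limit is $((c-q)u_x)^{[h]}-(c-q)u_x^{[h]}$. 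You instead regularize in \emph{space}, which gives $u^\epsilon$ smooth in $x$ for free and produces the spatial commutator $g^\epsilon=qu^\epsilon_x-\rho_\epsilon*(qu_x)$; the price is that you must shrink the observation set from $\omega$ to $\omega'$ so that $u^\epsilon$ still vanishes there. Your route is arguably more elementary, since it avoids the equation-based bootstrap of spatial regularity; the paper's route keeps $\omega$ fixed and only trims the time interval. In both cases the essential point is the same one you identify: the Carleman constants and the threshold for $s$ depend only on the fixed weights (hence on $\eta'$ or $T'$) and on $\|q\|_{L^\infty}$, not on the mollification parameter, so one may freeze $s$ before passing to the limit.
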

\begin{proof}
We proceed as in \cite{RZ2006}. Since $u$ and $w:=u-u_{xx}$ are not regular enough to apply Lemmas \ref{lem1} and \ref{lem2},
we smooth them by using some convolution in time. For any function $v=v(x,t)$ and any number $h>0$, we set
\[
v^{[h]}(x,t)=\frac{1}{h}\int _t^{t+h} v(x,s)\, ds.
\]
Recall that if $v\in L^p(0,T;V)$, where $1\le p\le +\infty$ and $V$ denotes any Banach space, then
$v^{[h]}\in W^{1,p}(0,T-h;V)$, $||v^{[h]} ||_{L^p(0,T-h;V)} \le ||v||_{L^p(0,T;V )}$, and for $p<\infty$ and $T'<T$
\[
v^{[h]} \to v \qquad \text{ in } L^p(0,T';V) \quad \text{ as } h\to 0.
\]
In the sequel, $v^{[h]}_t$ denotes $(v^{[h]})_t$, $v^{[h]}_x$ denotes $(v^{[h]})_x$, etc.
Pick any $T' \in (\frac{2\pi}{|c|},T)$ such that \eqref{abcd} still holds with $T$ replaced by $T'$, and define the functions $g,\psi,\varphi$, 
and $\theta$ as above, but with $T$ replaced by $T'$.  Then for any positive number $h<h_0=T-T'$, $u^{[h]}\in W^{1,\infty} (0,T';H^1(\T ))$, and it solves
\begin{eqnarray}
&&u_t^{[h]}-u_{txx}^{[h]}  -c u_{xxx}^{[h]} + (qu_x)^{[h]} =0\qquad \text{ in }\  {\mathcal D}'(0,T';H^{-2}(\T )),\label{F100}\\
&&u^{[h]}(x,t) =0 \qquad (x,t)\in \omega \times (0,T'). \label{F1000}
\end{eqnarray}
From \eqref{F100}, we infer that
\[
u_{xxx}^{[h]} = c^{-1} \big( u_t^{[h]} -u_{txx}^{[h]} +(qu_x)^{[h]}  \big) \in L^\infty(0,T';H^{-1}(\T )),
\]
hence
\be
\label{F3}
u^{[h]} \in L^\infty (0,T'; H^2(\T )) .
\ee
This yields, with \eqref{E3}-\eqref{E4},
\ba
w^{[h]} &=& u^{[h]} -  u_{xx}^{[h]} \in L^\infty ( 0,T'; L^2(\T)), \label{F4}\\
w_t^{[h]} +cw_x^{[h]} &=& \big( (c-q)u_x \big) ^{[h]} \in W^{1,\infty} (0,T;L^2(\T )). \label{F5}
\ea
From \eqref{F3}, \eqref{F4},  \eqref{F5} and Lemmas \ref{lem1} and \ref{lem2}, we infer that there exist some constants
$s_1>0$ and $C_1 >0$ such that for all $s\ge s_1$ and all $h\in (0,h_0)$, we have
\begin{eqnarray}
&&\int_0^{T'}\!\!\! \int_{\T}
\left(  (s\theta ) |u_x^{[h]}|^2 +  (s\theta )^3 |u^{[h]} |^2
+ (s\theta  ) |w^{[h]}|^2   \right) e^{-2s\varphi} dxdt \nonumber \\
&&\quad \le C_1\int_0^{T'}\!\!\!\int_{\T} \left( |u^{[h]} | ^2 +  |w^{[h]} |^2 + |((c-q)u_x)^{[h]} |^2    \right) e^{-2s\varphi} dxdt  \nonumber\\
&&\quad \le C_1\int_0^{T'}\!\!\!\int_{\T} \left(  |u^{[h]}| ^2 +  |w^{[h]} |^2 + 2|(c-q)u_x^{[h]} |^2    +2|((c-q)u_x)^{[h]} - (c-q)u_x^{[h]} |^2
  \right) e^{-2s\varphi} dxdt. \qquad\qquad \label{F9}
\end{eqnarray}
Comparing the powers of $s$ in \eqref{F9}, we obtain that for $s\ge s_2>s_1$, $h\in (0,h_0)$ and some constant
$C_2>C_1$ (that does not depend on $s,h$)
\begin{eqnarray*}
&&\int_0^{T'}\!\!\! \int_{\T}
\left(  (s\theta ) |u_x^{[h]}|^2 + (s\theta )^3 |u^{[h]} |^2
+ (s\theta )  |w^{[h]}|^2   \right) e^{-2s\varphi} dxdt \\
&&\quad \le C_2\int_0^{T'}\!\!\!\int_{\T}  |((c-q)u_x)^{[h]} - (c-q)u_x^{[h]} |^2  e^{-2s\varphi} dxdt.
\end{eqnarray*}
Fix $s$  to the value $s_2$, and let $h\to 0$. We claim that
\[
\int_0^{T'}\!\!\! \int_{\T} | ( (c-q)u_x)^{[h]} -(c-q)u_x^{[h]} |^2 e^{-2s_2\varphi} \to 0 \qquad \text{ as } h\to 0.
\]
Indeed, as $h\to 0$,
\begin{eqnarray*}
((c-q)u_x)^{[h]} &\to&  (c-q) u_x  \qquad \text{ in } L^2(0,T';L^2(\T)),\\
(c-q)u_x^{[h]} &\to&  (c-q) u_x     \qquad \text{ in } L^2(0,T';L^2(\T )),
\end{eqnarray*}
while $e^{-2s_2\varphi}\le 1$. Therefore,
\[
\int_0^{T'}\!\!\! \int_{\T} \theta ^3 |u^{[h]}|^2 e^{-2s_2\varphi} dxdt \to 0 \qquad \text{ as } h\to 0.
\]
On the other hand, $u^{[h]} \to u $ in $L^2(0,T';L^2(\T))$ and $\theta ^3e^{-2s_2\varphi}$ is bounded on $\T \times (0,T')$, so that
\[
\int_0^{T'}\!\!\! \int_{\T} \theta ^3 |u^{[h]}|^2 e^{-2s_2\varphi} dxdt \to \int_0^{T'}\!\!\! \int_{\T} \theta ^3 |u|^2 e^{-2s_2\varphi} dxdt
\]
as $h\to 0$. We conclude that $u\equiv 0$ in $\T \times (0,T')$. As $T'$ may be taken arbitrarily close to $T$, we infer that
$u\equiv 0$ in $\T\times (0,T)$, as desired.
\end{proof}
\section{Control and Stabilization of the KdV-BBM equation}
\label{sec:control}
In this section we are concerned with the control properties of the system
\begin{eqnarray}
&&u_t - u_{txx} -cu_{xxx} +(c+1)u_x + uu_x= a(x) h, \qquad x\in \T ,\ t\ge 0, \label{D1}\\
&&u(x,0)= u_0(x), \label{D2}
\end{eqnarray}
where $c\in \R \setminus \{ 0 \} $ and $a\in C^\infty(\T )$ is a given nonnul function. Let
\begin{equation} \label{D3}
\omega = \{ x\in \T ; \ a(x)\ne 0 \} \ne \emptyset .
\end{equation}
\subsection{Exact controllability}
The first result is a local controllability result in large time.
\begin{theorem}
\label{thm1} Let $a\in C^\infty(\T)$ with $a\ne 0$, $s\geq 0$ and
$T>2\pi/|c|$. Then there exists a $\delta >0$ such that for any
$u_0,u_T\in H^s(\T)$ with
\[ ||u_0||_{H^s} + ||u_T||_{H^s} < \delta ,\]  one
can find  a control input $h\in L^2(0,T;H^{s-2}(\T ))$ such that the
system \eqref{D1}-\eqref{D2} admits a unique solution  $u\in
C([0,T],H^s(\T ))$  satisfying $u(\cdot ,T)=u_T$.
\end{theorem}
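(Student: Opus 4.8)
\textbf{Proof proposal for Theorem \ref{thm1}.}

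The plan is to linearize \eqref{D1} around $0$, establish the exact controllability of the linearized equation in $H^s(\T)$ by a duality (HUM) argument, and then recover the nonlinear result by a fixed-point argument for $u_0,u_T$ small enough. The linearized control system is
\begin{equation*}
u_t-u_{txx}-cu_{xxx}+(c+1)u_x=a(x)h,\qquad u(\cdot,0)=u_0,
\end{equation*}
or, applying $(1-\partial_x^2)^{-1}$, an equation of the form $u_t=\mathcal A u+(1-\partial_x^2)^{-1}(ah)$ with $\mathcal A=-(1-\partial_x^2)^{-1}\big((c+1)\partial_x-c\partial_x^3\big)$. Since the eigenfunctions of $\mathcal A$ are the $e^{ikx}$ with eigenvalues $i\mu_k$, $\mu_k=\big(-(c+1)k-ck^3\big)/(1+k^2)=k\,\frac{-(c+1)-ck^2}{1+k^2}$, the spectrum is purely imaginary with no finite accumulation point (the ``hyperbolic'' type alluded to in the introduction): indeed $\mu_k/k\to -c\ne 0$, so the $\mu_k$ are asymptotically a lattice $-ck+O(1/k)$ and satisfy a uniform gap $|\mu_{k+1}-\mu_k|\ge \gamma>0$ for $|k|$ large. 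First I would reduce to the $L^2$ (i.e.\ $s=0$) case by using the smoothing: the control enters through $(1-\partial_x^2)^{-1}a\,h$, so choosing $h\in L^2(0,T;H^{s-2})$ is exactly what is needed to control in $H^s$, and conjugating by $(1-\partial_x^2)^{s/2}$ one sees all cases are equivalent to $s=0$. Then by the classical duality argument, $L^2$-exact controllability in time $T$ is equivalent to the observability inequality
\begin{equation*}
\|\varphi_0\|_{L^2(\T)}^2\le C\int_0^T\!\!\!\int_\T |a(x)\varphi(x,t)|^2\,dx\,dt
\end{equation*}
for solutions $\varphi$ of the adjoint equation $\varphi_t=-\mathcal A^*\varphi$, which (since $\mathcal A$ is skew-adjoint up to a compact/bounded perturbation, and in fact here $\mathcal A$ is skew-adjoint on $L^2$ with the equivalent inner product $\langle(1-\partial_x^2)\cdot,\cdot\rangle$) reduces to an Ingham-type inequality for the nonharmonic Fourier series $\varphi(x,t)=\sum_k c_k e^{ikx}e^{i\mu_k t}$. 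Expanding $a(x)\varphi(x,t)$, the key point is that the exponents $\{\mu_k+n : k\in\Z, n\in\Z\}$ arising from $a(x)=\sum a_n e^{inx}$ still have the uniform gap property needed for Ingham's theorem provided $T$ is large enough; the threshold $T>2\pi/|c|$ is precisely the one coming from the asymptotic density $1/|c|$ of the $\mu_k$ (the ``moving support visits all of $\T$'' heuristic). This gives an Ingham inequality yielding observability, hence exact controllability of the linear system, with a bounded control operator $u_0,u_T\mapsto h$.

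The hard part will be the observability/Ingham estimate: one must handle the low frequencies (where the $\mu_k$ are not yet in the asymptotic lattice and where the nonlocal term is not small — exactly the obstruction noted for Bourgain's method) and show no nontrivial solution of the adjoint equation vanishes on $\omega\times(0,T)$. This is where the UCP plays its role: the finite-dimensional piece of the observability (a compactness–uniqueness argument) requires the Unique Continuation Property for \eqref{E1} with $q=c+1$, which is supplied by Theorem \ref{thm20} (valid precisely for $T>2\pi/|c|$), so the low-frequency part of the observability inequality follows. I would therefore structure the linear part as: (1) Ingham inequality for $|k|$ large giving observability modulo a finite-dimensional space; (2) compactness–uniqueness using Theorem \ref{thm20} to absorb the finite-dimensional part; (3) duality to get exact controllability with a continuous linear control map $\Lambda:(u_0,u_T)\mapsto h$, $\|\Lambda(u_0,u_T)\|_{L^2(0,T;H^{s-2})}\le C(\|u_0\|_{H^s}+\|u_T\|_{H^s})$.

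Finally, for the nonlinear system I would set up a fixed-point scheme in $C([0,T];H^s(\T))$: given $u$, solve the \emph{linear} controlled problem with source term $-(uu_x)$ (which is an admissible forcing since $uu_x=\frac12\partial_x(u^2)$ and, for $s\ge 0$, $u^2\in L^\infty(0,T;H^{s}(\T))$ by the algebra/product estimates for $H^s$ with $s>1/2$, and by the BBM well-posedness theory of Theorem \ref{thmA} more generally), choosing the control so that the solution still goes from $u_0$ to $u_T$; this defines a map $\Gamma u=:\tilde u$, and one shows $\Gamma$ is a contraction on a small ball of $C([0,T];H^s)$ provided $\delta$ is small, using the continuity of $\Lambda$, the smoothing $A=-(1-\partial_x^2)^{-1}\partial_x$, and the Lipschitz bound $\|u^2-v^2\|\lesssim(\|u\|+\|v\|)\|u-v\|$. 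The fixed point is the desired solution with $u(\cdot,T)=u_T$, and uniqueness in $C([0,T];H^s(\T))$ follows from the well-posedness theory. I expect the genuinely delicate steps to be the Ingham inequality at the critical time $2\pi/|c|$ and the verification that the finite-dimensional obstruction is removed exactly by Theorem \ref{thm20}; the nonlinear fixed point is then routine given the smoothing and the boundedness of the linear control map.
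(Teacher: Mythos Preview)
Your overall strategy is sound, but it differs from the paper's in the linear step, and there are a couple of slips worth flagging.

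\textbf{Linear controllability: different route.} The paper does \emph{not} use HUM/observability or compactness--uniqueness. It builds the control explicitly by a moment method: writing $h(x,t)=a(x)\sum_j f_j q_j(t)e^{ijx}$, it uses that for $T>2\pi/|c|$ the family $\{e^{-ik\sigma(k)t}\}_{k\in\cI}$ (distinct eigenvalues) is a Riesz basis of its closed span in $L^2(0,T)$, takes $\{q_j\}$ to be the dual Riesz basis, and then solves for the $f_k$. The finitely many coincident eigenvalues are handled by inverting the (always positive-definite) Gram matrix $(m_{p,q})=(\frac{1}{2\pi}\int a^2 e^{i(p-q)x}dx)$. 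This yields a bounded linear control map $\Phi:(u_0,u_T)\mapsto h$ directly, with no appeal to Theorem~\ref{thm20}. Your approach via Ingham~$+$~compactness--uniqueness invoking the UCP would also give the result, but it is less constructive and makes the local controllability logically dependent on the Carleman machinery; in the paper the UCP is reserved for the stabilization proof (Proposition~\ref{prop10}), not for Theorem~\ref{thm1}.

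\textbf{Minor slips.} (i) In your observability sketch, multiplying by $a(x)$ does \emph{not} shift the time frequencies: the relevant family is $\{\mu_k\}_k$, not $\{\mu_k+n\}_{k,n}$; the spatial Fourier coefficients of $a^2$ appear as weights, not as frequency shifts. (ii) Conjugation by $(1-\partial_x^2)^{s/2}$ does not commute with the multiplication operator $a(x)$, so the reduction to $s=0$ is not immediate as stated; the paper simply does the moment argument in $H^s$ and estimates $\|h\|_{L^2(0,T;H^{s-2})}$ directly. (iii) For $0\le s\le 1/2$ you cannot use the algebra property of $H^s$; what is actually needed (and what the paper uses, see Lemma~\ref{bilinear}) is the smoothing estimate $\|(1-\partial_x^2)^{-1}\partial_x(uv)\|_{H^s}\le C\|u\|_{H^s}\|v\|_{H^s}$, valid for all $s\ge 0$.

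The nonlinear fixed-point step you outline is essentially the same as the paper's.
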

\begin{proof}
The result is first proved for the linearized equation, and next extended to the nonlinear one by a fixed-point argument.\\
\smallskip
{\sc Step 1. Exact controllability of the linearized system}\\
\medskip
We first consider  the exact controllability  of the linearized
system
 \ba
&&u_t-u_{txx} -cu_{xxx} +(c+1)u_x = a(x)h, \label{D5}\\
&&u(x,0)=u_0(x),\label{D6} \ea in $H^s(\T )$ for any $s\in
\mathbb{R} $.   Let $A=( 1-\partial _x^2)^{-1}(c\partial_x^3
-(c+1)\partial _x )$ with domain $D(A)=H^{s+1}(\T )\subset H^s(\T )$. 
The operator $A$ generates a group of isometries $\{ W(t) \} _{t\in \R}$ in $H^s(\T )$, with
\[ W(t)v  =
\sum_{k=-\infty}^{\infty} e^{-it\frac{ck^3+(c+1)k}{k^2+1}} {\hat
v}_k e^{ikx} 
\]
 for  any \[ v= \sum_{k=-\infty}^{\infty} \hat
{v}_ke^{ikx}   \in H^s (\T).\] 
The system \eqref{D5}-\eqref{D6} may be cast into the
following integral form
 \[
 u(t) = W(t) u_0 + \int_0^t W(t-\tau) (  1 -\partial _x^2) ^{-1} [a(x) h(\tau)] d\tau.
 \]
We proceed as in \cite{MORZ}. 
Take $h(x,t)$ in  \eqref{D5} to have the following form
 \newcommand{\jsum}{\sum ^{\infty}_{j=-\infty}}
 \begin{equation}\label{h}
  h(x,t) = a(x) \sum ^{\infty}_{ j = -\infty}f_j q_j (t)  e^{ijx}
  \end{equation}
  where $f_j $ and $q_j (t)$ are to be determined later. Then the
  solution $u$ of the equation (\ref{D5}) can be written as
  \[ u(x,t)= \sum ^{\infty}_{k=-\infty}  \hat{u}_k(t) e^{ikx} \]
  with $\hat{u}_{k} (t) $ solves
  \begin{equation}\label{h1}
  \frac{d}{dt} \hat{u}_k (t) + ik\sigma (k) \hat{u}_k (t) =
  \frac{1}{1+k^2} 
  \jsum f_j q_j(t)  m_{j,k}
  \end{equation}
  where $\sigma (k)= \frac{ck^2+c+1}{1+k^2} $, and
  \[ m_{j,k} =\frac{1}{2\pi} \int _{\T} a^2 (x)e^{i(j-k)x} dx .\]
  Thus
  \[ \hat{u}_k (T)-e^{-ik\sigma (k)T} \hat{u}_k(0)=  \frac{1}{1+k^2}
  \jsum f_j m_{j,k} \int ^T_0 e^{-ik\sigma(k)(T-\tau)} q_j (\tau )
  d\tau \]
  or
\[ \hat{u}_k (T)e^{ik\sigma (k)T}- \hat{u}_k(0)= \frac{1}{1+k^2}
  \jsum f_j m_{j,k} \int ^T_0 e^{ik\sigma(k)\tau} q_j (\tau )
  d\tau .\]
It may occur that  the eigenvalues 
  \[ 
  \lambda _k = i k \sigma (k), \ k \in \Z 
  \] 
  are not all different. If we count only the distinct values, we obtain the sequence $(\lambda _k)_{k\in \cI}$, where $\cI \subset \Z$ has the property that 
  $\lambda _{k_1}\ne \lambda _{k_2}$ for any $k_1,k_2\in \cI$ with $k_1\ne k_2$. For each $k_1\in \Z$ set
  \[
  I(k_1)=\{ k\in\Z ;\ k\sigma (k) = k_1\sigma (k_1) \}
  \]
  and $m(k_1)= | I(k_1) |$ (the number of elements in $I(k_1)$). Clearly,
  there exists some integer $k^*$ such that $k\in \cI$ if $| k | >k^*$. 
  Thus there are only finite many integers in $\cI$, say $k_j$, $j=1,...,n$, such that one can find another
  integer $k\ne  k_j$ with $\lambda _k =\lambda _{k_j}$. Let 
  \[
  \cI _j=\{k\in \Z; \ k\ne k_j,\ \lambda _k=\lambda _{k_j}\}, \ j=1,2,...,n.
  \]
  Then 
  \[
  \Z = \cI \cup \cI _1 \cup ... \cup \cI _n.
  \]
  Note that $\cI _j$ contains at most two integers, for $m(k_j) \le 3$. We write 
  \[
  \cI _j = \{ k_{j,1},k_{j,m(k_j)-1} \}\quad j=1,2,...,n
  \]
 and rewrite $k_j$ as $k_{j,0}$.  
 Let
  \[ p_k(t):= e^{-ik\sigma (k) t}, \quad k=0, \pm 1, \pm 2,...
  \]
  Then the set
\[\mathcal{P}:= \{ p_k (t); \  k\in \cI \} \]
  forms a Riesz basis for its closed span, $\mathcal{P}_T,$ in $L^2(0,T)$ if
  \[ T> \frac{2\pi}{|c|} \cdot \]
  Let $\mathcal{L}:= \{ q_j (t); \ j\in \cI  \}$ be
  the unique dual Riesz basis for $\mathcal{P}$ in $\mathcal{P}_T$; that is,
  the functions in $\mathcal{L}$ are the unique elements of $\mathcal{P}_T$
  such that
  \[ \int ^T_0 q_j (t)\overline{p_k (t)} dt = \delta _{kj}, \ j,k\in \cI .\]
  In addition, we choose
  \[
  q_k = q_{k_j} \qquad \text{ if } k\in \cI _j.
  \]
  For such choice of $q_j (t)$, we have then, for any $k\in \Z$,
  \be 
  \label{S1}
  \hat{u}_k (T)e^{ik\sigma (k)T}-\hat{u}_k (0)= \frac{1}{1+k^2}
  f_k m_{k,k} \quad \text{ if } k\in \cI \setminus \{ k_1,...,k_n\}; 
  \ee
  \be 
  \label{S2}
  \hat{u}_{k_{j,q}} (T)e^{ik_j\sigma (k_j)T}-\hat{u}_{k_{j,q}} (0)= \frac{1}{1+k_{j,q}^2}
  \sum_{l=0}^{m(k_j) -1} f_{k_{j,l}} m_{k_{j,l},k_{j,q}} \quad \text{ if } k=k_{j,q}, \ j=1,...,n,\  q=0,...,m(k_j)-1. 
  \ee
  It is well known that for any finite set $\mathcal J\subset \Z$, the  Gram matrix 
  $A_{\mathcal J}=(m_{p,q})_{p,q \in \mathcal J}$ is definite positive, hence invertible.
It follows that the system \eqref{S1}-\eqref{S2} admits a unique solution
 $\vec f (...,f_{-2},f_{-1},f_0,f_1,f_2,...)$. Since
  \[ m_{k,k}  
  = \frac{1}{2\pi} \int _{\T} a^2(x) dx =: \mu \ne 0, \]
  we have that
  \[ f_k = \frac{1+k^2}{\mu} 
  \left(  \hat{u}_k (T )e^{ik\sigma (k)T}-\hat{u}_k
  (0)\right)     \qquad \text{ for } |k| > k^*  \]
  Note that 
  \begin{eqnarray*}
  ||h||^2_{L^2(0,T;H^{s-2}(\T ))} 
  &=& \int_0^T || a(x) \sum_{j=-\infty}^\infty f_jq_j(t)e^{ijx} ||^2_{H^{s-2}}dt\\
  &\le& C\int_0^T \sum_{j=-\infty}^\infty  (1+j^2)^{s-2}|f_jq_j(t)|^2\\
  &\le& C\sum_{j=-\infty}^\infty (1+j^2)^{s-2} |f_j|^2 \\
  &\le& C\left( ||u(0)||^2_{H^s} + ||u(T)||^2_{H^s} \right) .
  \end{eqnarray*}
  
  This analysis leads us to the
  following controllability result for the linear system (\ref{D5})-(\ref{D6}).
  
  \begin{proposition} Let $s\in \R $ and $T>\frac{2\pi}{|c|}$ be
  given. For any $u_0, u_T\in H^s (\T)$, there exists a control 
  $h\in L^2 (0,T; H^{s-2} (\T))$ such that the system (\ref{D5})-(\ref{D6}) admits a unique
  solution $u\in C([0,T]; H^s (\T))$ satisfying
  \[ u(x,T)=u_T (x).\]
  Moreover, there exists a constant $C>0$ depending only on $s$ and
  $T$ such that
  \[ \| h\|_{L^2 (0,T; H^{s-2} (\T))} \leq C \left (\|u_0 \|_{H^s}
  +\|u_T\|_{H^s} \right) .\]
  \end{proposition}

Introduce the (bounded) operator $\Phi : \ H^s(\T )\times H^s(\T )  \to L^2(0,T;H^{s-2}(\T ))$  defined by
\[ \Phi (u_0, u_T)(t) = h(t ), \]
where $h$ is given by \eqref{h} and $\vec f$ is the solution of \eqref{S1}-\eqref{S2} with 
$(\widehat{u_0})_k$ and $(\widehat{u_T})_k$ substitued to $\hat{u}_k(0)$ and 
$\hat{u}_k(T)$, respectively.  

Then $h=\Phi (u_0,u_T)$ is a
control driving the solution $u$ of \eqref{D5}-\eqref{D6}
from $u_0$ at $t=0$ to $u_T$ at $t=T$. \\

\noindent
{\sc Step 2. Local exact controllability of the BBM equation.}\\

We proceed as in \cite{Rosier97}. Pick any time $T>2\pi / |c|$, and
any $u_0,u_T\in H^s (\T )$ ($s\ge 0$) satisfying \[ \| u_0\|_{H^s}\leq \delta,
\quad \|u_T\|_{H^s} \leq \delta \]
with $\delta $ to be determined. For
any $u\in C([0,T];H^s(\T ))$, we set
\[
\omega (u) = - \int_0^T W(T-\tau)(  1 -\partial _x ^2)^{-1}(uu_x)(\tau)\, d\tau.
\]
Then
\[
||\omega (u)-\omega (v) ||_{H^s} \le CT ||u+v||_{L^\infty (0,T;H^s(\T
))} ||u-v||_{L^\infty(0,T;H^s(\T ))}.
\]
Furthermore,
\begin{eqnarray*}
&&W(t) u_0 + \int_0^t W(t-\tau ) (1-\partial _x ^2)^{-1}  [a(x) \Phi (u_0, u_T -\omega (u))  
- uu_x ](\tau )d\tau \\
&&\qquad =
\left\{
\begin{array}{ll}
u_0 \qquad &\text{ if } t=0,\\
\omega ( u ) +(u_T -\omega (u) )=u_T  &\text{ if } t=T.
\end{array}
\right.
\end{eqnarray*}
We are led to consider the nonlinear map
\[
\Gamma (u) = W(t) u_0 +\int_0^t W(t-\tau )  (1-\partial _x^2) ^{-1} [ a(x) \Phi (u_0, u_T  -\omega (u)) - uu_x  ](\tau )\,d\tau .
\]
The proof of Theorem \ref{thm1} will be complete if we can show that
the map $\Gamma $ has a fixed point in some closed ball of the space
$ C([0,T];H^s(\T ))$. For any $R>0$, let
\[
B_R =\{ u\in C([0,T];H^s(\T )); \ ||u||_{C([0,T];H^s(\T ))} \le R\}.
\]
From the above calculations, we see that there exist two positive
constants $C_1,C_2$ (depending on $s$ and $T$, but not on $R$,
$||u_0||_{H^s}$ or $||u_T||_{H^s}$) such that for all $u,v\in B_R$
\begin{eqnarray*}
|| \Gamma (u) ||_{C([0,T];H^s(\T ))} &\le& C_1 \big( ||u_0||_{H^s } + ||u_T||_{H^s } \big) + C_2 R^2,\\
|| \Gamma (u) -\Gamma (v) ||_{C([0,T];H^s(\T ))} &\le & C_2 R||
u-v||_{C([0,T];H^s(\T ))}.
\end{eqnarray*}
Picking $R=(2C_2)^{-1}$ and $\delta = (8C_1C_2)^{-1}$, we obtain for
$u_0,u_T$ satisfying \[ \|u_0\|_{H^s} \leq \delta, \quad \|u_T\|_{H^s} \leq
\delta \]
and $u,v\in B_R$ that
\begin{eqnarray}
||\Gamma (u)||_{C([0,T];H^s(\T ))} &\le & R\\
||\Gamma (u) -\Gamma (v) ||_{C([0,T];H^s(\T ))} & \le & \frac{1}{2}
||u-v||_{C([0,T];H^s(\T ))}.
\end{eqnarray}
It follows from the contraction mapping theorem that $\Gamma$ has a unique fixed point $u$ in $B_R$. Then $u$ satisfies \eqref{D1}-\eqref{D2} 
with $h=\Phi (u_0,u_T -\omega (u))$ and
$u(T)=u_T$, as desired. The proof of Theorem \ref{thm1} is complete.
\end{proof}
\subsection{Exponential stabilizability}
We are now concerned with the stabilization of  \eqref{D1}-\eqref{D2} with a feedback law $h=h(u)$. To guess the expression of $h$, it is convenient
to write the linearized system  \eqref{D5}-\eqref{D6} as
\ba
&&u_t=Au + B k, \label{G1}\\
&&u(0)=u_0 \label{G2} 
\ea 
where $k(t) = ( 1 -\partial _x^2)^{-1} h(t) \in L^2(0,T; H^s(\T ))$ is the new control input, and 
\be
\label{G3} B =( 1-\partial _x^2) ^{-1} a( 1 -\partial _x ^2) \in
{\mathcal L} (H^s(\T )). 
\ee 
We already noticed that $A$ is skew-adjoint in $H^s(\T )$, and that \eqref{G1}-\eqref{G2} is exactly controllable
in $H^s (\T )$ (with some control functions $k\in L^2(0,T;H^s(\T ))$) for any $s\ge 0$. If we choose the simple feedback law 
\be k = -B^{*,s}u, \label{G4} \ee 
the resulting closed-loop system 
\ba
&&u_t=Au - B B^{*,s}u, \label{G5} \\
&&u(0)=u_0  \label{G6} 
\ea 
is exponentially stable in $H^s (\T)$ (see e.g. \cite{Liu,Rosiersurvey}.)
In \eqref{G4}, $B^{*,s}$
denotes the adjoint of $B$ in ${\mathcal L} (H^s(\T ))$. Easy computations show that 
\be B^{*,s}u= (1-\partial _x^2)^{1-s} a (1-\partial _x^2)^{s-1} u  \label{G7}. \ee 
In particular
\[
B^{*,1} u = au.
\]
Let  $\tilde A = A-B B^{*,1}$, where $(BB^{*,1})u=(1-\partial _x ^2)^{-1}[a(1-\partial _x^2 ) (au)]$. Since $BB^{*,1}\in {\mathcal L} (H^s(\T ))$ and $A$ is
skew-adjoint in $H^s(\T )$, $\tilde A$ is the infinitesimal
generator of a group $\{ W_a(t)\}_{t\in \R }$ on $H^s(\T )$ (see
e.g. \cite[Theorem 1.1 p. 76]{Pazy1983}). We first show that the closed-loop system \eqref{G5}-\eqref{G6} is exponentially stable
in $H^s(\T)$ for all $s\ge 1$.
\begin{lemma}\label{linear} 
Let $a\in C^{\infty} (\T)$ with $a\ne 0$. Then
there exists  a constant $\gamma >0$  such that for any  $s\geq 1$, one can find a constant $C_s>0$ for which
the following holds for all $u_0\in H^s(\T )$ 
\be
\|W_a(t)u_0\|_{H^s} \leq C_s   e^{-\gamma t}   \|u_0\|_{H^s}  \qquad \text{for all } \ t\geq 0.
\label{decayHs}
\ee
\end{lemma}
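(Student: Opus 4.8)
The plan is to prove \eqref{decayHs} first for $s=1$, which is the natural energy level for the closed-loop system \eqref{G5}--\eqref{G6}, and then to bootstrap the decay to every $s\ge 1$ by means of the transport reformulation used in Section~\ref{sec:UCPKdVBBM}. For $s=1$, the starting point is the energy identity: since $A$ is skew-adjoint in $H^1(\T)$ and $\langle BB^{*,1}u,u\rangle_{H^1}=\langle au,B^{*,1}u\rangle_{H^1}=\|au\|_{H^1}^2$, any solution of \eqref{G5}--\eqref{G6} satisfies $\frac{d}{dt}\|W_a(t)u_0\|_{H^1}^2=-2\|a\,W_a(t)u_0\|_{H^1}^2\le 0$. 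Consequently \eqref{decayHs} with $s=1$ reduces, in the classical way, to the observability inequality
\[
\|u_0\|_{H^1}^2\le C\int_0^T\|a\,W_a(t)u_0\|_{H^1}^2\,dt,\qquad u_0\in H^1(\T),
\]
for some $T>2\pi/|c|$ and $C>2$: combined with the energy identity over $[0,T]$ it gives $\|W_a(T)u_0\|_{H^1}^2\le(1-2/C)\|u_0\|_{H^1}^2$, and iteration over the intervals $[nT,(n+1)T]$ yields exponential decay.

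To prove the observability inequality I would argue by contradiction, via the uniqueness--compactness scheme. If it fails for every $C$, pick $u_0^n$ with $\|u_0^n\|_{H^1}=1$ and $\int_0^T\|a\,u^n\|_{H^1}^2\,dt\to0$, where $u^n=W_a(\cdot)u_0^n$; by the energy identity $\|u^n(t)\|_{H^1}\to1$ uniformly on $[0,T]$. Passing to a weak-$*$ limit $u$ in $L^\infty(0,T;H^1(\T))$, one gets $a\,u\equiv0$ on $(0,T)$, so the damping term in \eqref{G5} drops out and $u$ solves the KdV--BBM equation \eqref{E1} with potential $q=c+1\in L^\infty$, while vanishing on $\omega\times(0,T)$. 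Since $T>2\pi/|c|$, Corollary~\ref{cor20} forces $u\equiv0$.

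The remaining step — reaching a contradiction with $\|u_0^n\|_{H^1}=1$ by upgrading $u_0^n\rightharpoonup 0$ to strong convergence in $H^1(\T)$ — is the main obstacle, precisely because the equation has no smoothing effect: $u^n_t=\tilde A u^n$ is only bounded in $C([0,T];L^2(\T))$, so Aubin--Lions merely yields relative compactness of $\{u^n\}$ in $C([0,T];H^\sigma(\T))$ for $\sigma<1$, not in $H^1$. To close this gap I would exploit the splitting of Section~\ref{sec:UCPKdVBBM}: writing $w^n=u^n-u^n_{xx}$, the pair $(u^n,w^n)$ solves $u^n-u^n_{xx}=w^n$ together with a damped transport equation $w^n_t+cw^n_x+a^2w^n=Kw^n$, where $K$ is a bounded operator of order $-1$. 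Because $c\ne0$ the characteristics of this transport equation sweep all of $\T$ in time $2\pi/|c|<T$ and $a^2>0$ on $\omega$, so a propagation argument along the characteristics, fed by the information $\int_0^T\|a\,w^n\|^2\to0$, forces $\{w^n_0\}$, hence $\{u^n_0\}$, to be relatively compact in $H^1(\T)$, producing the contradiction. (Alternatively one can run the same argument at the level of the resolvent: $W_a$ is a contraction semigroup on $H^1(\T)$, $\tilde A$ has compact resolvent, no eigenvalue lies on $i\mathbb{R}$ by the energy identity and Theorem~\ref{thm20}, and only the high-frequency part of the uniform resolvent bound needs the transport propagation.)

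Finally, for $s>1$ I would bootstrap from $s=1$ using the same reformulation: for $u_0\in H^s(\T)$ one has $\|u(t)\|_{H^s}=\|w(t)\|_{H^{s-2}}$ with $w=u-u_{xx}$ solving $w_t+cw_x+a^2w=Kw$. The damped transport group $S(t)$ generated by $-c\partial_x-a^2$ decays exponentially on every Sobolev space, since $S(t)w_0$ equals $w_0(\cdot-ct)$ times the smooth factor $\exp\big(-\int_0^t a^2(\cdot-c(t-\tau))\,d\tau\big)$, which together with all its $x$-derivatives is $O(e^{-\mu t})$ for large $t$, with $\mu=\frac1{2\pi}\int_\T a^2>0$. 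Since $K$ gains one derivative, Duhamel's formula $w(t)=S(t)w_0+\int_0^t S(t-\tau)Kw(\tau)\,d\tau$ combined with the decay already known in the lower-order norm $H^{s-3}(\T)=\{\|u\|_{H^{s-1}}\}$ (obtained inductively, the base case being $s=1$) propagates the bound $\|w(t)\|_{H^{s-2}}\le C_s e^{-\gamma t}\|w_0\|_{H^{s-2}}$ to $H^s$ with the same exponential rate $\gamma$. Everything here is routine; the crux of the proof is the compactness step in the observability inequality.
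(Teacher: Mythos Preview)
Your route differs substantially from the paper's, and in both steps you work harder than necessary.

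For $s=1$, the paper does not prove an observability inequality at all: it simply invokes the general result (see \cite{Liu}) that for a skew-adjoint generator $A$ and a bounded control operator $B$, exact controllability of $(A,B)$ is equivalent to exponential stability of the semigroup generated by $A-BB^{*}$. Since the exact controllability of the linearized system in $H^1(\T)$ was already established by the moment method in Step~1 of the proof of Theorem~\ref{thm1}, the case $s=1$ follows immediately. This completely sidesteps the compactness obstacle you singled out. Your proposed resolution is not convincing as written: the ``information'' $\int_0^T\|a\,w^n\|^2\,dt\to0$ is not available in any useful norm, since $a\,w^n=(1-\partial_x^2)(au^n)+a_{xx}u^n+2a_xu^n_x$ and only the first piece is small while the commutator terms are merely bounded; the characteristics heuristic does not by itself upgrade weak convergence of $w^n$ to strong $H^{-1}$ convergence. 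The resolvent alternative you mention likewise leaves the high-frequency uniform bound unproved.

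For $s>1$, the paper's argument is a two-line induction exploiting that $\tilde A$ commutes with its own semigroup: for $u_0\in H^{s+1}(\T)$ set $v_0=\tilde A u_0\in H^s(\T)$; the inductive hypothesis gives $\|\tilde A\,W_a(t)u_0\|_{H^s}=\|W_a(t)v_0\|_{H^s}\le C_se^{-\gamma t}\|v_0\|_{H^s}$, and combining this with the $H^s$ bound on $W_a(t)u_0$ and the equivalence $\|u\|_{H^{s+1}}\sim\|u\|_{H^s}+\|Au\|_{H^s}$ (valid because the symbol of $A$ behaves like $-ick$ at high frequency) yields the $H^{s+1}$ estimate with the \emph{same} rate $\gamma$. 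Interpolation handles non-integer $s$. Your transport--Duhamel bootstrap would also work in principle, but it is considerably heavier and delivers only the rate $\min(\gamma,\mu)$ with $\mu=\frac{1}{2\pi}\int_\T a^2$, so you would have to reset $\gamma$ to this minimum from the outset.
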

\begin{proof} \eqref{decayHs}  is well known for $s=1$  (see e.g. \cite{Liu}). Assume that it is true for
some $s\in \N ^*$, and pick any  $u_0\in H^{s+1}(\T )$. 
Let $v_0=\tilde A u_0\in H^s(\T )$. Then 
\[
||W_a (t) v_0||_{H^s} \le C_s e^{-\gamma t}     ||v_0||_{H^s}. 
\]  
Clearly, 
\[
W_a(t) v_0= \tilde A W_a(t) u_0= AW_a (t) u_0 - BB^{*,1}W _a (t) u_0, 
\]
hence
\[
||AW_a (t) u_0||_{H^s} \le ||W _a (t) v_0|| _{H^s} + ||BB^{*,1}||_{ {\mathcal L } (H^s) }  ||W_a (t) u_0||_{H^s}  \le C e^{-\gamma t}  ||u_0||_{H^{s+1}}     \cdot  
\]
Therefore
\[
||W_a (t) u_0||_{H^{s+1}} \le C_{s+1}  e^{-\gamma t}  ||u_0||_{H^{s+1}} ,   
\] 
as desired. The estimate \eqref{decayHs} is thus proved for any $s\in \N ^*$. It may be extended to any $s\in [1,+\infty )$ by interpolation. 
\end{proof}

Plugging the feedback law $k = -B^{*,1}u = -au$ in the nonlinear equation
gives the following closed-loop system \ba
&& u_t-u_{txx} -cu_{xxx} +(c+1)u_x +uu_x = -a (1-\partial _x^2)[au], \label{G8}\\
&& u(x,0)=u_0(x). \label{G9} \ea We first show that  the system
\eqref{G8}-\eqref{G9} is globally well-posed in the space $H^s
(\T)$ for any $s\geq 0$.
\begin{theorem} \label{global} Let $s\geq 0$  and $T>0$ be
given. For any $u_0\in H^s (\T)$, the system \eqref{G8}-\eqref{G9}
admits  a unique solution $u\in C([0,T]; H^s(\T ))$.
\end{theorem}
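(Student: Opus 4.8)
The plan is to exhibit \eqref{G8}--\eqref{G9} as a bounded, time-dependent perturbation of the BBM equation \eqref{B1}, for which Theorem~\ref{thmA} already provides global well-posedness in $H^s(\T)$, $s\ge0$. The substitution $w(x,t)=u(x+ct,t)$ reverses the change of frame \eqref{A5}; a direct computation (the terms $cw_{xxx}$ cancelling and $-cw_x+(c+1)w_x=w_x$) shows that \eqref{G8}--\eqref{G9} is equivalent to
\[
w_t-w_{txx}+w_x+ww_x=-a(x+ct)\,(1-\partial_x^2)\bigl[a(x+ct)\,w\bigr],\qquad w(\cdot,0)=u_0,
\]
that is, to \eqref{B1} with the extra term $L(t)w$, where for each $t$ the operator $L(t):=-a(x+ct)(1-\partial_x^2)[a(x+ct)\cdot]$ belongs to $\mathcal L(H^\sigma(\T))$ for every $\sigma\in\R$, with $\sup_t\|L(t)\|_{\mathcal L(H^\sigma(\T))}<\infty$. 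Since translation is an isometry of $H^s(\T)$, it suffices to prove the statement for $w$.

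First I would establish local well-posedness in $C([0,T_0];H^s(\T))$ for every $s\ge0$ by a contraction argument on the Duhamel formula $w(t)=W(t)u_0+\int_0^tW(t-\tau)\bigl[L(\tau)w-(1-\partial_x^2)^{-1}(ww_x)\bigr](\tau)\,d\tau$, exactly as in the proof of Theorem~\ref{thmA}: $W(t)=e^{tA}$ is a unitary group on $H^s(\T)$, the operator $(1-\partial_x^2)^{-1}\partial_x$ gains one derivative, the bilinear bound $\|(1-\partial_x^2)^{-1}(ww_x)\|_{H^s}\le C\|w\|_{H^s}^2$ holds for all $s\ge0$ (because $H^s(\T)$ is an algebra for $s>1/2$, and by the one-dimensional product law $H^s\cdot H^s\hookrightarrow H^{s-1}$ for $0\le s\le1/2$), and $L(\tau)w$ is a globally Lipschitz bounded linear term. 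The existence time $T_0=T_0(\|u_0\|_{H^s})$ depends only on the norm, whence the usual blow-up alternative: the maximal solution is either global, or its $H^s$-norm is unbounded in finite time.

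It remains to rule out finite-time blow-up. For $s=1$, multiplying the equation by $w$ and integrating over $\T$ makes $w_x$ and $ww_x$ disappear, turns $w_t-w_{txx}$ into $\tfrac12\tfrac{d}{dt}\|w\|_{H^1}^2$, and produces on the right-hand side $-2\int_\T(a(x+ct)w)\bigl((a(x+ct)w)-(a(x+ct)w)_{xx}\bigr)dx=-2\|a(x+ct)w\|_{H^1}^2$; hence $\tfrac{d}{dt}\|w(t)\|_{H^1}^2=-2\|a(x+ct)w(t)\|_{H^1}^2\le0$, so $\|w(t)\|_{H^1}\le\|u_0\|_{H^1}$ for all $t$ and the $H^1$-solution is global. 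For $s>1$, inserting this $H^1$-bound (and $H^1(\T)\hookrightarrow L^\infty(\T)$) into the Moser estimate $\|w^2\|_{H^{s-1}}\le C\|w\|_{L^\infty}\|w\|_{H^{s-1}}\le C\|u_0\|_{H^1}\|w\|_{H^s}$, together with $\sup_\tau\|L(\tau)\|_{\mathcal L(H^s)}<\infty$, in the Duhamel formula and applying Gronwall's lemma gives $\|w(t)\|_{H^s}\le C(T,\|u_0\|_{H^1})\|u_0\|_{H^s}$ on $[0,T]$, so the $H^s$-solution is global. Finally, for $0\le s<1$, I would argue by density: since $L(t)$ is a bounded perturbation, the low-regularity global well-posedness theory for BBM of \cite{BT,Roumegoux} applies — approximating $u_0$ in $H^s(\T)$ by $(u_0^n)\subset H^1(\T)$ with $\|u_0^n\|_{H^s}\le\|u_0\|_{H^s}$ (Fourier truncation), the corresponding solutions $w^n$ are global and converge in $C([0,T];H^s(\T))$ to the unique global $H^s$-solution with data $u_0$, the key point being that the nonlinear part $w-W(t)u_0$ of the solution is strictly smoother than the data.

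The main obstacle is exactly this last range $0\le s<1$. Below $H^1$ there is no coercive conserved (or monotone) quantity — the energy identity only produces a superlinear differential inequality for $\|w(t)\|_{L^2}^2$, which on its own permits blow-up — so globalization cannot rest on a soft a priori estimate; one must instead lean on the refined well-posedness analysis of BBM (exploiting the extra regularity of the Duhamel term) and check that it survives the addition of the bounded, time-dependent damping $L(t)$.
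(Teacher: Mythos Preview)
Your change of frame $w(x,t)=u(x+ct,t)$, reducing \eqref{G8} to BBM with a time-dependent damping, is a legitimate simplification and differs from the paper, which works throughout with the damped semigroup $W_a(t)=e^{t(A-BB^{*,1})}$. One slip: the operator $L(t)=-a(x+ct)(1-\partial_x^2)[a(x+ct)\,\cdot\,]$ as you defined it is \emph{not} bounded on $H^\sigma(\T)$ (it loses two derivatives); it is $(1-\partial_x^2)^{-1}L(t)$ that is bounded, and since that is what actually enters Duhamel the contraction argument is unaffected. Your energy identity for $s=1$ and your Moser--Gronwall argument for $s>1$ are correct; the latter is in fact more direct than the paper's incremental bootstrap in steps of $1/8$ via Lemma~\ref{bilinear}.

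The genuine gap is the range $0\le s<1$, which you rightly flag. The density argument as you state it is circular: global existence of each $w^n$ in $H^1$ gives no uniform-in-$n$ control of $\|w^n(t)\|_{H^s}$ on all of $[0,T]$, because $\|u_0^n\|_{H^1}\to\infty$ in general when $u_0\notin H^1$; without such a uniform bound you cannot iterate the local Lipschitz estimate past the first interval $[0,T_0(\|u_0\|_{H^s})]$, and the convergence of $w^n$ in $C([0,T];H^s)$ is precisely what is in question. The smoothing of the Duhamel term is indeed the key, but it must be converted into an a~priori bound, not merely a limiting statement. The paper does this by a high--low frequency decomposition $u_0=w_0+v_0$: the high-frequency piece $v_0$ is chosen small in $H^s$, so that the full nonlinear equation for $v$ is global on $[0,T]$ purely by the local theory (since the local time $T^*\to\infty$ as the data norm tends to $0$); the low-frequency piece $w_0$ lies in $H^1$, and the remainder $w=u-v$ solves a linearly coupled equation whose $H^1$ norm is controlled by an energy estimate plus Gronwall. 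This splitting is the concrete content your last paragraph gestures toward but does not supply; it would transplant unchanged to your moving-frame formulation.
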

The following bilinear estimate from \cite{Roumegoux}
will be very helpful.
\begin{lemma}\label{bilinear}
Let  $w\in H^r (\T) $ and $v\in H^{r'}(\T)$ with $0\leq r\leq s$,
$0\leq r'\leq s$ and $\ 0\leq 2s -r-r' < \frac14$.  Then
\[ \|(1-\partial ^2_x)^{-1} \partial _x (wv)\|_{H^s}\leq c_{r,r',s}
\|w\|_{H^r} \|v\|_{H^{r'}} .\] In particular, if $w\in H^r (\T )$ and $v\in H^s(\T )$ with $0\leq r\leq s< r+\frac14$, then
\[ \|(1-\partial ^2_x)^{-1} \partial _x (wv)\|_{H^s}\leq c_{r,s}
\|w\|_{H^r}\|v\|_{H^s}. \] 
\end{lemma}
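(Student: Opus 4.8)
The plan is to peel off the smoothing operator and reduce to a pure product estimate in Sobolev spaces on $\T$, and then to prove that estimate by a frequency decomposition whose combinatorics are governed precisely by the hypothesis $2s-r-r'<\frac14$. On $\T$ the operator $(1-\partial_x^2)^{-1}\partial_x$ acts on Fourier coefficients by the multiplier $m(k)=ik/(1+k^2)$, which satisfies $|m(k)|\le\langle k\rangle^{-1}$ for every $k\in\Z$ (here $\langle k\rangle=(1+k^2)^{1/2}$); hence $\|(1-\partial_x^2)^{-1}\partial_x f\|_{H^s}\le\|f\|_{H^{s-1}}$ and it is enough to prove $\|wv\|_{H^{s-1}}\le c_{r,r',s}\|w\|_{H^r}\|v\|_{H^{r'}}$. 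Before doing so I record the elementary consequences of the hypotheses: subtracting $r'\le s$ (resp.\ $r\le s$) from $2s-r-r'<\frac14$ gives $0\le s-r<\frac14$ and $0\le s-r'<\frac14$, while $s\ge 0$ gives $r+r'>2s-\frac14\ge s-\frac12$. The ``in particular'' clause is just the special case $r'=s$, for which $2s-r-r'=s-r$.

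\textbf{Littlewood--Paley decomposition.} Write $w=\sum_jP_jw$, $v=\sum_{j'}P_{j'}v$, where $P_j$ localizes to frequencies $|k|\sim 2^j$, and split $wv$ into the low--high part $\sum_j(P_{<j-5}w)(P_jv)$, the high--low part $\sum_j(P_jw)(P_{<j-5}v)$, and the high--high part $\sum_{|j-j'|\le 5}(P_jw)(P_{j'}v)$ (the finitely many low-frequency terms are harmless). I will use the Bernstein inequalities on the circle $\|P_jf\|_{L^\infty}\lesssim 2^{j/2}\|P_jf\|_{L^2}$ and $\|P_mf\|_{L^2}\lesssim 2^{m/2}\|f\|_{L^1}$, which are immediate from Cauchy--Schwarz over the $\sim 2^j$ (resp.\ $\sim 2^m$) relevant Fourier modes. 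For the low--high term the product lives at frequency $\sim 2^j$, so $\|(P_{<j-5}w)(P_jv)\|_{H^{s-1}}\lesssim\langle 2^j\rangle^{s-1}\|P_{<j-5}w\|_{L^\infty}\|P_jv\|_{L^2}$; summing the Bernstein bounds, $\|P_{<j-5}w\|_{L^\infty}\lesssim\langle 2^j\rangle^{(1/2-r)_+ +\epsilon}\|w\|_{H^r}$ and $\|P_jv\|_{L^2}=\langle 2^j\rangle^{-r'}\beta_j\|v\|_{H^{r'}}$ with $(\beta_j)\in\ell^2$, one is left with a dyadic series in $j$ whose exponent is $s-1+(1/2-r)_+ +\epsilon-r'$; this is $\le 0$ because $r'>s-\frac14$ when $r\ge\frac12$, and because $r+r'>s-\frac12$ when $r<\frac12$, the $\epsilon$ of slack coming from the strict gap in $2s-r-r'<\frac14$. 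The high--low term is identical after interchanging $r$ and $r'$.

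\textbf{The high--high interaction and the main obstacle.} This is the delicate case, because the product $(P_jw)(P_{j'}v)$ with $|j-j'|\le 5$ may have output frequency $2^m$ as small as $O(1)$, and the naive bound $\|(P_jw)(P_{j'}v)\|_{L^2}\le\|P_jw\|_{L^2}\|P_{j'}v\|_{L^\infty}$ wastes a half-derivative and fails to be summable when $r+r'$ is near its lower bound. Instead one writes $\|P_m((P_jw)(P_{j'}v))\|_{L^2}\lesssim 2^{m/2}\|P_jw\|_{L^2}\|P_{j'}v\|_{L^2}$, sums over $m\le j+5$ at a cost of $\langle 2^j\rangle^{(2s-1)_+ +\epsilon}$, and then sums over $j\sim j'$ using $\sum_j\alpha_j^2\beta_j^2\le\|\alpha\|_{\ell^2}^2\|\beta\|_{\ell^2}^2$; the net dyadic exponent is $(2s-1)_+ +\epsilon-2r-2r'$, which is nonpositive exactly because $r+r'>2s-\frac14$ (with $\epsilon$ needed only at the endpoint $s=\frac12$). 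The main point to get right is thus this bookkeeping in the high--high regime, where one must use the $L^1\to L^2$ Bernstein inequality rather than a product-in-$L^\infty$ estimate, and where it is the \emph{strict} inequality $2s-r-r'<\frac14$ — rather than the borderline $\le\frac12$ — that furnishes the room to sum an otherwise logarithmically divergent series. An equivalent, Fourier-only route avoids Littlewood--Paley altogether: estimate the discrete convolution $\sum_{k_1\in\Z}\langle k\rangle^{2(s-1)}\langle k_1\rangle^{-2r}\langle k-k_1\rangle^{-2r'}$ using the elementary bound $\sum_{k_1}\langle k_1\rangle^{-2\alpha}\langle k-k_1\rangle^{-2\beta}\lesssim\langle k\rangle^{-2\min(\alpha,\beta,\alpha+\beta-1/2)}$ (valid for $\alpha,\beta\ge 0$, $\alpha+\beta>\frac12$) and then apply Cauchy--Schwarz; the same three numerical constraints — $r,r'\ge s-1$, $r+r'\ge 0$, $r+r'>s-\frac12$ — reappear, all of them consequences of the hypothesis.
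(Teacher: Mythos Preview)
The paper does not prove this lemma; it is quoted from Roum\'egoux \cite{Roumegoux} and invoked without argument. So there is no proof in the paper to compare yours against.

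Your approach is the standard one and is correct in substance: peel off the order~$-1$ multiplier $(1-\partial_x^2)^{-1}\partial_x$ to reduce to the product estimate $\|wv\|_{H^{s-1}}\lesssim\|w\|_{H^r}\|v\|_{H^{r'}}$, then run a paraproduct decomposition. The numerology you extract from the hypotheses ($0\le s-r,\,s-r'<\tfrac14$ and $r+r'>2s-\tfrac14\ge s-\tfrac12$) is exactly what makes each of the three pieces summable, and your alternative Fourier-only route at the end is equally valid and probably closer in spirit to how this is written up in \cite{Roumegoux}. One bookkeeping point worth tightening: in the high--high interaction you invoke $\sum_j\alpha_j^2\beta_j^2\le\|\alpha\|_{\ell^2}^2\|\beta\|_{\ell^2}^2$ as though the pieces $(P_jw)(P_{j'}v)$ with different $j$ were orthogonal in $H^{s-1}$, but their output frequencies all overlap near $k=0$. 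The clean way is to use the triangle inequality $\|HH\|_{H^{s-1}}\le\sum_j\|(P_jw)(P_{j'}v)\|_{H^{s-1}}$ first, bound each summand by $\langle 2^j\rangle^{((2s-1)_++\epsilon)/2-r-r'}\alpha_j\beta_j$ as you do, and then sum via Cauchy--Schwarz $\sum_j\alpha_j\beta_j\le\|\alpha\|_{\ell^2}\|\beta\|_{\ell^2}$. The exponent condition that emerges is the same one you state, so the argument closes.
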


\noindent {\em Proof of Theorem \ref{global}:}

\smallskip
\noindent
 \emph{Step 1}:    The system is locally
well-posed in the space $H^s (\T)$:

\medskip
 \emph{Let $s\geq 0$ and $R>0$ be given. There exists a $T^*$
depending only on $s$ and $R$  such that for any $u_0\in H^s (\T)$
with 
\[ \|u_0\|_{H^s}  \leq R,\] 
the system \eqref{G8}-\eqref{G9} admits
a unique solution $u\in C([0,T^*]; H^s (\T))$. Moreover,  $T^*\to
\infty $ as $R\to 0$.}

\bigskip
Rewrite \eqref{G8}-\eqref{G9} in its integral form \be u(t) = W_a(t)
u_0 -\int_0^t W_a(t- \tau  ) (1-\partial _x ^2) ^{-1} (uu_x)(\tau )
d\tau . \label{G50} \ee
For given $\theta>0$, define a map  $\Gamma
$ on $C([0, \theta ]; H^s (\T))$ by
\[ \Gamma (v)=  W_a(t)
u_0 -\int_0^t W_a(t- \tau  ) (1-\partial _x ^2) ^{-1} (vv_x)(\tau )
d\tau \] for any $v\in C([0,\theta]; H^s(\T))$. Note that, according
to Lemma \ref{linear} and Lemma \ref{bilinear},
\[ \| W_a(t)u_0\|_{C([0,\theta];H^s(\T))} \leq C_s\| u_0\|_{H^s} , \]
and
\begin{eqnarray*}
\left \| \int_0^t W_a(t- \tau  ) (1-\partial _x ^2) ^{-1}
(vv_x)(\tau ) d\tau \right \| _{C([0, \theta];H^s (\T))} 
&\leq& C_s\theta \sup_{0\leq t\leq \theta}\|
(1-\partial _x ^2) ^{-1} (vv_x)(t )\|_{H^s} \\
 &\leq& \frac{C_s c_{s,s}}{2} \theta \| v\|_{C([0,\theta]; H^s (\T))}^2 .
\end{eqnarray*}
Thus, for given $R>0$ and $u_0\in H^s (\T)$ with $\|u_0\|_{H^s} \leq
R$, one can choose $T^*=[2c_{s,s}(1+C_s)  R]^{-1}$ such that $\Gamma $ is a
contraction mapping in the ball $$B:= \{ v\in C([0, T^*]; H^s(\T));
\ \| v\|_{C([0,T^*]; H^s (\T))} \leq 2C_s R\} $$  whose fixed point
$u$ is the desired solution.

 \medskip
 \noindent
 {\em Step 2}:  The system is globally well-posed in the space $H^s (\T)$ for any $s\geq 1$.

\medskip
 To this end, it suffices to establish the following global \emph{a priori}
 estimate for smooth solutions of the system \eqref{G8}-\eqref{G9}:

 \medskip
\emph{Let $s\geq 1$ and $T>0$ be given. There exists a continuous
 nondecreasing
 function
 \[ \alpha _{s,T} : \R^+\to \R^+\]
 such that
any smooth solution $u$ of the system \eqref{G8}-\eqref{G9}
 satisfies
 \begin{equation}\label{priori}
 \sup _{0\leq t\leq T}\|u(\cdot, t)\|_{H^s} \leq \alpha _{s,T}
 (\|u_0\|_{H^s}).
 \end{equation}
}

 \medskip
 Estimate \eqref{priori} holds obviously  when $s=1$ because of the
  energy identity
\[
||u(t)||^2_{H^1}  - ||u_0||^2_{H^1}  = - 2 \int_0^t ||a u(\tau )||^2_{H^1}
d\tau  \qquad  \forall   t\geq 0.
\]
When $1<s\leq s_1:=1+\frac{1}{8}$, applying Lemma \ref{linear} and
Lemma \ref{bilinear} to (\ref{G50}) yields that for any $0< t\leq T$,
\begin{eqnarray*}
 \|u(\cdot, t)\|_{H^s} 
&\leq&  C_s\|u_0\| _{H^s}  +\frac{C_s c_{1,s}}{2} \int ^t_0 \| u(\cdot , \tau)\|_{H^1} \|u(\cdot, \tau )\|_{H^s} d \tau \\
&\leq& C \|u_0\| _{H^s} + C \alpha _{1,T} (\| u_0\|_{H^1}) \int ^t_0 \| u(\cdot, \tau )\|_{H^s} d\tau . 
\end{eqnarray*}
Estimate (\ref{priori}) for $1<s\leq s_1$ follows by using
Gronwall's lemma.  Similarly, for $s_1 < s\leq s_2 :=
1+\frac{2}{8}$,
\begin{eqnarray*}
\| u(\cdot, t)\|_{H^s} 
&\leq&  C_s \|u_0\| _{H^s}  +\frac{C_s c_{s_1,s} }{2}
 \int ^t_0 \|  u(\cdot , \tau)\|_{H^{s_1}} \| u(\cdot, \tau )\|_{H^s} d \tau \\
&\leq&  C \|u_0\| _{H^s} + C
\alpha _{s_1,T} (\| u_0\|_{H^{s_1}} ) \int ^t_0 \| u(\cdot, \tau )\|_{H^s}
d\tau .
\end{eqnarray*}
Estimate (\ref{priori}) thus holds for $1<s\leq s_2$. Continuing
this argument, we can show that the estimate (\ref{priori}) holds
for $1<s\leq s_k:=1+\frac{k}{8}$ for any $k\geq 1$.

\medskip
\noindent {\em Step 3:} \emph{The system \eqref{G8}-\eqref{G9} is
globally well-posed in the space $H^s (\T)$ for any $0\leq s<1$}.

\medskip
To see it is true, as in \cite{Roumegoux}, we decompose  any $u_0\in H^s (\T)$ as
\[ u_0 =\sum_{k\in \Z } \hat u_k e^{ikx}= \sum_{|k| \le k_0} + \sum_{|k| > k_0} =: w_0 +v_0 \] 
with $ v_0\in H^s (\T)$ satisfying \[
\|v_0\| _{H^s} \leq \delta \] for some small $\delta >0$ to be chosen, and
$w_0\in H^1(\T)$. Consider the following two initial value problems
\begin{equation}\label{m-1}
\begin{cases}
 v_t-v_{txx} -cv_{xxx} +(c+1)v_x +vv_x = -a (1-\partial _x^2)[av], \\
v(x,0)=v_0(x)
\end{cases}
\end{equation}
and
\begin{equation}\label{m-2}
\begin{cases}
 w_t-w_{txx} -cw_{xxx} +(c+1)w_x +ww_x  +(vw)_x= -a (1-\partial _x^2)[aw], \\
w(x,0)=w_0(x).
\end{cases}
\end{equation}
By the local well-posedness established in Step 1,  for given $T>0$,
if $\delta $ is small enough, then (\ref{m-1}) admits a unique
solution $v\in C([0,T]; H^s (\T))$. For (\ref{m-2}), with $v\in
C([0,T]; H^s (\T))$, by using Lemma \ref{linear}, the estimate
\[
||(1-\partial _x ^2)^{-1} \partial _x (wv) ||_{H^1} \le C|| w v ||_{L^2} \le C ||w||_{H^1}  ||v||_{H^s}
\]
and the contraction mapping principle, one can show
first  that it is locally well-posed in the space $H^1(\T)$.  Then,
for any smooth solution $w$ of (\ref{m-2}) it holds that
\[ \frac12 \frac{d}{dt} \| w(\cdot, t)\|^2_{H^1} -\int_{\T} 
v(x,t)w(x,t)w_x(x,t) dx = -  \| a(\cdot) w (\cdot, t)\|_{H^1}^2  , \]
which implies that
\[ \| w(\cdot, t)\| ^2_{H^1}\leq \|w_0\| ^2_{H^1} \exp \left (C\int ^t_0 \|
v(\cdot, \tau ) \|_{L^2 } d\tau  \right ) \] for any $t\geq 0$.
The above estimate can be extended to any $w_0\in H^1(\T)$ by a density argument.
Consequently, for $w_0\in H^1 (\T)$ and $v\in C([0,T]; H^s(\T))$,
(\ref{m-2}) admits a unique solution $w\in C([0,T]; H^1 (\T))$.
Thus $u=w+v\in C([0,T]; H^s (\T))$ is the desired solution of system
\eqref{G8}-\eqref{G9}. The proof of Theorem 6.4 is complete.\qed

\medskip
Next we show that the system \eqref{G8}-\eqref{G9} is locally
exponentially stable in $H^s (\T)$ for any $s\geq 1$.
\begin{proposition}\label{locale} 
Let $s\geq 1$ be given and $\gamma >0$ be as given in Lemma \ref{linear}. Then there exist two 
numbers $\delta >0$ and $C'_s$
depending only on $s$ such that for any
$u_0\in H^s(\T)$ with \[ \|u_0\|_{H^s} \leq \delta ,\] the corresponding
solution $u$ of the system \eqref{G8}-\eqref{G9} satisfies \[
\|u(\cdot, t) \|_{H^s} \leq C'_s   e^{-\gamma t}  \|u_0\|_{H^s}  \quad \forall 
t\ge 0.\]
\end{proposition}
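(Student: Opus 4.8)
The plan is to treat the nonlinearity $uu_x$ as a small perturbation of the exponentially stable linear flow $W_a(t)$, relying on the Duhamel formula \eqref{G50}, the bilinear estimate of Lemma \ref{bilinear}, and a continuation argument in time. First I would record that, by Theorem \ref{global}, for every $u_0\in H^s(\T)$ the solution $u$ of \eqref{G8}-\eqref{G9} is defined on all of $[0,+\infty)$ and the map $t\mapsto\|u(t)\|_{H^s}$ is continuous; this is what makes the bootstrap below legitimate.

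Next I would estimate the Duhamel integral. Writing $(1-\partial_x^2)^{-1}(uu_x)=\tfrac12(1-\partial_x^2)^{-1}\partial_x(u^2)$ and applying Lemma \ref{bilinear} with $r=r'=s$ (so that $2s-r-r'=0<\tfrac14$), one gets a constant $\kappa_s>0$ such that $\|(1-\partial_x^2)^{-1}(uu_x)(\tau)\|_{H^s}\le\kappa_s\|u(\tau)\|_{H^s}^2$. Inserting this into \eqref{G50} and applying the decay estimate \eqref{decayHs} of Lemma \ref{linear} — whose rate $\gamma$ is one and the same for all $s\ge1$ — to both the free term $W_a(t)u_0$ and the kernel $W_a(t-\tau)$, I obtain
\[
\|u(t)\|_{H^s}\le C_s e^{-\gamma t}\|u_0\|_{H^s}+C_s\kappa_s\int_0^t e^{-\gamma(t-\tau)}\|u(\tau)\|_{H^s}^2\,d\tau,\qquad t\ge0.
\]
Setting $\phi(t):=e^{\gamma t}\|u(t)\|_{H^s}$, multiplying through by $e^{\gamma t}$ and using $e^{\gamma\tau}\|u(\tau)\|_{H^s}^2=e^{-\gamma\tau}\phi(\tau)^2$ turns this into the scalar inequality
\[
\phi(t)\le C_s\|u_0\|_{H^s}+C_s\kappa_s\int_0^t e^{-\gamma\tau}\phi(\tau)^2\,d\tau,\qquad t\ge0.
\]

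Finally I would close the estimate by a standard continuity argument. Assuming (harmlessly) that $C_s\ge1$ and $u_0\ne0$, put $M:=2C_s\|u_0\|_{H^s}$ and $T^*:=\sup\{T\ge0;\ \phi\le M\text{ on }[0,T]\}$; since $\phi(0)=\|u_0\|_{H^s}<M$ and $\phi$ is continuous, $T^*>0$. On $[0,T^*)$ the inequality above gives $\phi(t)\le C_s\|u_0\|_{H^s}+C_s\kappa_s M^2/\gamma=C_s\|u_0\|_{H^s}\bigl(1+4C_s^2\kappa_s\gamma^{-1}\|u_0\|_{H^s}\bigr)$, so that if $\delta:=\gamma(4C_s^2\kappa_s)^{-1}$ and $\|u_0\|_{H^s}\le\delta$, then $\phi\le M$ on $[0,T^*)$, hence on $[0,T^*]$ by continuity; this forces $T^*=+\infty$, i.e. $\|u(t)\|_{H^s}\le 2C_s e^{-\gamma t}\|u_0\|_{H^s}$ for all $t\ge0$, which is the claim with $C'_s=2C_s$. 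I do not expect a genuine obstacle here: the only delicate points are checking that Lemma \ref{bilinear} does yield an honestly quadratic bound at regularity $s$ (it does, via $r=r'=s$), that the rate $\gamma$ in Lemma \ref{linear} is $s$-independent so it survives intact into the nonlinear estimate, and that the global well-posedness of Theorem \ref{global} is what licenses the bootstrap over the whole half-line $[0,+\infty)$.
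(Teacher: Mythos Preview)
Your proof is correct and follows essentially the same approach as the paper: both apply the linear decay of Lemma \ref{linear} and the bilinear estimate of Lemma \ref{bilinear} to the Duhamel formula \eqref{G50}, arriving at the same integral inequality for $e^{\gamma t}\|u(t)\|_{H^s}$. The only difference is in how the estimate is closed: the paper sets up the weighted space $Y_s=\{v:\ \sup_{t\ge 0}e^{\gamma t}\|v(t)\|_{H^s}<\infty\}$ and runs a contraction mapping argument for $\Gamma$ directly in $Y_s$ (thus producing the solution together with its decay), whereas you invoke the global well-posedness of Theorem \ref{global} first and then close by a continuity/bootstrap argument on the known solution; both routes yield $C'_s=2C_s$ and $\delta$ of order $\gamma/C_s^2$.
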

\begin{proof} We proceed as in \cite{PR}. 
As in the proof of Theorem \ref{global}, rewrite the
system \eqref{G8}-\eqref{G9} in its integral form
\[ 
u(t)=W_a (t) u_0 -\frac12 \int ^t_0 W_a (t-\tau) ( 1 -\partial
^2_x)^{-1}\partial _x (u^2)(\tau ) d\tau \] 
and consider the map
\[ \Gamma (v):= W_a (t) u_0 -\frac12 \int ^t_0 W_a (t-\tau) (1-\partial
^2_x)^{-1}\partial _x (v^2)(\tau ) d\tau .\] 
For given $s\geq 1$, by
Lemma \ref{linear} and Lemma \ref{bilinear}, there exists a constant $C_s>0$ such that
\begin{eqnarray*}
\| \Gamma (v) (\cdot, t)\|_{H^s} & \leq& C_s e^{-\gamma t} \|u_0\|_{H^s}
+\frac{C_sc_{s,s}}{2} \int ^t_0 e^{-\gamma (t-\tau)} \| v(\cdot, \tau )\|_{H^s}^2 d\tau \\
& \leq& C_s e^{-\gamma t} \|u_0\|_{H^s}
+\frac{C_sc_{s,s}}{2}  \sup _{0\leq \tau \leq t} \|e^{\gamma \tau } v(\cdot, \tau )\|_{H^s}^2 \int ^t_0 e^{-\gamma (t+\tau)} d\tau \\
&\leq& C_s e^{-\gamma t} \|u_0\|_{H^s} + \frac{C_sc_{s,s}}{2\gamma}e^{-\gamma
t} (1-e^{-\gamma t}) \sup _{0\leq \tau \leq t} \|e^{\gamma \tau}
v(\cdot, \tau )\|_{H^s}^2 \end{eqnarray*} 
for any $t\geq 0$. Let us introduce the Banach space
\[
Y_s:=\{ v\in C([0, \infty ); H^s (\T)): \ || v ||_{Y_s}:=
\sup_{0\leq t< \infty} \|
e^{\gamma t}  v(\cdot, t)\|_{H^s} < \infty \} .\]  

For any $v\in Y_s$,
\[ \| \Gamma (v)\|_{Y_s} \leq C_s \| u_0 \|_{H^s}  +\frac{C_sc_{s,s}}{2\gamma } \|
v\|_{Y_s}^2 .\] 
Choose
\[ 
\delta =\frac{\gamma}{4C_s^2c_{s,s}}, \qquad 
R=2C_s \delta .\]
Then, if $\|u_0\|\leq \delta$, for any $v\in Y_s$ with $\|v\|_{Y_s}
\leq R$,
\[ \|\Gamma (v)\|_{Y_s}\leq C_s \delta + \frac{C_sc_{s,s}}{2\gamma}(2C_s\delta)R
 \leq R.\] 
Moreover, for any $v_1,v_2\in Y_s$ with $\|v_1\|_{Y_s}\leq R$ and $\|v_2\|_{Y_s} \leq R$,
\[ \| \Gamma (v_1)-\Gamma (v_2)\|_{Y_s} \leq \frac12 \| v_1-v_2
\|_{Y_s} .\] 
The map $\Gamma$ is a contraction whose fixed point
$u\in Y_s $ is the desired solution satisfying
\[ \| u(\cdot, t)\|_{H^s} \leq 2C_s e^{-\gamma t} \| u_0\|_{H^s} \]
for any $t\geq 0$.
\end{proof}

Now we turn to the issue of the global stability of the system
\eqref{G8}-\eqref{G9}.  First we show that the system
\eqref{G8}-\eqref{G9} is globally exponentially stable in the space
$H^1(\T)$.
\begin{theorem}
\label{thm10} Let $a\in C^\infty (\T  )$ with $a\ne 0$, and let $\gamma >0$ be as in Lemma \ref{linear}.  Then for
any $R_0>0$, there exists a constant $C^*>0$ such that for any $u_0\in
H^1(\T)$ with $||u_0||_{H^1}\le R_0$, the corresponding solution $u$ of
\eqref{G8}-\eqref{G9} satisfies 
\be ||u(\cdot,t)||_{H^1} \le C^*e^{-\gamma t} ||u_0||_{H^1}\qquad \text{ for all } t\ge 0. \label{G10} \ee
\end{theorem}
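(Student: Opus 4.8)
The plan is to combine three ingredients: the energy identity $\|u(\cdot,t)\|_{H^1}^2=\|u_0\|_{H^1}^2-2\int_0^t\|a\,u(\cdot,\tau)\|_{H^1}^2\,d\tau$ already recorded in the proof of Theorem~\ref{global}, which shows that $t\mapsto\|u(\cdot,t)\|_{H^1}$ is nonincreasing; the local exponential decay of Proposition~\ref{locale} with $s=1$, which furnishes $\delta_0>0$ and $C_1'>0$ such that $\|u_0\|_{H^1}\le\delta_0$ implies $\|u(\cdot,t)\|_{H^1}\le C_1'e^{-\gamma t}\|u_0\|_{H^1}$ for all $t\ge0$; and a \emph{uniform attraction lemma} that I would isolate first: for every $R_0>0$ there is $T_0=T_0(R_0)>0$ such that $\|u_0\|_{H^1}\le R_0$ implies $\|u(\cdot,T_0)\|_{H^1}\le\delta_0$. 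Granting the lemma, \eqref{G10} follows at once: for $t\ge T_0$, Proposition~\ref{locale} applied at time $T_0$ together with $\|u(\cdot,T_0)\|_{H^1}\le\|u_0\|_{H^1}$ gives $\|u(\cdot,t)\|_{H^1}\le C_1'e^{\gamma T_0}e^{-\gamma t}\|u_0\|_{H^1}$, while for $0\le t\le T_0$ monotonicity gives $\|u(\cdot,t)\|_{H^1}\le e^{\gamma T_0}e^{-\gamma t}\|u_0\|_{H^1}$; hence \eqref{G10} holds with $C^\ast=\max(1,C_1')e^{\gamma T_0}$.

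To prove the attraction lemma I would argue by contradiction. First fix a time $T>2\pi/|c|$ large enough that the \emph{linear} observability inequality $\|v_0\|_{H^1}^2\le C_{\mathrm{lin}}\int_0^T\|a\,W_a(t)v_0\|_{H^1}^2\,dt$ holds for all $v_0\in H^1(\T)$; this is legitimate since Lemma~\ref{linear} gives $\|W_a(T)v_0\|_{H^1}\le\kappa\|v_0\|_{H^1}$ with $\kappa<1$ for $T$ large, and the energy identity $\frac{d}{dt}\|W_a(t)v_0\|_{H^1}^2=-2\|a\,W_a(t)v_0\|_{H^1}^2$ (which uses the skew-adjointness of $A$ in $H^1(\T)$ and $B^{\ast,1}w=aw$) then yields $\int_0^T\|a\,W_a(t)v_0\|_{H^1}^2\,dt\ge\frac{1-\kappa^2}{2}\|v_0\|_{H^1}^2$. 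If the lemma failed for some $R_0$, then for each $n$ there would be $u_0^n$ with $\|u_0^n\|_{H^1}\le R_0$ and $\|u^n(\cdot,n)\|_{H^1}>\delta_0$, $u^n$ being the solution of \eqref{G8}--\eqref{G9} given by Theorem~\ref{global}; by monotonicity $\delta_0<\|u^n(\cdot,t)\|_{H^1}\le R_0$ on $[0,n]$. The energy identity over $[0,n]$ gives $\int_0^n\|a\,u^n(\cdot,t)\|_{H^1}^2\,dt\le\frac12(R_0^2-\delta_0^2)$, so a pigeonhole argument over the $\lfloor n/T\rfloor$ windows $[jT,(j+1)T]$ produces integers $k_n$ with $(k_n+1)T\le n$ and $\int_{k_nT}^{(k_n+1)T}\|a\,u^n(\cdot,t)\|_{H^1}^2\,dt\to0$. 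Since \eqref{G8} is autonomous, $\tilde u^n(\cdot,t):=u^n(\cdot,t+k_nT)$ solves \eqref{G8}--\eqref{G9} on $[0,T]$ with $\delta_0<\|\tilde u^n(\cdot,0)\|_{H^1}\le R_0$ and $\int_0^T\|a\,\tilde u^n(\cdot,t)\|_{H^1}^2\,dt\to0$.

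Now $\{\tilde u^n\}$ is bounded in $C([0,T];H^1(\T))$, and writing \eqref{G8} as $\partial_t\tilde u^n=(1-\partial_x^2)^{-1}[c\,\partial_x^3\tilde u^n-(c+1)\partial_x\tilde u^n-\tilde u^n\partial_x\tilde u^n-a(1-\partial_x^2)(a\tilde u^n)]$ shows $\{\partial_t\tilde u^n\}$ bounded in $C([0,T];L^2(\T))$; by interpolation the $\tilde u^n$ are equicontinuous from $[0,T]$ into $H^s(\T)$ and pointwise relatively compact there, so by Arzel\`a--Ascoli, after extraction, $\tilde u^n\to\tilde u$ in $C([0,T];H^s(\T))$ for a fixed $s\in(7/8,1)$ and $\tilde u^n\cf\tilde u$ weakly-$\ast$ in $L^\infty(0,T;H^1(\T))$. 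Passing to the limit in \eqref{G8} (only the nonlinear term $\tfrac12\partial_x(\tilde u^n)^2$ requires care, and it converges in $C([0,T];H^{s-1}(\T))$ since $H^s(\T)$ is an algebra), $\tilde u$ solves \eqref{G8}; and $\int_0^T\|a\tilde u\|_{H^1}^2\,dt\le\liminf_n\int_0^T\|a\tilde u^n\|_{H^1}^2\,dt=0$, so $a\tilde u\equiv0$, i.e. $\tilde u\equiv0$ on $\omega\times(0,T)$, and the term $a(1-\partial_x^2)(a\tilde u)$ vanishes identically. Hence $\tilde u\in L^\infty(0,T;H^1(\T))$ solves the KdV--BBM equation \eqref{E1} with potential $q=c+1+\tilde u\in L^\infty(0,T;L^\infty(\T))$, vanishing on $\omega\times(0,T)$ with $T>2\pi/|c|$; Corollary~\ref{cor20} forces $\tilde u\equiv0$.

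It remains to turn $\tilde u\equiv0$ into the contradiction. Since $\tilde u^n\to0$ in $C([0,T];H^s(\T))$ we get $(\tilde u^n)^2\to0$ in $C([0,T];H^s(\T))$; the bilinear estimate of Lemma~\ref{bilinear} (which for $s$ close to $1$ sends $(\tilde u^n)^2\in H^s(\T)$ into $H^1(\T)$) together with the boundedness of the group $\{W_a(t)\}$ on $H^1(\T)$ shows that the Duhamel term in $\tilde u^n(\cdot,t)=W_a(t)\tilde u^n(\cdot,0)-\tfrac12\int_0^tW_a(t-\tau)(1-\partial_x^2)^{-1}\partial_x(\tilde u^n)^2(\tau)\,d\tau$ tends to $0$ in $C([0,T];H^1(\T))$. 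Consequently $\int_0^T\|a\,W_a(t)\tilde u^n(\cdot,0)\|_{H^1}^2\,dt\to0$, so the linear observability inequality forces $\|\tilde u^n(\cdot,0)\|_{H^1}\to0$, contradicting $\|\tilde u^n(\cdot,0)\|_{H^1}>\delta_0$. This proves the attraction lemma, hence Theorem~\ref{thm10}. I expect the crux to be exactly the loss of $H^1$-compactness along the boundary data $\tilde u^n(\cdot,0)$: it is handled by first using Corollary~\ref{cor20} to annihilate the weak limit, after which the regularity-gaining nonlinear term becomes negligible and the whole estimate collapses onto the linear observability inequality, valid for \emph{all} data by Lemma~\ref{linear}.
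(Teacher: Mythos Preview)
Your proof is correct and follows essentially the same strategy as the paper: a contradiction argument in which compactness produces a weak limit, Corollary~\ref{cor20} (the UCP for KdV--BBM) annihilates that limit, and then the nonlinear Duhamel term becomes negligible so that the linear observability inequality derived from Lemma~\ref{linear} yields the contradiction. The organizational packaging differs slightly. The paper first proves a nonlinear observability inequality (Proposition~\ref{prop10}) directly on a fixed interval $[0,T]$, normalizing the contradicting sequence by $\alpha_n=\|u_{0,n}\|_{H^1}$ so that $\|v_n(0)\|_{H^1}=1$; this yields decay at some rate $\kappa(R_0)$, which is then upgraded to $\gamma$ via Proposition~\ref{locale}. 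You instead prove a uniform attraction lemma by a pigeonhole/time-shift argument on growing intervals $[0,n]$, which builds the lower bound $\|\tilde u^n(\cdot,0)\|_{H^1}>\delta_0$ into the contradiction hypothesis and thereby avoids the normalization step; Proposition~\ref{locale} then gives the rate $\gamma$ directly. Both routes rest on exactly the same analytic core (Aubin--Lions compactness, Corollary~\ref{cor20}, the linearizability estimate via Lemma~\ref{bilinear}, and the linear observability from Lemma~\ref{linear}), so the difference is cosmetic rather than substantive.
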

Theorem \ref{thm10} is a direct consequence of the
following observability inequality.
\begin{proposition}\label{prop10}
Let $R_0>0$ be given. Then there exist two positive numbers  $T$ and
$\beta $ such that for any $u_0\in H^1(\T )$  satisfying
\be
\label{G11} ||u_0||_{H^1} \le R_0, \ee
 the corresponding solution $u$ of
\eqref{G8}-\eqref{G9} satisfies 
\be \label{G12} ||u_0||^2_{H^1} \le \beta \int_0^T ||a u (t) ||^2_{H^1} dt. \ee
\end{proposition}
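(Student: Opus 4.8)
The plan is a compactness–uniqueness argument that combines the unique continuation property of Corollary~\ref{cor20} (to identify the limit of a minimizing-type sequence) with the exponential stability of the linearized closed-loop flow, Lemma~\ref{linear} (to reach a contradiction). First I would fix, once and for all, a time $T>2\pi/|c|$ large enough that $C_1\,e^{-\gamma T}<1$, where $\gamma$ and $C_1:=C_1(1)$ are the constants provided by Lemma~\ref{linear} for $s=1$. Suppose \eqref{G12} fails for every $\beta>0$: then for each $n\ge1$ there is $u_0^n\in H^1(\T)$ with $0<\|u_0^n\|_{H^1}\le R_0$ whose solution $u^n$ of \eqref{G8}-\eqref{G9} satisfies $\int_0^T\|a\,u^n(t)\|_{H^1}^2\,dt<\frac1n\|u_0^n\|_{H^1}^2$. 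Put $\mu_n:=\|u_0^n\|_{H^1}\in(0,R_0]$ and $v^n:=u^n/\mu_n$; after extracting a subsequence assume $\mu_n\to\mu\in[0,R_0]$. Then $v^n\in C([0,T];H^1(\T))$ solves
\[
v^n_t-v^n_{txx}-cv^n_{xxx}+(c+1)v^n_x+\mu_n v^n v^n_x=-a(1-\partial_x^2)(av^n),
\]
with $v^n(\cdot,0)=u_0^n/\mu_n$ of $H^1$-norm $1$ and $\int_0^T\|a v^n\|_{H^1}^2\,dt<1/n$. Since the nonlinear term contributes nothing to the $H^1$ energy, the $H^1$ energy identity reads $\|v^n(t)\|_{H^1}^2=1-2\int_0^t\|a v^n(\tau)\|_{H^1}^2\,d\tau$, so $\|v^n(t)\|_{H^1}\to1$ uniformly on $[0,T]$. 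In particular $(v^n)$ is bounded in $L^\infty(0,T;H^1(\T))$ and, reading $v^n_t$ off the equation, $(v^n_t)$ is bounded in $L^\infty(0,T;L^2(\T))$.

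By the Aubin–Lions lemma, some subsequence of $(v^n)$ converges strongly in $C([0,T];H^s(\T))$ for every $s<1$, and weakly-$*$ in $L^\infty(0,T;H^1(\T))$, to some $v\in L^\infty(0,T;H^1(\T))$. Fix $s\in(1/2,1)$. Passing to the limit in the equation — the term $\mu_n v^n v^n_x=\frac{\mu_n}{2}\partial_x((v^n)^2)$ passes because $H^s(\T)$ is an algebra, and $a(1-\partial_x^2)(av^n)\to a(1-\partial_x^2)(av)$ in $\mathcal D'$ — shows that $v$ solves $v_t-v_{txx}-cv_{xxx}+(c+1)v_x+\mu v v_x=-a(1-\partial_x^2)(av)$ in $\mathcal D'(\T\times(0,T))$. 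Moreover $\int_0^T\|av\|_{H^1}^2\,dt\le\liminf_n\int_0^T\|av^n\|_{H^1}^2\,dt=0$, so $av\equiv0$; hence the damping term drops out and $v$ solves $v_t-v_{txx}-cv_{xxx}+qv_x=0$ with the potential $q:=c+1+\mu v\in L^\infty(0,T;L^\infty(\T))$, while $v=0$ a.e.\ on $\omega\times(0,T)$. Since $T>2\pi/|c|$, Corollary~\ref{cor20} yields $v\equiv0$ on $\T\times(0,T)$, and therefore $v^n\to0$ in $C([0,T];H^s(\T))$.

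It remains to contradict $\|v^n(t)\|_{H^1}\to1$. Let $\widetilde v^n(t):=W_a(t)(u_0^n/\mu_n)$ be the solution of the linearized closed-loop equation. In Duhamel form,
\[
v^n(t)-\widetilde v^n(t)=-\frac{\mu_n}{2}\int_0^t W_a(t-\tau)(1-\partial_x^2)^{-1}\partial_x\big((v^n)^2\big)(\tau)\,d\tau ,
\]
and since $(1-\partial_x^2)^{-1}\partial_x\in\mathcal L(L^2(\T),H^1(\T))$, $\|(v^n)^2\|_{L^2}\le C\|v^n\|_{H^s}^2$, and $\|W_a(t)\|_{\mathcal L(H^1)}\le C_1$ on $[0,T]$, one gets $\|v^n-\widetilde v^n\|_{C([0,T];H^1)}\le C_T\,\mu_n\,\|v^n\|_{C([0,T];H^s)}^2\to0$. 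Consequently $\|\widetilde v^n(T)\|_{H^1}\ge\|v^n(T)\|_{H^1}-\|v^n(T)-\widetilde v^n(T)\|_{H^1}\to1$, whereas Lemma~\ref{linear} gives $\|\widetilde v^n(T)\|_{H^1}\le C_1 e^{-\gamma T}\|u_0^n/\mu_n\|_{H^1}=C_1 e^{-\gamma T}<1$; this is the desired contradiction, and it proves \eqref{G12} (with $T$ as chosen and some $\beta=\beta(R_0)$). The main obstacle is that strong $H^1$-compactness of $(v^n)$ is not available — high-frequency concentration is not excluded a priori — so the weak $H^1$ limit cannot be handled directly; the resolution is that the UCP already forces $v^n\to0$ in $H^s$ for every $s<1$, which is exactly what is needed both to make the nonlinearity negligible in $H^1$ and to reduce the problem to the exponentially stable linear flow.
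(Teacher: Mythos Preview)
Your argument is correct and follows essentially the same compactness--uniqueness strategy as the paper: contradiction, normalization, Aubin--Lions compactness, identification of the weak limit as zero via Corollary~\ref{cor20}, and comparison with the linearized flow $W_a$. The only cosmetic difference is in the final step: the paper converts the decay from Lemma~\ref{linear} into a linear observability inequality $\|w_n(0)\|_{H^1}^2\le C\int_0^T\|aw_n\|_{H^1}^2\,dt$ and shows the right-hand side tends to zero, whereas you directly compare $\|\widetilde v^n(T)\|_{H^1}\to1$ with the strict bound $C_1e^{-\gamma T}<1$; both reach the same contradiction.
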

Indeed, if \eqref{G12} holds, then it follows from the energy identity 
\be ||u(t)||^2_{H^1} =||u_0||^2_{H^1} 
-2 \int_0^t ||au (\tau ) ||^2_{H^1} d\tau \qquad \forall t \ge 0 \label{G13} \ee
that
\[
||u(T)||^2_{H^1} \le (1- 2\beta ^{-1}) ||u_0||^2_{H^1}.
\]
Thus
\[
||u(mT)||^2_{H^1} \le (1-2 \beta ^{-1})^{m} ||u_0||^2_{H^1}
\]
which gives by the semigroup property
\be ||u(t)||_{H^1} \le Ce^{-\kappa t} ||u_0||_{H^1}\qquad \text{ for all } t\ge 0, \ee
for some positive constants $C=C(R_0)$, $\kappa =\kappa (R_0)$. 

Finally, we can replace $\kappa $ by the $\gamma $ given in Lemma
\ref{linear}. Indeed, let $t'=\kappa ^{-1} \log [1+CR_0\delta ^{-1}]$,  where $\delta $ is as given in
Proposition \ref{locale}. Then
for $\| u_0\| _{H^1}\le R_0$, $\|u( t')\|_{H^1} < \delta$, hence  for all $t\ge t'$ 
\[ \|u (t)\|_{H^1} 
\leq C'_1\|u(t')\|_{H^1} e^{-\gamma (t-t')}
\leq (C'_1 \delta/R_0) \| u_0 \|_{H^1} e^{-\gamma (t-t')}
\leq C^* e^{-\gamma t} \| u_0\| _{H^1}
\] 
where $C^*= (C'_1\delta / R_0) e^{\gamma t'}$.
\qed

\medskip
Now we present a proof of Proposition \ref{prop10}. Pick for the
moment any $T>2\pi / |c|$ (its value will be specified later on). We prove
the estimate \eqref{G12} by contradiction. If \eqref{G12} is not
true, then for any $n\ge 1$ \eqref{G8}-\eqref{G9} admits a solution
$u_n\in C([0,T];H^1(\T ))$ satisfying 
\be \label{G14} ||u_n(0) ||_{H^1}
\le R_0 \ee and \be \int_0^T ||au_n(t)||^2_{H^1} dt <\frac{1}{n}
||u_{0,n}||^2_{H^1} \label{G15} 
\ee 
where $u_{0,n}=u_n(0)$. Since
$\alpha _n := ||u_{0,n}||_{H^1} \le R_0$, one can choose a subsequence
of $(\alpha _n)$, still denoted by $(\alpha _n)$, such that
$\lim_{n\to \infty}\alpha _n = \alpha$. Note that $\alpha _n>0$ for
all $n$, by \eqref{G15}. Set $v_n=u_n/\alpha _n$ for all $n\ge 1$.
Then 
\be v_{n,t} - v_{n, txx} -c v_{n,xxx} + (c+1)v_{n,x} + \alpha _n v_n v_{n,x} =-a(1-\partial _x^2)[av_n] \label{G16} 
\ee 
and
\be
\label{G17} \int_0^T ||a v_n ||^2_{H^1} dt <\frac{1}{n}\cdot 
\ee
Because of \be ||v_n(0) ||_{H^1} =1, \label{G18} \ee the sequence
$(v_n)$ is bounded in $L^\infty (0,T; H^1(\T ))$, while $(v_{n,t})$
is bounded in $L^\infty (0,T;L^2(\T))$. From Aubin-Lions' lemma and
a diagonal process, we infer that we can extract a subsequence of $(v_n)$, still
denoted $(v_n)$, such that 
\ba
v_n\to v \quad &&\text{ in } C([0,T]; H^s(\T )) \quad \forall s<1 \label{G19}\\
v_n\to v \quad  &&\text{ in } L^\infty(0,T;H^1(\T ))\quad \text{weak}* \label{G20} 
\ea 
for some $v\in L^\infty (0,T;H^1(\T ))\cap C([0,T];H^s(\T ))$ for all $s<1$, Note that, by
\eqref{G19}-\eqref{G20}, we have that 
\be \alpha _n v_n\, v_{n,x} \to
\alpha v v_x \qquad \text{ in } L^\infty (0,T; L^2(\T ))\ \text{weak}*. 
\ee 
Furthermore, by \eqref{G17}, \be \int_0^T ||av ||^2_{H^1} dt
\le \liminf_{n\to\infty} \int_0^T ||av_n||^2_{H^1} dt =0. \label{D20bis}
\ee 
Thus, $v$ solves 
\ba
&&v_t -v_{txx} -cv_{xxx} + (c+1)v_x  +  \alpha vv_x = 0 \quad \text{ on } \T \times (0,T),\\
&&v=0 \quad \text{ on } \omega \times (0,T).
\ea
where $\omega $ is given in \eqref{D3}.
According to Corollary \ref{cor20}, $v\equiv 0$ on $\T \times (0,T)$.

We claim that $(v_n)$ is {\em linearizable} in the sense of  \cite{DGL}; that is,
if $(w_n)$ denotes the sequence of solutions to the linear
KdV-BBM equation  with the same initial data
\ba
&&w_{n,t} - w_{n,txx} -cw_{n,xxx} + (c+1) w_{n,x} =-a(1-\partial _x^2)[aw_n], \label{G21}\\
&&w_n(x,0) =v_n(x,0), \label{G22} 
\ea 
then 
\be \sup_{0\le t\le T} ||v_n(t) - w_n(t)||_{H^1} \to 0 \quad \text{ as } n\to \infty. \label{G23} \ee 
Indeed, if $d_n=v_n-w_n$, then $d_n$ solves
\begin{eqnarray*}
&&d_{n,t} -d_{n,txx} -cd_{n,xxx} +(c+1)d_{n,x} = -a(1-\partial _x^2)[ad_n] -\alpha _n v_n v_{n,x},\\
&&d_n(0)=0.
\end{eqnarray*}
Since $||W_a(t)||_{ {\mathcal L} (H^1(\T )) }\le 1$, we have from Duhamel formula that for $t\in [0,T]$
\[
||d_n(t)|| _{H^1} \le \int_0^T ||(1-\partial _x ^2)^{-1} (\alpha _n
v_nv_{n,x}) (\tau )||_{H^1} d\tau .
\]
Combined to \eqref{G19} and to the fact that $v\equiv 0$, this gives
\eqref{G23}. By Lemma \ref{linear}, we have that
\be \label{G24}
|| w_n(t) ||_{H^1} \le C_1 e^{- \gamma t} ||w_n(0)||_{H^1} \qquad \text{for all } t\ge 0. \ee 
From
\eqref{G24} and the energy identity for \eqref{G21}-\eqref{G22},
namely 
\be ||w_n(t)||^2_{H^1} - ||w_n(0)||^2_{H^1} 
=-2 \int_0^t ||aw_n(\tau )||^2_{H^1} d\tau, \ee 
we have for $Ce^{- \lambda T} <1$ 
\be
||w_n(0)||^2_{H^1} \le 2 (1- C_1^2 e^{-2 \gamma  T})^{-1} \int_0^T ||a
w_n(\tau )||^2_{H^1} d\tau. \label{G25} 
\ee 
Combined to \eqref{G17} and
\eqref{G23}, this yields $||v_n(0)||_{H^1}=||w_n(0)||_{H^1}\to 0$, which
contradicts \eqref{G18}. This completes the proof of Proposition
\ref{prop10} and of Theorem \ref{thm10}.\qed

\medskip
Next we show that the system \eqref{G8}-\eqref{G10} is exponentially stable in the space
$H^s (\T)$ for any $s\geq 1$.
\begin{theorem}
Let $a\in C^\infty (\T  )$ with $a\ne 0$  and $\gamma
>0$ be as given in Lemma \ref{linear}. For  any given $s\geq 1$
and $R_0>0$, there exists a constant $C>0$ depending only on $s$ and
$R_0$  such that for any $u_0\in H^s(\T)$ with $||u_0||_{H^s}\le R_0$,
the corresponding solution $u$ of \eqref{G8}-\eqref{G9} satisfies
\be 
\|u(\cdot ,t)\|_{H^s} \le Ce^{-\gamma t} \|u_0\|_{H^s}
\qquad \text{ for
all } t\ge 0.  \label{g-2} \ee
\end{theorem}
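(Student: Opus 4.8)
The plan is to bootstrap the $H^1$ exponential decay already established in Theorem \ref{thm10} up to arbitrary Sobolev regularity, using the Duhamel formula for \eqref{G8}-\eqref{G9}, the linear decay estimate of Lemma \ref{linear}, the bilinear estimate of Lemma \ref{bilinear}, and Gronwall's inequality. Global existence of the solution in $C([0,\infty);H^s(\T))$ for $s\ge1$ is guaranteed by Theorem \ref{global}, so everything below is carried out for the genuine solution $u$, which satisfies
\[
u(t)=W_a(t)u_0-\frac12\int_0^tW_a(t-\tau)(1-\partial_x^2)^{-1}\partial_x(u^2)(\tau)\,d\tau .
\]

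First I would set up a finite induction on the exponents $s_k:=1+k/8$, $k\ge0$. The base case $s_0=1$ is exactly Theorem \ref{thm10}: for $\|u_0\|_{H^1}\le R_0$ one has $\|u(t)\|_{H^1}\le C^*(R_0)\,e^{-\gamma t}\|u_0\|_{H^1}$. For the inductive step, assume the bound $\|u(t)\|_{H^r}\le C_{r,R_0}\,e^{-\gamma t}\|u_0\|_{H^r}$ is known for all $1\le r\le s_{k-1}$ and all data with $\|u_0\|_{H^r}\le R_0$. Fix $s\in(s_{k-1},s_k]$ and $u_0\in H^s(\T)$ with $\|u_0\|_{H^s}\le R_0$ (so also $\|u_0\|_{H^{s_{k-1}}}\le R_0$). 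Applying Lemma \ref{linear} and Lemma \ref{bilinear} — the latter with the pair of exponents $(r,r')=(s_{k-1},s)$, which is legitimate since $0\le 2s-s_{k-1}-s=s-s_{k-1}\le\frac18<\frac14$ — to the above integral equation gives
\[
\|u(t)\|_{H^s}\le C_s\,e^{-\gamma t}\|u_0\|_{H^s}+\frac{C_sc_{s_{k-1},s}}{2}\int_0^te^{-\gamma(t-\tau)}\|u(\tau)\|_{H^{s_{k-1}}}\|u(\tau)\|_{H^s}\,d\tau .
\]
Inserting the inductive bound $\|u(\tau)\|_{H^{s_{k-1}}}\le C_{s_{k-1},R_0}R_0\,e^{-\gamma\tau}$ and setting $f(t):=e^{\gamma t}\|u(t)\|_{H^s}$, we obtain $f(t)\le C_s\|u_0\|_{H^s}+\kappa\int_0^te^{-\gamma\tau}f(\tau)\,d\tau$ with $\kappa=\kappa(s,R_0)$. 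Gronwall's lemma then yields $f(t)\le C_s\|u_0\|_{H^s}\exp(\kappa/\gamma)$, i.e. $\|u(t)\|_{H^s}\le C_{s,R_0}\,e^{-\gamma t}\|u_0\|_{H^s}$. Since every $s\ge1$ satisfies $s\le s_k$ for $k$ large enough, this proves \eqref{g-2}.

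The only point requiring care — and the reason the bootstrap closes with the \emph{same} rate $\gamma$ rather than a worse one — is that the low-regularity factor $\|u(\tau)\|_{H^{s_{k-1}}}$ in the nonlinear term decays like $e^{-\gamma\tau}$; this makes the kernel $e^{-\gamma(t-\tau)}\|u(\tau)\|_{H^{s_{k-1}}}$ integrable in $\tau$ uniformly in $t$, so that Gronwall produces a prefactor bounded in $t$. The dependence of all constants on $R_0$ merely accumulates through the finitely many induction steps and stays finite, so no genuine obstacle arises beyond this routine bootstrap. (Alternatively, one could first invoke Theorem \ref{thm10} to reach a time at which the $H^1$ norm falls below the threshold $\delta$ of Proposition \ref{locale} and then apply that proposition, but this still requires controlling the $H^s$ norm on the initial time interval via the a priori estimate \eqref{priori}, so the direct bootstrap above is the cleaner route.)
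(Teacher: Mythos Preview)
Your argument is correct and follows essentially the same bootstrap strategy as the paper: Duhamel formula for \eqref{G8}--\eqref{G9}, Lemma~\ref{linear} for the linear part, Lemma~\ref{bilinear} for the nonlinearity, and induction on a ladder of Sobolev exponents starting from the $H^1$ decay of Theorem~\ref{thm10}. The only (inessential) difference is the choice of exponents in the bilinear estimate: the paper takes both factors at the lower regularity, i.e.\ $(r,r')=(m_{k-1},m_{k-1})$ with step $1/10$, so that the nonlinear integral is bounded directly by $\int_0^t e^{-\gamma(t-\tau)}\|u(\tau)\|_{H^{m_{k-1}}}^2\,d\tau\le C e^{-\gamma t}\|u_0\|_{H^{m_{k-1}}}^2$ without any Gronwall step, whereas you take $(r,r')=(s_{k-1},s)$ and close via Gronwall; both routes give the same rate~$\gamma$.
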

\begin{proof}
As before, rewrite the system in its integral form
\[ u(t)=W_a(t)u_0 -\frac12 \int ^t_0 W_a(t-\tau )( 1 -\partial
_x^2 )^{-1} (uu_x) (\tau) d\tau .\]  
For $u_0\in H^s (\T)$
with $\|u_0\|_{H^s} \leq R_0$, applying  Lemma \ref{linear}, 
Lemma \ref{bilinear} and Theorem \ref{thm10} yields that, for any 
$1\leq s\leq 1+\frac{1}{10}$,
\begin{eqnarray*}
 \| u(\cdot, t)\|_{H^s} 
 &\leq & C_se^{-\gamma t} \|u_0\|_{H^s} 
 + \frac{C_s c_{1,1,s}}{2}  \int^t_0 e^{-\gamma (t-\tau) }\| u(\cdot, \tau)\|_{H^1}^2 d\tau\\ 
 &\leq & C_se^{-\gamma t} \|u_0\|_{H^s} + \frac{C_s c_{1,1,s} (C^*)^2 }{2} \int ^t_0 e^{-\gamma (t-\tau) }
 e^{-2\gamma \tau }\| u_0\|_{H^1}^2 d\tau \\ 
&\leq & \left (C_s+\frac{C_s c_{1,1,s} (C^*)^2}{2\gamma }  \|u_0\|_{H^1}  \right ) e^{-\gamma t} \|u_0 \|_{H^s}
\end{eqnarray*}
for any $t\geq 0$. Thus the estimate (\ref{g-2}) holds for $1\leq s\leq
m_1:= 1+\frac{1}{10}$.  Similarly, for $m_1\leq s\leq m_2:=
1+\frac{2}{10}$, we have for $||u_0||_{H^s}\le R_0$
\begin{eqnarray*}
 \| u(\cdot, t)\|_{H^s} 
 &\leq & C_se^{-\gamma t} \|u_0\|_{H^s} + \frac{C_sc_{m_1,m_1,s}}{2} 
 \int^t_0 e^{-\gamma (t-\tau) }\| u(\cdot, \tau)\|_{H^{m_1}}^2 d\tau\\ 
&\leq & C_se^{-\gamma t} \|u_0\|_{H^s} + C(s,m_1,R_0) \int ^t_0 e^{-\gamma
(t-\tau) }e^{-2\gamma \tau }\| u_0\|_{H^{m_1}}^2 d\tau \\ 
&\leq & \left( C_s+C(s,m_1,R_0) ||u_0|| _{H^{m_1}} \gamma ^{-1} \right ) e^{-\gamma t}
\|u_0 \|_{H^s}.
\end{eqnarray*}
Thus the estimate (\ref{g-2}) holds for $1\leq s\leq m_2:=
1+\frac{2}{10}$. Repeating this argument  yields that the estimate
(\ref{g-2}) holds for $1\leq s\leq m_k:=1+\frac{k}{10}$ for
$k=1,2,\ldots$
\end{proof}

\section*{Acknowledgements}
The authors would like to thank E. Zuazua for having brought to
their attention the reference \cite{AT}. LR was  partially supported
by the Agence Nationale de la Recherche, Project CISIFS, grant
ANR-09-BLAN-0213-02.  BZ  was partially supported by a grant from
the Simons Foundation (\#201615 to Bingyu Zhang)

\end{document}